\pdfoutput=1
\documentclass[a4paper, 11pt]{style/preprint}
\usepackage[T1]{fontenc}
\usepackage[utf8]{inputenc}
\usepackage[left=1.5in, right=1.5in, bottom=1.5in]{geometry}
\usepackage{style/comment}
\usepackage{style/mhenvs}
\usepackage{style/mhsymb}
\usepackage{style/trees}

\usepackage[full]{textcomp}
\usepackage[osf]{newtxtext}
\usepackage{amssymb}
\usepackage{amsmath}
\usepackage{mathtools}
\usepackage{mhequ}
\usepackage{booktabs}
\usepackage{tikz}
\usepackage{mathrsfs}
\usepackage{longtable}
\usepackage{microtype}
\usepackage{wasysym}
\usepackage{centernot}
\usepackage{enumitem}
\usepackage{orcidlink}
\usepackage[dvipsnames]{xcolor}
\usepackage{hyperref}
\usepackage{crossreftools}
\usepackage{fontawesome5}
\usepackage{physics}
\usepackage{shuffle}

\DeclareMathOperator{\mul}{mult}
\DeclareMathOperator{\Hom}{Hom}
\DeclareMathOperator{\Osc}{Osc}

\mathtoolsset{showonlyrefs=true}

\newif\ifdark
\IfFileExists{dark}{\darktrue}{\darkfalse}

\ifdark

\definecolor{darkred}{rgb}{0.9,0.2,0.2}
\definecolor{darkblue}{rgb}{0.7,0.3,1}
\definecolor{darkgreen}{rgb}{0.1,0.9,0.1}
\definecolor{pagebackground}{rgb}{.15,.21,.18}
\definecolor{pageforeground}{rgb}{.84,.84,.85}
\pagecolor{pagebackground}
\AtBeginDocument{\globalcolor{pageforeground}}

\else

\definecolor{darkred}{rgb}{0.7,0.1,0.1}
\definecolor{darkblue}{rgb}{0.4,0.1,0.8}
\definecolor{darkgreen}{rgb}{0.1,0.7,0.1}
\definecolor{pagebackground}{rgb}{1,1,1}
\definecolor{pageforeground}{rgb}{0,0,0}

\fi

\definecolor{newred}{RGB}{208,16,76}
\definecolor{newgreen}{RGB}{34,125,81}

\makeatletter
\newcommand{\globalcolor}[1]{%
	\color{#1}\global\let\default@color\current@color
}
\makeatother

\DeclareSymbolFont{timesoperators}{T1}{ptm}{m}{n}
\SetSymbolFont{timesoperators}{bold}{T1}{ptm}{b}{n}

\makeatletter
\renewcommand{\operator@font}{\mathgroup\symtimesoperators}
\makeatother

\DeclareMathAlphabet{\mathbbm}{U}{bbm}{m}{n}
\DeclareFontFamily{U}{BOONDOX-calo}{\skewchar\font=45 }
\DeclareFontShape{U}{BOONDOX-calo}{m}{n}{
	<-> s*[1.05] BOONDOX-r-calo}{}
\DeclareFontShape{U}{BOONDOX-calo}{b}{n}{
	<-> s*[1.05] BOONDOX-b-calo}{}
\DeclareMathAlphabet{\mcb}{U}{BOONDOX-calo}{m}{n}
\SetMathAlphabet{\mcb}{bold}{U}{BOONDOX-calo}{b}{n}

\makeatletter 

\newcommand*{\fat}{}
\DeclareRobustCommand*{\fat}{%
	\mathbin{\mathpalette\bigcdot@{}}}
\newcommand*{\bigcdot@scalefactor}{.5}
\newcommand*{\bigcdot@widthfactor}{1.15}
\newcommand*{\bigcdot@}[2]{%
	\sbox0{$#1\vcenter{}$}
	\sbox2{$#1\cdot\m@th$}%
	\hbox to \bigcdot@widthfactor\wd2{%
		\hfil
		\raise\ht0\hbox{%
			\scalebox{\bigcdot@scalefactor}{%
				\lower\ht0\hbox{$#1\bullet\m@th$}%
			}%
		}%
		\hfil
	}%
}

\DeclareRobustCommand{\TitleEquation}[2]{\texorpdfstring{\StrLeft{\f@series}{1}[\@firstchar]$\if b\@firstchar\boldsymbol{#1}\else#1\fi$}{#2}}

\makeatother

\theoremnumbering{Alph}
\renewtheorem{theorem*}{Theorem}
\newtheorem{assumption}[lemma]{Assumption}
\newtheorem{example}[lemma]{Example}
\numberwithin{equation}{section}

\def\slash{\leavevmode\unskip\kern0.18em/\penalty\exhyphenpenalty\kern0.18em}
\def\dash{\leavevmode\unskip\kern0.18em--\penalty\exhyphenpenalty\kern0.18em}

\let\epsilon\varepsilon

\def\${|\!|\!|}

\setlist{noitemsep,topsep=4pt}
\def\para_#1{/\!\!/_{\!#1}}

\newcommand\blfootnote[1]{%
  \begingroup
  \renewcommand\thefootnote{}\footnote{#1}%
  \addtocounter{footnote}{-1}%
  \endgroup
}

\hfuzz=\maxdimen
\vfuzz=\maxdimen
\begin{document}

\title{Non-explosion principles for branched rough differential equations with unbounded coefficients}
\author{Kexing Ying \orcidlink{0000-0002-8292-4746}}
\institute{Institute of Mathematics, EPFL, Switzerland}

\maketitle

\begin{abstract}
  Expanding upon the work of \cite{Li:25}, we provide a non-explosion criterion for branched 
  rough differential equations (bRDEs) where we continue to allow for the drift and the rough 
  coefficient to be unbounded in the branched setting. Moreover, by providing a characterization of
  ``pure area'' branched rough paths -- branched rough paths over the zero path, we provide two 
  different constructions of bRDEs which explode in finite time, and thus demonstrating the 
  sharpness of the criterion. Finally, by realizing a trade-off between the growth of the 
  coefficient of the bRDE and the decay of its higher-order derivatives, we provide a new non-explosion 
  principle for bRDEs which allows for the coefficient to grow even faster than what is provided 
  in \cite{Li:25}.
\end{abstract}

\blfootnote{The author would like to thank Xue-Mei Li for their supervision and helpful discussions. 
  The author acknowledges support from the Swiss National Science Foundation project MINT (10000849)
  and from NCCR SwissMAP.}

\setcounter{tocdepth}{2}
\tableofcontents

\noindent

\section{Introduction}

The theory of rough paths, introduced in the seminal works of Lyons \cite{Lyons:94, Lyons:98}, provides a path-wise framework for
differential equations driven by signals that are too irregular for classical calculus 
(say having H\"older regularity \(\gamma < \frac{1}{2}\)). 
It restores the convergence of the Riemann sum by enriching the driving signal with 
an extra component corresponding to the second-order iterated integral. 
This viewpoint was demonstrated to be an especially powerful tool in stochastic analysis as it 
provides a deterministic formulation of equations driven by stochastic processes, 
e.g. (fractional) Brownian motions. Moreover, it restores the continuity of the
solution map with respect to the driving signal and the component corresponding to the iterated integral. 
We note the standard references on rough path theory \cite{Friz:10} and \cite{Friz:20} while we also 
remark on some of its successful applications in multiscale systems (e.g. \cite{Chevyrev:19, Hairer:22, Gehringer:21, Li:25b}),
(S)PDEs (e.g. \cite{Diehl:15, Deya:19, Hofmanova:19, Li:26}), and in the context of machine learning 
(e.g. \cite{Chevyrev:26, Kidger:19, Morrill:21}) just to name a few.

For paths with H\"older regularity \(\gamma < \frac{1}{3}\), extra care is needed and additional components are required to ensure the 
convergence of the Riemann sum. In this case, the rough paths most commonly considered are the so-called 
geometric rough paths, which are namely the rough paths whose integration theory
satisfies the usual chain rule and are enriched with components corresponding to the 
higher-order iterated integrals. These rough paths arise naturally as limits of
smooth paths and are well-suited for many applications. Nevertheless, e.g. in the context of 
finance, where It\^o integration is preferred over Stratonovich for the payoff of a continuous-time strategy,
as the latter can be heuristically described as ``seeing into the future'', 
the geometric rough path formalism is not necessarily always desirable. The search for the correct notion 
of rough paths in this setting led to the introduction of branched rough paths by Gubinelli in 
\cite{Gubinelli:04, Gubinelli:06} (see also \cite{Cass:16} in the infinite-dimensional case).
We also note the articles \cite{Tapia:20, Varzaneh:25} which contain a nice overview of branched rough paths.
In this case, the components enriching the path no longer just correspond to the (linear) iterated integrals, 
but also integrals of the products (of the integral of products, and so on). This is in contrast to the geometric case, 
in which these components are simply reduced to the iterated integrals via the shuffle relations. Instead, in the 
absence of the chain rule, these data are naturally encoded by the Hopf algebra of rooted trees 
(namely the Connes-Kreimer Hopf algebra, cf. \cite{Hoffman:03, Oudom:05}). 

We in this article study non-explosion for rough differential equations (RDEs) driven by these branched 
rough paths. More precisely, we consider branched rough differential equations (bRDEs) of the form
\begin{equation}\label{eq:bRDE-intro}
  \dd \mathbf{Y}_t = b(\mathbf{Y}_t) \, \dd t + \phi(\mathbf{Y}_t) \, \dd \mathbf{X}_t,
\end{equation}
where \(\mathbf{X}\) is a branched rough path and the coefficients are allowed to be unbounded. 
For geometric RDEs, global existence of RDEs with unbounded coefficients 
has been studied by many authors, including but not limited to the works of 
\cite{Davie:07, Bailleul:12, Driver:18, Bailleul-Catellier} and also \cite{Riedel:16} in which the latter established 
non-explosion of \eqref{eq:bRDE-intro} assuming \(b\) having linear growth and \(\phi\) bounded. 
For RDEs driven by non-geometric rough paths in the \(\gamma \in \left(\frac{1}{3}, \frac{1}{2}\right]\) regime, 
we also note the works \cite{Lejay:09, Lejay:12} which showed non-explosion of the RDE while assuming 
that the coefficient has bounded derivative. We also mention the works of \cite{Fang:07, Scheutzow:17, Anzeletti:23,Anzeletti:24} 
regarding strong completeness and path-by-path 
well-posedness of SDEs which are closely related to the non-explosion problem for RDEs when the driving signal is the It\^o rough path lift 
of a Brownian motion. In the context of bRDEs, we note \cite{Bonnefoi:22} in which the authors 
studied the ``coming down from infinity'' property of a bRDE with a cubic drift 
(albeit in this case, the drift has a damping effect rather than a growth effect). 

The present work is mainly motivated by the work of Li with the author \cite{Li:25} and extends 
the non-explosion principles presented there to the branched setting. In particular, we provide a
non-explosion criterion for bRDEs with drift for which we observe that a trade-off can be made 
between the growth of the drift coefficient \(b\) and the growth of the rough coefficient \(\phi\).
Moreover, we obtain a new non-explosion principle for bRDEs by observing that a trade-off can be 
made between the growth of the coefficient \(\phi\) and the decay of its higher-order derivatives. 
Finally, by providing a characterization of branched rough paths over the zero path, we provide 
two different constructions of bRDEs which explode in finite time, demonstrating the sharpness of the non-explosion 
criterion.

We remark that the non-explosion criterion we provide is independent of the choice of the rough
path lift. Thus, by transforming the higher order components of the branched rough path appropriately, one 
might achieve a better growth condition than what is provided directly by the criterion. The example of comparing  
the Stratonovich lift against the It\^o lift of a Brownian motion already illustrates this as one 
version could have a better growth condition than the other. Namely, the correction term could 
provide cancellation or could worsen the growth condition. Consequently, while one can generally 
write a bRDE as a geometric RDE on an extended space via the Hairer-Kelly embedding \cite{Hairer:14}
(cf. also \cite{Boedihardjo:19}),
the criterion one obtains after applying this embedding is most certainly not optimal.  
More commentary on this phenomenon can also be observed in the examples of Section~\ref{sec:sharpness}.

Finally, we also note that working in the branched setting is also motivated by the similarities it bears 
to analogous controlled-type descriptions used in the theory of singular SPDEs \cite{Hairer:14b}. 
In those theories, the same Hopf algebra structure appears as a consequence of 
the Picard iterations of Duhamel's formula. We note the article \cite{Bruned:19} 
which provides a detailed analysis of the algebraic structure of branched rough paths and Hairer's 
theory of regularity structures. One example of interest is the recent work \cite{Gerencser:26}, 
in which the authors discussed ``a pure area model'', i.e. a non-trivial lift over the trivial noise. 
This is the analogue of the pure area rough path in the context of regularity structures and so, 
one reasonably expects that the discussion in Section~\ref{subsec:pure-area} on pure area branched rough paths 
could be extended to this context. In this light, one may hope that the technique developed 
here could be furthered to the setting of singular SPDEs, in which the non-explosion problem is of 
even greater interest.

\subsection{Strategy for non-explosion}
We outline the strategy used to show non-explosion of an RDE. 
For simplicity, in this section we assume that we work at level \(N = 2\) and that the 
rough differential equation has no drift, i.e. we consider \(Y : [0, \xi) \to \mathbb{R}^m\) the 
maximal solution to the RDE
\begin{equation}\label{eq:rde-strat}
  \dd Y_t = \phi(Y_t) \dd \mathbf{X}_t, 
\end{equation}
where \(\mathbf{X} \in \CC^\gamma([0, T], \mathbb{R}^d)\) is a \(\gamma\)-H\"older rough path with 
\(\gamma \in (\frac{1}{3}, \frac{1}{2}]\) and \(\phi : \mathbb{R}^m \to \CL(\mathbb{R}^d, \mathbb{R}^m)\).

The proof strategy for showing non-explosion of \((Y_t)\) essentially boils down to the following simple 
observation. Suppose \((r_n)_{n \in \mathbb{N}} \subseteq \mathbb{R}_+\) is a non-decreasing sequence such that 
\(r_n \to \infty\) as \(n \to \infty\). We keep track of the location of \(Y\) by introducing the times
\(\tau^{r_n} = \tau^n = \inf\{t : \|Y_t\| \ge r_n\}\) and \(\sigma^{r_n} = \sigma^n = \sup\{t < \tau^{n + 1} : \|Y_t\| \le r_n\}\) 
so that \(\|Y_t\| \in [r_n, r_{n + 1}]\) for all \(t \in [\sigma^n, \tau^{n + 1})\). Then, for 
any \(t > 0\), if 
\begin{equation}\label{eq:tau-ineq}
  \|Y_{(\sigma^n + t) \wedge \tau^{n + 1}} - Y_{\sigma^n}\| < r_{n + 1} - r_n
\end{equation}
for some \(n \in \mathbb{N}\), we have that \(t < \tau^{n + 1} - \sigma^n\). 
Consequently, if we can find a sequence \((t_n)_{n \in \mathbb{N}}\) such that for each \(n \in \mathbb{N}\),
Equation~\eqref{eq:tau-ineq} holds for \(t = t_n\), then 
\[\xi = \lim_{n \to \infty} \tau^{n} \ge \sum_{n = N}^\infty (\tau^{{n + 1}} - \tau^{n}) \ge 
  \sum_{n = N}^\infty (\tau^{n + 1} - \sigma^{n}) > \sum_{n = N}^\infty t_n.\]
Thus, we may conclude that the RDE has non-explosion if \((t_n)\) can be chosen to be non-summable.

A natural contender for the contour sequence \((r_n)\) is to take \(r_n = Rn\) for some constant 
\(R > 0\). In this case, denoting the integral \(\int_0^t \phi(Y_s) \dd \mathbf{X}_s =: Z_t\), we may observe
the simple estimate
\begin{equation}\label{eq:del-Y-est}
  \begin{split}
    \|Y_{(\sigma^{n} + t) \wedge \tau^{n + 1}} - Y_{\sigma^{n}}\| 
    & = \|Z_{(\sigma^n + t) \wedge \tau^{n + 1}} - Z_{\sigma^{n}}\|\\
    & \le \|R^Z_{\sigma^{n}, (\sigma^{n} + t) \wedge \tau^{n + 1}}\| 
      + \|\phi(Y_{\sigma^{n}})\| \|X\|_{\gamma} t^{\gamma}
  \end{split}
\end{equation}
with \(R^Z_{s, t} := Z_t - Z_s - \phi(Y_s) (X_t - X_s)\), i.e. it is the remainder term in the 
construction of the rough integral in the context of controlled rough paths.
Thus, assuming a sub-linear growth condition on \(\phi\) of the form 
\(\|\phi(y)\| \lesssim \|y\|^{\gamma}\), by taking \(t_n = \epsilon n^{-1}\) for some 
small \(\epsilon\) independent of \(n\), the term \(\|\phi(Y_{\sigma^{n}})\| \|X\|_{\gamma} t^{\gamma}\) 
can be made uniformly (in \(n\)) arbitrarily small. On the other hand, under certain conditions, it can be shown 
that the H\"older norm of the remainder \(R^Z\) can be controlled on sufficiently small intervals 
(cf. Corollary~\ref{cor:h-tilde-est}). To be more precise, for \(t < r_n^{-1}\), we have the estimate
\[\|R^Z_{\sigma^n, (\sigma^n + t) \wedge \tau^{n + 1}}\| 
  \le \|R^{Z}\|_{2\gamma; I} t^{2\gamma} 
  \lesssim (R(n + 1))^{2\gamma} t^{2\gamma}\]
where \(I := [\sigma^n, (\sigma^n + t) \wedge \tau^{n + 1}]\).
Hence, for \(t_n\) chosen above (shrinking \(\epsilon\) if necessary),
the remainder term can also be made uniformly arbitrarily small. Thus, choosing \(\epsilon\) so that 
both terms are strictly less than \(\frac{1}{2} R\) provides the desired estimate. 

In the presence of a drift, the above argument can be adapted (cf. \cite[Lemma 3.5 and Theorem 4.12]{Li:25}) 
and a similar conclusion holds for the solution of the RDE under the same growth conditions on \(\phi\).
To be \textit{slightly} more precise, by an analogous argument, one can control the growth of the 
solution of a perturbed ODE of the form
\[\dd Y_t = b(Y_t) \dd t + \dd \gamma_t\]
provided one has sufficient control over the perturbation \(\gamma\). With this, we can bootstrap 
the previous control by considering instead the system 
\[\begin{cases}
  \dd Y_t & = b(Y_t) \dd t + \dd \eta_t\\
  \dd \eta_t & = \phi(Y_t) \dd \mathbf{X}_t
  \end{cases}.\]

We in this article generalize this result to the setting of branched RDEs resulting in 
Theorem~\ref{thm:non-explosion}. For this, we provide an a priori estimates on the higher-order 
Gubinelli derivatives of the solution in Section~\ref{sec:apriori} from which we deduce the aforementioned 
estimates on the remainder term \(R^Z\) in this lower regularity setting (Corollary~\ref{cor:h-tilde-est}). 
Moreover, in proving this remainder estimate, one quickly realizes that 
the bound remains to hold while allowing \(\phi\) to have faster growth should its derivatives 
have sufficient decay. This naturally leads to the question of whether one can still show a similar 
non-explosion criterion when \(\phi\) has (almost) linear growth. To achieve this, we need to adapt the above argument.
Indeed, while the \(R^Z\) term satisfies a similar estimate, the term 
\(\|\phi(Y_{\sigma^n})\| \|X\|_{\gamma} t^{\gamma} \sim n t^\gamma\) in~\eqref{eq:del-Y-est} can no 
longer be uniformly bounded without making \((t_n)\) summable. 
Instead, we will leverage a different set of contours \((r_n)\) which 
grows exponentially. To be more precise, taking \(r_n \sim 2^n\), we have that 
\(r_{n + 1} - r_n \sim 2^n\) and thus, the desired estimate is of the form
\begin{equation}\label{eq:del-Y-est2}
  \|Y_{(\tau^{{n}} + t) \wedge \tau^{{n + 1}}} - Y_{\tau^{n}}\| < \epsilon 2^n
\end{equation}
for some small \(\epsilon\) independent of \(n\). Namely, for this choice of \((r_n)\), 
the term \(\|\phi(Y_{\tau^{n}})\| \|X\|_{\gamma} t^{\gamma} \sim 2^n t^\gamma\) in \eqref{eq:del-Y-est2} is no longer an issue as 
one can simply take \(t_n \sim \epsilon^{\frac{1}{\gamma}}\) which is clearly not summable. 

However, one should not yet celebrate as now the \(R^Z\) term becomes problematic as the 
estimate there (cf. Lemma~\ref{lem:R-est}) would impose that 
\(t_n \lesssim 2^{-n}\) which is most certainly summable. To remedy this, we introduce \(2^n\) sub-level bands 
within each exponential level set of width 1 and track the times at which the solution crosses these bands.
As it turns out, the remainder \(R^Z\) can be controlled on these smaller intervals while cancellation 
of terms occurs for the Riemann sum estimate on \(R^Z\) on these intervals. This is the content of 
Section~\ref{sec:beyond} and Appendix~\ref{sec:appendixA} resulting in Theorem~\ref{thm:non-explosion2}.

\subsection{Notations}
We collect here only the conventions used repeatedly throughout the paper and which are not 
introduced formally in the preliminaries. For a two-parameter map $A$, we write
\[\delta A_{st} := A_t - A_s,\]
and for an interval $I \subseteq [0, T]$ and exponent $\alpha > 0$, we use the H\"older semi-norms
\[\|A\|_{\alpha; I} := \sup_{s < t \in I} \frac{\|A_{st}\|}{|t - s|^{\alpha}},
	\qquad \|A\|_{\infty; I} := \sup_{t \in I} \|A_t\|.\]
As usual, $A \lesssim B$ means that $A \le C B$ for a constant $C > 0$ independent of the relevant variables.

In the non-explosion arguments of Section~\ref{sec:non-exp}, for $r > 0$ we write
\(\tau^r := \inf\{t \ge 0 : \|Y_t\| \ge r\}\).
Moreover, in Section~\ref{sec:beyond} and the appendix, where the argument 
is one-dimensional, we instead use
\[\tilde\tau^r := \inf\{t : |Y_t| \ge 2r\},
	\qquad \tilde\sigma^r := \sup\{t < \tilde\tau^r : |Y_t| \le r\}.\]
We introduce any further proof-local notations at the point of use.

\section{Preliminaries}
We dedicate this section to a brief self-contained introduction to the setting of branched rough paths. We also 
note the works of \cite{Hairer:14, Bonnefoi:22, Tapia:20, Varzaneh:25, Bruned:19} and of course, \cite{Gubinelli:06}
all of which include nice expositions to this theory.

\subsection{Rooted Labelled Trees}

For higher order rough paths, it is useful to introduce tree notations to keep track of the 
algebraic and analytic conditions for the extra terms arising from the Taylor expansion.

\begin{definition}
  Denote \(\CT\) for the set of all rooted and labeled trees and \(\CF\) for the set of the associated 
  forests, i.e. \(\CF\) is the free abelian monoid generated by \(\CT\) and \(f \in \CF\) is a finite 
  collection of rooted labeled trees (where we allow multiple copies of the same tree).

  Notation wise, we use greek letters \(\tau, \sigma, \rho, \dots\) for trees in \(\CT\) and 
  \(f, g, h \dots\) for forests in \(\CF\).
\end{definition}

Some examples of rooted and labeled trees are 
\[\tree<X>, \tree<1>, \tree<2>, \tree<10>, \tree<3>, \tree<20>, \tree<100>, \cdots\]
We write \(f = \prod_{i \in I} \tau_i \in \CF\) as a commutative product where \(\tau_i \in \CT\) and \(I\) 
is a finite index set. We allow \(\CT\) to contain the empty tree and we denote it by \(\mathbf{1}\) 
since it is the multiplicative identity in \(\CF\). We introduce the rooting operation \([-]_a : \CF \to \CT\) where 
for \(f \in \CF\), we define \([f]_a\) for the tree obtained by joining all the trees in \(f\) to the
root \(a\), e.g. 
\[[\mathbf{1}]_a = \tree<X> \text{ and } [\tree<X> \tree<1cb>]_d = \tree<X10>.\]
For \(\tau \in \CT\), denote \(f_\tau\) for the number of copies of \(\tau\) in \(f\). Thus, we can write 
\(f = \prod_{\tau \in \CT} \tau^{f_\tau}\)
where \(f_\tau\) is zero for all but finitely many terms. Namely, \(\tau \in f\) if and only if 
\(f_\tau \neq 0\). For \(\tau \in \CT\), the order of \(\tau\): \(|\tau|\) is simply the number of vertices of 
\(\tau\) and for \(f \in \CF\), \(|f| = \sum_{\tau \in \CT} f_\tau |\tau|\). We also write \(\#f = \sum_{\tau \in \CT} f_\tau\).

We denote \(\< \CF \>\) for the free vector space generated by \(\CF\) so \(\< \CF\>\) 
is a unital commutative algebra with identity \(\mathbf{1}\). We define the inner product of \(\< \CF \>\)
by setting 
\begin{equation}\label{eq:inner-product}
  \< f, f'\> = \mathbf{1}_{f = f'}
\end{equation}
for all \(f, f' \in \CF\) and extending linearly. 

We define the coproduct \(\Delta : \< \CF \> \to \< \CF \> \otimes \< \CF \>\) 
by first defining \(\Delta\) on \(\CF\) inductively and then extending linearly. In particular, we set
\begin{itemize}
  \item \(\Delta \mathbf{1} = \mathbf{1} \otimes \mathbf{1}\).
  \item For \(f \in \CF\), \(\Delta f = \prod_{h \in \CT} (\Delta h)^{f_h}\).
  \item For \(f \in \CF\), \(\Delta [f]_a = [f]_a \otimes \mathbf{1} + (\text{id} \otimes [-]_a) \Delta f\).
\end{itemize} 
We provide some examples:
\begin{itemize}
  \item \(\Delta \tree<X> = \tree<X> \otimes \mathbf{1} + \mathbf{1} \otimes \tree<X>\).
  \item \(\Delta \tree<1> = \tree<1> \otimes \mathbf{1} + \tree<Xb> \otimes \tree<X> + \mathbf{1} \otimes \tree<1>\).
  \item \(\Delta \tree<2> = \tree<2> \otimes \mathbf{1} + \tree<Xb> \tree<Xc> \otimes \tree<X> + \tree<Xc> \otimes \tree<1> 
    + \tree<Xb> \otimes \tree<1ac> + \mathbf{1} \otimes \tree<2>\).
\end{itemize}
Geometrically, the coproduct of a tree corresponds to the sum of all the ways of splitting the tree
into two parts by a single cut, with the part containing the root placed on the right hand side of the tensor product.
Namely, we have that 
\begin{equation}\label{eq:coproduct-sum-subtrees}
  \Delta \tau = \tau \otimes \mathbf{1} + \mathbf{1} \otimes \tau + 
      \sum_{\sigma \subsetneq \tau} (\tau \setminus \sigma) \otimes \sigma
\end{equation}
where the sum is over all proper subtrees \(\sigma\) of \(\tau\) with the same root and 
\(\tau \setminus \sigma\) is the rooted tree obtained by removing \(\sigma\) from \(\tau\).

\subsection{Hopf Algebra}

The algebra \(\< \CF\>\) equipped with the coproduct \(\Delta\) forms what is known as a 
Hopf algebra (in particular, it is known as the Connes-Kreimer Hopf algebra). We in this section provide an 
informal description of this structure.

Before we proceed with defining Hopf algebras, we first need to introduce \textit{bialgebras}.
Suppose we have an algebra \(\CH^*\) acting on another algebra \(\CH\) via the action 
\[\< \cdot, \cdot \> : \CH^* \otimes \CH \to \mathbb{R}.\]
Roughly speaking, a bialgebra is then \(\CH\) equipped with a \textit{coproduct} \(\Delta : \CH \to \CH \otimes \CH\) 
which encodes this pairing and in particular, preserves the product structure of its \textit{coalgebra} \(\CH^*\). More precisely,
\(\Delta\) is dual to the product operator \(\nabla^*\) of \(\CH^*\) in the sense that 
\[\< \nabla^*(f \otimes g), h\> = \< f \otimes g, \Delta h\>\]
where \(\nabla^* : \CH^* \otimes \CH^* \to \CH^*\) is such that \(\nabla^*(x, y) = xy\). Thus, in some 
sense, once we have the coproduct \(\Delta\), we can forget about \(\CH^*\) and work solely with \(\CH\).

A bialgebra \((\CH, \Delta)\) is \textit{graded} if it has the decomposition 
\[\CH = \bigoplus_{n \in \mathbb{N}} \CH_{(n)}\]
where \(\CH_{(n)}\) are vector spaces such that for all \(n, m \in \mathbb{N}\),
\[\CH_{(n)} \cdot \CH_{(m)} \subseteq \CH_{(n + m)}, \text{ and } 
  \Delta \CH_{(n)} \subseteq \bigoplus_{p + q = n} \CH_{(p)} \otimes \CH_{(q)}.\]
Denoting \(\CF_{(n)} = \{f \in \CF : |f| = n\}\), taking \(\CH_{(n)} = \< \CF_{(n)}\>\), 
we observe that \(\CF\) is a graded bialgebra. 

While the coproduct preserves the product structure of \(\CH^*\), we would also like a map which 
preserves the units (invertible elements) of \(\CH^*\). This is achieved with the \textit{antipole} 
map \(S : \CH \to \CH\) which is required to satisfy
\begin{equation}\label{eq:antipole}
  \nabla (S \otimes \text{id}) \Delta h = \nabla (\text{id} \otimes S) \Delta h = \< \mathbf{1}, h\> \mathbf{1}
\end{equation}
for all \(h \in \CH\), where \(\nabla\) is the product operator of \(\CH\) and 
\[\<\mathbf{1}, h\> = 
\begin{cases}
  1 & \text{if } h = \mathbf{1},\\
  0 & \text{otherwise}.
\end{cases}\]
This condition is motivated by the fact that, if \(S\) is such that for any unit \(f \in \CH^*\), 
\(S^* f = f^{-1}\) where \(S^* : \CH^* \to \CH^*\) is the dual of \(S\), then
\[\< f \otimes f, (S \otimes \text{id})\Delta h\> 
  = \< \nabla^* (S^* f \otimes f), h\> = \< S^* f \cdot f, h\> = \< \mathbf{1}, h\>.\]
On the other hand, as \(f\) is a unit,
\[\< f \otimes f, \< \mathbf{1}, h\> \mathbf{1} \otimes \mathbf{1}\> = \< \mathbf{1}, h\>.\]
Hence, in order for \(S\) to preserve the unit structure of \(\CH^*\), it is necessary to require
\(\nabla (S \otimes \text{id})\Delta h = \nabla \< \mathbf{1}, h\> \mathbf{1} \otimes \mathbf{1} 
= \< \mathbf{1}, h\> \mathbf{1}\). 

With the antipole motivated, a Hopf algebra is then simply a bialgebra equipped with an antipole.

\begin{proposition}
  A graded bialgebra satisfying \(\CH_{(0)} = \mathbb{R}\) is automatically a Hopf algebra.
\end{proposition}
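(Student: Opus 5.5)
We construct the antipode recursively along the grading. Write \(\varepsilon(h) := \< \mathbf{1}, h\>\) for the counit and \(\eta(\lambda) = \lambda \mathbf{1}\) for the unit. The key structural fact is that for a graded bialgebra with \(\CH_{(0)} = \mathbb{R}\) (spanned by \(\mathbf{1}\)), every homogeneous \(h \in \CH_{(n)}\) with \(n \ge 1\) satisfies
\[\Delta h = h \otimes \mathbf{1} + \mathbf{1} \otimes h + \sum h' \otimes h'', \qquad h', h'' \in \bigoplus_{0 < p < n} \CH_{(p)}.\]

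\emph{Step 1: the reduced coproduct.} By gradedness, \(\Delta h \in \bigoplus_{p+q=n} \CH_{(p)} \otimes \CH_{(q)}\). The components with \(p = 0\) or \(q = 0\) have the form \(\mathbf{1} \otimes a\) and \(b \otimes \mathbf{1}\), since \(\CH_{(0)} = \mathbb{R}\mathbf{1}\). Applying the counit axioms \((\varepsilon \otimes \mathrm{id})\Delta = \mathrm{id} = (\mathrm{id} \otimes \varepsilon)\Delta\) identifies \(a = b = h\). This uses that \(\varepsilon\) kills \(\CH_{(n)}\) for \(n \ge 1\) and \(\varepsilon(\mathbf{1}) = 1\), which holds for the counit of a connected graded bialgebra (in the paper's setting it is simply the pairing with \(\mathbf{1}\)).

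\emph{Step 2: recursive definition.} Set \(S(\mathbf{1}) = \mathbf{1}\). For \(h \in \CH_{(n)}\) with \(n \ge 1\), define
\[S(h) := -h - \sum S(h') h''.\]
This is well defined by induction on \(n\), because each \(h'\) has degree strictly less than \(n\). It is linear in \(h\) because \(\tilde\Delta := \Delta - (\,\cdot \otimes \mathbf{1} + \mathbf{1} \otimes \cdot\,)\) is linear, so we may write \(S(h) = -h - \nabla(S \otimes \mathrm{id})\tilde\Delta h\). Extend \(S\) linearly across the direct sum. By construction, \(\nabla(S \otimes \mathrm{id})\Delta h = S(h) + h + \sum S(h')h'' = 0 = \varepsilon(h)\mathbf{1}\) for \(n \ge 1\), and the identity is trivial for \(h = \mathbf{1}\). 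So \(S\) is a left antipode, i.e. it satisfies the first equality of \eqref{eq:antipole}.

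\emph{Step 3: the right-sided identity.} This is the main obstacle, since the recursion gives only one side of \eqref{eq:antipole}. I would define symmetrically a right inverse \(S'\) by \(S'(h) := -h - \sum h' S'(h'')\), so that \(\nabla(\mathrm{id} \otimes S')\Delta = \eta\varepsilon\). Then use the convolution product on \(\mathrm{End}(\CH)\),
\[A \star B := \nabla (A \otimes B) \Delta,\]
which is associative by coassociativity of \(\Delta\) and associativity of \(\nabla\), and has unit \(\eta\varepsilon\). Steps 2 and its mirror say \(S \star \mathrm{id} = \eta\varepsilon = \mathrm{id} \star S'\). Therefore
\[S = S \star (\mathrm{id} \star S') = (S \star \mathrm{id}) \star S' = S',\]
so \(S\) satisfies both identities in \eqref{eq:antipole}. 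The points needing care are associativity of \(\star\) and that \(\eta\varepsilon\) is its unit. Both reduce to the bialgebra axioms (coassociativity and counit) together with \(\Delta\) being an algebra morphism, which the definition of bialgebra recalled above includes. Uniqueness of the antipode follows from the same convolution argument.
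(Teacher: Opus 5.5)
Your proof is correct. The paper gives no proof of this proposition: it quotes it as a standard fact and only uses the left-hand identity $\nabla(S\otimes\mathrm{id})\Delta h=\langle\mathbf{1},h\rangle\mathbf{1}$ to compute $S$ on small forests. Your Step 2 is exactly that recursion. Your Step 3 then supplies the two-sidedness the paper leaves implicit, via the convolution argument $S=S\star(\mathrm{id}\star S')=(S\star\mathrm{id})\star S'=S'$.

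Two small refinements:

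\emph{The counit on positive degrees.} You do not need to assume that $\varepsilon$ vanishes on $\CH_{(n)}$ for $n\ge 1$; it follows from the grading. Write $\Delta h=\mathbf{1}\otimes a+b\otimes\mathbf{1}+\sum h'\otimes h''$. Comparing degree-$0$ parts in $(\varepsilon\otimes\mathrm{id})\Delta h=h=(\mathrm{id}\otimes\varepsilon)\Delta h$ forces $\varepsilon(b)=\varepsilon(a)=0$. Comparing degree-$n$ parts forces $a=b=h$.

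\emph{What the convolution argument needs.} Associativity of $\star$ and the fact that $\eta\varepsilon$ is its unit use only coassociativity, the counit axiom and unital associativity of $\nabla$. Multiplicativity of $\Delta$ is not needed for this step.
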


Thus, as \(\< \CF_{(0)}\> = \mathbb{R}\), it follows \(\< \CF\>\) is a Hopf algebra.
The antipole \(S\) of \(\< \CF\>\) can be computed explicitly by using the identity~\eqref{eq:antipole}. 
We give some examples:
\begin{itemize}
  \item \(S \mathbf{1} = \mathbf{1}\).
  \item \(0 = \nabla (S \otimes \text{id}) \Delta \tree<X> = S \tree<X> + \tree<X>\). Thus, \(S \tree<X> = -\tree<X>\).
  \item \(0 = \nabla (S \otimes \text{id}) \Delta \tree<1> = S \tree<1> + (S \tree<Xb>) \tree<X> + \tree<1>\). 
    Thus, \(S \tree<1> = -\tree<1> + \tree<X> \tree<Xb>\).
  \item \(0 = \nabla (S \otimes \text{id}) \Delta \tree<2> = S \tree<2> + (S \tree<Xb> \tree<Xc>) \tree<X> + (S \tree<Xc>) \tree<1> + (S \tree<Xb>) \tree<1ac> + \tree<2>\).
    Thus, \(S \tree<2> = -\tree<2> + \tree<Xc> \tree<1> + \tree<Xb> \tree<1ac> - \tree<X> \tree<Xb> \tree<Xc>\).
\end{itemize}

Equipping \(\< \CF \>\) with the inner product as defined by Equation~\eqref{eq:inner-product}, 
we can view \(\< \CF \>\) acting on itself via the action \(\< f, \cdot \>\) 
for all \(f \in \< \CF \>\). Thus, the underlying bialgebra of \((\< \CF \>, \Delta, S)\) 
can be viewed as \(\CH = \< \CF \>\) with itself as its own coalgebra \(\CH^* = \< \CF \>\) 
(with a different multiplication). We now give a description of the algebraic structure of 
\(\CH^* = \< \CF \>\).

By the property of the coproduct, if \(\CH^* = \< \CF \>\) has product \(\nabla^*(f, g) = f \curvearrowright g\), then
for any \(f, g, h \in \CF\), we have the defining property for \(\nabla^*\):
\[\< f \curvearrowright g, h\> = \< \nabla^*(f, g), h\> = \< f \otimes g, \Delta h\>.\]
The product \(\curvearrowright\) is known as the \textit{convolution product} (or the graphing operation e.g. \cite{Bonnefoi:22}). 
We observe that for \(f, g \in \CF\), 
\(\< f \curvearrowright g, h\> = 1\) if and only if \(f \otimes g\) is a term of \(\Delta h\). For trees, the 
convolution can be described geometrically with
\[\tau_1 \curvearrowright \tau_2 = \tau_1 \tau_2 + \tau_1 \curvearrowright_t \tau_2\]
for \(\tau_1, \tau_2 \in \CT\) where \(\tau_1 \curvearrowright_t \tau_2\) denotes the sum of all trees obtained 
by attaching \(\tau_1\) to a vertex of \(\tau_2\). 

On the other hand, \(\< \CF \>\) equipped with concatenation \(\cdot\) (i.e. free product defined by \(h_1 \cdot h_2 = h_1 h_2\)) 
forms an associative (\textit{non-commutative}) algebra. 
We denote its coproduct by \(\delta :  \< \CF \> \to  \< \CF \> \otimes  \< \CF \>\), i.e. we have that
\[\< \delta f, h_1 \otimes h_2\> = \< f, h_1 h_2\>.\]
Thus, if \(\tau \in \CT\), then, using Sweedler's notation,
\[1 = \< \tau, \tau \cdot \mathbf{1}\> = \< \delta \tau, \tau \otimes \mathbf{1}\>
  = \sum \< \tau^{(1)}, \tau\> \< \tau^{(2)}, \mathbf{1}\>.\]
Namely, there exists a unique component \(i\) of \(\delta \tau\) for which \(\tau^{(1)i} = \tau\) and 
\(\tau^{(2)i} = \mathbf{1}\).

The tensor algebra \(T(\mathbb{R}^d)\) can be identified in \(\< \CF \>\) by identifying 
\(e_a\) by \(\tree<X>\), \(e_b \otimes e_a\) by \(\tree<1>\), \(e_c \otimes e_b \otimes e_a\) by \(\tree<10>\)
and so on for \(a, b, c = 1, \dots, d\). Namely, denoting 
\(\iota : T(\mathbb{R}^d) \hookrightarrow \< \CF \>\) for this inclusion, 
\(\iota(T(\mathbb{R}^d))\) corresponds to the set of all trees with a single branch.

\begin{definition}
  Let \(\CH\) be a Hopf algebra with coalgebra \(\CH^*\). Denote \(\Hom(\CH, \mathbb{R}) \subseteq \CH^*\) 
  for the space of characters (i.e. \(\mathbb{R}\)-valued algebra homomorphisms) of \(\CH\).
\end{definition}

For all (by an abuse of notation) \(f = \< f, \cdot \> \in \Hom(\CH, \mathbb{R})\), 
as \(f\) is multiplicative, it follows that 
\[\< f, h_1 h_2\> = \< f, h_1\> \< f, h_2\> = 
  \< f \otimes f, h_1 \otimes h_2\>.\]
On the other hand, denoting \(\delta\) for the coproduct on the coalgebra \(\CH^*\), we have 
\[\< \delta f, h_1 \otimes h_2\> = \< f, h_1 h_2\>.\] 
Thus, we can identify elements of \(\Hom(\CH, \mathbb{R})\) with \(f \in \CH^*\) which satisfies 
\(\delta f = f \otimes f\). More precisely, 
\[\Hom(\CH, \mathbb{R}) \simeq \{f \in \CH^* : \delta f = f \otimes f\} =: G(\CH).\]

\begin{definition}
  \(G(\CH)\) forms a group and is known as the \textit{Butcher group}.
\end{definition}

\begin{proposition}
  For all \(f \in G(\CH)\), \(f^{-1} = S^* f\) with \(S^*\) being the adjoint of the antipole.
\end{proposition}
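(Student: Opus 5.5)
The claim is that \(S^* f\) is a two-sided inverse of \(f\) for the convolution product \(\curvearrowright\), whose unit is \(\mathbf{1}\). The plan is to test \(f \curvearrowright S^* f\) and \(S^* f \curvearrowright f\) against an arbitrary \(h \in \< \CF \>\). We then use the duality \(\< f \curvearrowright g, h\> = \< f \otimes g, \Delta h\>\), the character property of \(f\), and the antipole identity~\eqref{eq:antipole}. Since \(\< \CF\>\) pairs nondegenerately with itself by~\eqref{eq:inner-product}, it suffices to show
\[
\< S^* f \curvearrowright f, h\> = \< \mathbf{1}, h\> = \< f \curvearrowright S^* f, h\>
\]
for every forest \(h \in \CF\).

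For the left inverse, write \(\Delta h = \sum h^{(1)} \otimes h^{(2)}\) in Sweedler notation. Then
\[
\< S^* f \curvearrowright f, h\> = \sum \< S^* f, h^{(1)}\> \< f, h^{(2)}\> = \sum \< f, S h^{(1)}\> \< f, h^{(2)}\>.
\]
Because \(f \in G(\CH)\), i.e.\ \(\delta f = f \otimes f\), we have \(\< f, a\>\< f, b\> = \< f, ab\>\) for all \(a, b\). So the sum equals \(\< f, \nabla (S \otimes \mathrm{id}) \Delta h\>\). By~\eqref{eq:antipole} this is \(\< \mathbf{1}, h\> \< f, \mathbf{1}\>\).

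It remains to see that \(\< f, \mathbf{1}\> = 1\). This follows from \(\delta f = f\otimes f\) applied to \(\mathbf{1} = \mathbf{1}\cdot\mathbf{1}\), which gives \(\< f,\mathbf{1}\> = \< f,\mathbf{1}\>^2\). Hence \(\< f,\mathbf{1}\>\in\{0,1\}\). The value \(0\) would force \(\< f, h\> = \< f, h \cdot \mathbf{1}\> = 0\) for all \(h\), i.e.\ \(f=0\), which is excluded since elements of the group are nonzero characters (unital homomorphisms). The right inverse is identical, using \(\nabla(\mathrm{id}\otimes S)\Delta h = \< \mathbf{1}, h\>\mathbf{1}\).

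The step needing the most care is passing from the pairing on \(\CH^*\otimes\CH^*\) to the product on \(\CH\). Concretely, one must check that \(\< f \otimes f, a \otimes b\> = \< f, ab\>\) for sums, not just for single forests. This is linear in \(a\otimes b\), so it holds by extending the identity \(\< \delta f, a\otimes b\> = \< f, ab\>\) linearly.

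One should also confirm that \(\curvearrowright\) is indeed a product with unit \(\mathbf{1}\). This comes from \(\Delta h\) containing the terms \(h\otimes\mathbf{1}\) and \(\mathbf{1}\otimes h\), see~\eqref{eq:coproduct-sum-subtrees}. As a byproduct, the left- and right-inverse identities together show that \(S^* f\) is the inverse of \(f\) in \(G(\CH)\), consistent with the group structure asserted in the preceding definition.
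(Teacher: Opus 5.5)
Your proof is correct and follows essentially the same route as the paper. The paper gives no separate proof of this proposition, but the computation it uses just before to motivate the antipole identity~\eqref{eq:antipole} is yours run in the converse direction. It uses the same ingredients: the duality of $\Delta$ with $\curvearrowright$, the character property $\langle f, ab\rangle = \langle f,a\rangle\langle f,b\rangle$, and \eqref{eq:antipole}. Your extra checks that $\langle f,\mathbf{1}\rangle = 1$ for nonzero characters and that $\mathbf{1}$ is the unit of $\curvearrowright$ fill in details the paper leaves implicit. One small point: identifying $S^*f$ with the group inverse relies on uniqueness of inverses, which uses associativity of $\curvearrowright$ (i.e.\ coassociativity of $\Delta$); you use this tacitly without stating it.
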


\subsection{Branched Rough Paths}

Finally, we can now define branched rough paths. Defining \(\CH^*_{(n)} = \< \CF_{(n)}\>\) 
where the notation \(^*\) indicates we are working in the dual space, we define 
\[G_N(\CH) = \frac{G(\CH)}{\{f \in G(\CH) : \< f, h\> = 0 \text{ for all } h \in \CF, \ 1 \le |h| \le N\}}\]
where the fraction indicates the quotient group (it is not difficult to see that the denominator 
is indeed a normal subgroup). Namely, we identify two characters which agree on all forests of order at most \(N\). Then, for a path \(X = (X^a)_{a \in I}\), we define the branched 
rough paths over \(X\) by specifying its iterated integrals.

\begin{definition}
  For a map \(\mathbf{X} : [0, T] \to G_N(\CH)\), we denote \(\mathbf{X}_{st} = \mathbf{X}^{-1}_s \curvearrowright \mathbf{X}_t\).
  For any \(\gamma \in (\frac{1}{N + 1}, \frac{1}{N}]\), we say \(\mathbf{X}\) is a \(\gamma\)-H\"older branched rough path if 
  \[\sup_{s < t \in [0, T]} \frac{|\< \mathbf{X}_{st}, f\>|} {|t - s|^{\gamma|f|}} < \infty\]
  for all \(f \in \CF_N\).

  For a branched rough path \(\mathbf{X}\), we define its path component \(X^a\) by 
  \[\delta X^a_{st} = \< \mathbf{X}_{st}, \tree<X>\>.\]
\end{definition}

We observe that the path components of a \(\gamma\)-H\"older branched rough path are \(\gamma\)-H\"older. 
Moreover, Chen's condition (that is \(\mathbf{X}_{st} = \mathbf{X}_{su} \curvearrowright \mathbf{X}_{ut}\)) is automatically 
satisfied by branched rough paths. 

For general trees \(\tau = [f]_a\), we interpret \(\< \mathbf{X}_{st}, \tau\> = 
  \int_s^t \< \mathbf{X}_{sr}, f\> \dd X^a_r\) (note that here, the right hand side is strictly 
formal and is simply a notation for the left hand side). Thus, 
\[\< \mathbf{X}_{st}, \tree<X> \> = \int_s^t \dd X^a_r = \delta X^a_{st}, \qquad
  \< \mathbf{X}_{st}, \tree<1> \> = \int_s^t \int_s^r \dd X^b_u \dd X^a_r,\]
\[\< \mathbf{X}_{st}, \tree<2>\> = \int_s^t \left(\int_s^r \dd X^b_u\right) \left(\int_s^r \dd X^c_u\right) \dd X^a_r,\] 
and so on. These form a basis should we repeatedly apply Taylor expansion of a solution to an RDE.

\begin{definition}
  A path \(\mathbf{Y} : [0, T] \to \CH_{N - 1} = \bigoplus_{n = 0}^{N - 1} \CH_{(n)}\) is a 
  \(\mathbf{X}\)-controlled rough path for some \(\gamma\)-H\"older branched rough path \(\mathbf{X}\) if 
  \begin{equation}\label{eq:remainder}
    R^{Y, f}_{st} = R^f_{st} = \< f, \mathbf{Y}_t\> - \< \mathbf{X}_{st} \curvearrowright f, \mathbf{Y}_s\>
  \end{equation}
  satisfies 
  \[\sup_{s < t \in [0, T]} \frac{|R^f_{st}|}{|t - s|^{(N - |f|)\gamma}} < \infty\] 
  for all \(f \in \CF_{N - 1}\). We denote \(Y_t = \< \mathbf{1}, \mathbf{Y}_t\>\) for the 
  path component of \(\mathbf{Y}\) and furthermore, for \(f \in \CF_{N - 1}\), denote 
  \(\partial^f_X Y_t = \< f, \mathbf{Y}_t\>\). We call \(\partial^f_X Y_t\) the \(f\)-th Gubinelli 
  derivative of \(\mathbf{Y}\) against \(\mathbf{X}\).
\end{definition}

For simplicity, our discussion from now on focuses to the case where \(I = \{1, 2, \dots, d\}\) 
and \(X = (X^a)_{a = 1}^d = (\<\mathbf{X}, \tree<X>\>)_{a = 1}^d\) is a path in \(\mathbb{R}^d\).
Moreover, for computations involving generic labels \(a \in I\), we omit the label from the notations 
whenever it is clear from the context. 

It is easy to see that this definition of \(\mathbf{X}\)-controlled rough paths (at level \(N = 2\)) 
corresponds to the usual definition of controlled rough paths of H\"older regularity 
\(\gamma \in (\frac{1}{3}, \frac{1}{2}]\). Indeed, testing the remainder against \(h = \mathbf{1}\), we obtain
\[R^{\mathbf{1}}_{st} = Y_t - \< \mathbf{X}_{st} \curvearrowright \mathbf{1}, \mathbf{Y}_s\> 
  = Y_t - \< \mathbf{X}_{st}, \mathbf{Y}_s\>.\]
Now, as 
\begin{align*}
  \< \mathbf{X}_{st}, \mathbf{Y}_s\> 
  & = \sum_{f \in \CF_{1}} \< f, \mathbf{Y}_s\> \< \mathbf{X}_{st}, f\>
    = Y_s \< \mathbf{X}_{st}, \mathbf{1}\> 
      + \< \tree<X'>, \mathbf{Y}_s\> \< \mathbf{X}_{st}, \tree<X'>\>\\
  & = Y_s + \< \tree<X'>, \mathbf{Y}_s\> \delta X_{st}
\end{align*}
where the last equality follows as \(\mathbf{X}_{st} \in \Hom(\CH, \mathbb{R})\) so 
\(\< \mathbf{X}_{st}, \mathbf{1}\> = 1\). Thus, the Gubinelli derivative of \(Y\) against 
\(X\) in this case is \(Y'_s = \partial^{\tree<X'>}_X Y_s = \< \tree<X'>, \mathbf{Y}_s\>\). 
More generally, for any \(N\), we find that
\begin{align*}
  R^{\mathbf{1}}_{st} 
  & = \delta Y_{st} - 
  \sum_{f \in \CF_{N - 1} \setminus \{\mathbf{1}\}} \partial_X^f Y_s \< \mathbf{X}_{st}, f\>\\
  & = \delta Y_{st} - Y'_s \delta X_{st} - 
    \sum_{f \in \CF_{N - 1} \setminus \{\mathbf{1}, \tree<X'>\}} \< f, \mathbf{Y}_s\> \< \mathbf{X}_{st}, f\>
\end{align*}
Moreover, we observe that a similar formula holds for other levels of the remainder (Proposition~\ref{prop:remainder}). 
For the sake of computation, we introduce the following short hands. For all \(f \in \CF_{N}\), 
\begin{itemize}
  \item \(\Delta_r f = \Delta f - \mathbf{1} \otimes f\) which is known as the reduced coproduct,
  \item for \(g, h \in \CF_{N}\), set \(c_r(f, g, h) = \< g \otimes h, \Delta_r f\> = 
      \<g \curvearrowright h, f\>\mathbf{1}_{g \neq \mathbf{1}}\). 
    Namely, 
    \[\Delta_r f = \sum_{g, h \in \CF_{N}} c_r(f, g, h) g \otimes h.\]
\end{itemize}
\begin{lemma}\label{lem:c'-path}
  For \(f \in \CF_{N}\), \(c_r(f, g, h) = \sum_l \<f, l\> c_r(l, g, h)\).
\end{lemma}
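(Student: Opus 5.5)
The identity is essentially the expansion of \(f\) in the orthonormal basis \(\CF_N\), pushed through the linear map \(\Delta_r\). I would first note that \(c_r(\cdot, g, h)\) extends naturally to all of \(\< \CF_N \>\) by the same formula \(c_r(f, g, h) = \< g \otimes h, \Delta_r f\>\). This extension is linear in \(f\), for two reasons: \(\Delta\) is defined on \(\CF\) and extended linearly, \(f \mapsto \mathbf{1} \otimes f\) is linear, so \(\Delta_r\) is linear; and the pairing \(\< g \otimes h, \cdot\>\) is linear.

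Next I would use the inner product~\eqref{eq:inner-product}. Since \(\< f, l\> = \mathbf{1}_{f = l}\) for \(f, l \in \CF\), every element \(f \in \< \CF_N\>\) admits the finite expansion
\[
  f = \sum_{l \in \CF_N} \< f, l\> \, l ,
\]
and only finitely many coefficients are nonzero. For a single forest \(f \in \CF_N\) this sum collapses to the one term \(l = f\), which already gives the claim. For general \(f\), applying the linear functional \(c_r(\cdot, g, h)\) to both sides gives
\[
  c_r(f, g, h) = \sum_{l \in \CF_N} \< f, l\> \, c_r(l, g, h).
\]

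One point needs checking: \(\Delta_r\) must map \(\< \CF_N\>\) into \(\< \CF_N\> \otimes \< \CF_N\>\), so that \(g, h\) range over \(\CF_N\) consistently. This holds because \(\< \CF \>\) is graded, so \(\Delta \CH_{(n)} \subseteq \bigoplus_{p+q=n} \CH_{(p)} \otimes \CH_{(q)}\). I would also record that the equivalent description \(c_r(f,g,h) = \< g \curvearrowright h, f\>\mathbf{1}_{g \neq \mathbf{1}}\) is visibly linear in \(f\), which gives the same conclusion directly.

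I do not expect a genuine obstacle here. The only care needed is to be explicit that the sum runs over \(l \in \CF_N\) and that \(c_r\) is understood to be extended linearly in its first argument.
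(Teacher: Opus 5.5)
Your proposal is correct and is essentially the paper's proof. Both arguments rest on the linearity of \(\Delta_r\) and the expansion \(f = \sum_{l} \langle f, l\rangle\, l\) in the orthonormal basis \(\CF_N\). The paper writes \(\sum_{g,h}\bigl(\sum_l \langle f,l\rangle\, c_r(l,g,h)\bigr)\, g\otimes h = \Delta_r f\) and compares coefficients of \(g\otimes h\), whereas you apply the functional \(\langle g\otimes h, \Delta_r(\cdot)\rangle\) directly, which is the same computation.

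You are also right that the statement only has content for linear combinations \(f \in \langle \CF_N\rangle\). That is exactly how it is used, with \(f = \mathbf{Y}_s\), in the proof of Proposition~\ref{prop:remainder}.
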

\begin{proof}
  \begin{equs}
    \sum_{g, h} \left(\sum_l \<f, l\> c_r(l, g, h)\right) g \otimes h
    & = \sum_{l} \<f, l\> \sum_{g, h} c_r(l, g, h) g \otimes h\\
      = \sum_{l} \<f, l\> \Delta_r l & = \Delta_r \left(\sum_l \<f, l\> l\right) = \Delta_r f.
  \end{equs}
\end{proof}
Using these notations, we have the equivalent formulation of the remainder term.
\begin{proposition}\label{prop:remainder}
  For a \(\gamma\)-H\"older rough path \(\mathbf{X}\) and a path \(\mathbf{Y} : [0, T] \to \CH^*_{N - 1}\), 
  for all \(f \in \CF_{N - 1}\), 
  \[R_{st}^f = \delta (\partial^f_X Y)_{st} - 
    \sum_{g, h \in \CF_{N - 1}} c_r(g, h, f) \partial^g_X Y_s \< \mathbf{X}_{st}, h\>\]
  where \(R^f\) is defined as in Equation~\eqref{eq:remainder}.
\end{proposition}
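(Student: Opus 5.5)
We unfold the definition \eqref{eq:remainder} by expanding \(\< \mathbf{X}_{st} \curvearrowright f, \mathbf{Y}_s\>\) in the basis \(\CF_{N-1}\) and recognising the coefficients as values of \(c_r\). Since \(\mathbf{Y}_s \in \CH^*_{N-1}\), we write \(\mathbf{Y}_s = \sum_{g \in \CF_{N-1}} \partial^g_X Y_s \, g\). Linearity of the pairing then gives
\[\< \mathbf{X}_{st} \curvearrowright f, \mathbf{Y}_s\> = \sum_{g \in \CF_{N-1}} \partial^g_X Y_s \, \< \mathbf{X}_{st} \curvearrowright f, g\>,\]
so the whole task is to evaluate \(\< \mathbf{X}_{st} \curvearrowright f, g\>\).

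For this we use the defining duality of the convolution product, \(\< x \curvearrowright y, g\> = \< x \otimes y, \Delta g\>\). Extending linearly in the first slot and expanding \(\mathbf{X}_{st} = \sum_h \< \mathbf{X}_{st}, h\> h\), we get
\[\< \mathbf{X}_{st} \curvearrowright f, g\> = \sum_{h} \< \mathbf{X}_{st}, h\> \< h \otimes f, \Delta g\>.\]
We then split \(\Delta g = \Delta_r g + \mathbf{1} \otimes g\).

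\emph{The \(\mathbf{1} \otimes g\) part.} It contributes \(\< \mathbf{X}_{st}, \mathbf{1}\> \< f, g\> = \mathbf{1}_{f = g}\), because \(\mathbf{X}_{st}\) is a character. Summed against \(\partial^g_X Y_s\), this gives exactly \(\partial^f_X Y_s\). Combined with \(\< f, \mathbf{Y}_t\> = \partial^f_X Y_t\), this produces the increment \(\delta(\partial^f_X Y)_{st}\).

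\emph{The \(\Delta_r g\) part.} It gives \(\sum_h \< \mathbf{X}_{st}, h\> \< h \otimes f, \Delta_r g\> = \sum_h c_r(g, h, f) \< \mathbf{X}_{st}, h\>\), directly from the definition of \(c_r\). Substituting back yields the claimed formula
\[R^f_{st} = \delta(\partial^f_X Y)_{st} - \sum_{g, h \in \CF_{N-1}} c_r(g,h,f)\, \partial^g_X Y_s \< \mathbf{X}_{st}, h\>.\]

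The only point needing care is the truncation to \(\CF_{N-1}\) versus \(\CF_N\). Since \(\mathbf{Y}_s\) only has components on forests of order at most \(N-1\), only \(g\) with \(|g| \le N-1\) contribute. The grading of \(\Delta\) then forces \(|h| + |f| = |g| \le N-1\), so all relevant \(h\) lie in \(\CF_{N-1}\) as well. This also makes \(\< \mathbf{X}_{st}, h\>\) well defined on the quotient \(G_N(\CH)\). Lemma~\ref{lem:c'-path} is not needed here, except to justify extending \(c_r\) linearly if one prefers to work with general elements rather than basis forests.

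I expect no real obstacle. The main thing to watch is the slot convention: \(c_r(g, h, f)\) is \(\< h \otimes f, \Delta_r g\>\), with \(h\), the component paired with \(\mathbf{X}_{st}\), sitting on the left. This matches \(\mathbf{X}_{st}\) being the left factor in \(\mathbf{X}_{st} \curvearrowright f\).
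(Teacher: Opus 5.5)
Your proposal is correct and follows essentially the same route as the paper. Both arguments use the duality \(\langle x \curvearrowright f, \cdot\rangle = \langle x \otimes f, \Delta\,\cdot\rangle\) and split \(\Delta = \mathbf{1}\otimes(\cdot) + \Delta_r\). Both then use \(\langle \mathbf{X}_{st},\mathbf{1}\rangle = 1\) to produce the increment and read off \(c_r(g,h,f)\) from the reduced part. The only cosmetic difference is where linearity in \(\mathbf{Y}_s\) enters: you expand \(\mathbf{Y}_s\) in the basis \(\CF_{N-1}\) at the outset, whereas the paper applies \(\Delta\) to \(\mathbf{Y}_s\) directly and then uses Lemma~\ref{lem:c'-path} to pull out the coefficients \(\partial^g_X Y_s\).
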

\begin{proof}
  We compute, 
  \begin{align*}
    R^f_{st} & = \partial^f_X Y_t - \< \mathbf{X}_{st} \curvearrowright f, \mathbf{Y}_s\>
        = \partial^f_X Y_t - \<\mathbf{X}_{st} \otimes f, \Delta\mathbf{Y}_s\>\\[1ex]
      & = \partial^f_X Y_t - \<\mathbf{X}_{st} \otimes f, \mathbf{1} \otimes \mathbf{Y}_s\> 
          - \<\mathbf{X}_{st} \otimes f, \Delta_r\mathbf{Y}_s\>\\[1ex]
      & = \delta (\partial^f_X Y)_{st} - \sum_{h, l} c_r(\mathbf{Y}_s, h, l)
            \<\mathbf{X}_{st} \otimes f, h \otimes l\>
  \end{align*}
  Thus, by applying Lemma~\ref{lem:c'-path}, it follows that
  \begin{align*}
    R^f_{st} & = \delta (\partial^f_X Y)_{st} - \sum_{h, l} \sum_g c_r(g, h, l) \partial^g_X Y_s 
          \<\mathbf{X}_{st}, h\> \<f, l\>\\
    & = \delta (\partial^f_X Y)_{st} - \sum_{g, h} c_r(g, h, f) \partial^g_X Y_s 
          \<\mathbf{X}_{st}, h\> 
  \end{align*}
  as desired.
\end{proof}

\begin{proposition}\label{prop:rough-integral}
  Let \(\mathbf{Y} : [0, T] \to \CH^*_{N - 1}\) be a controlled rough path against the \(\gamma\)-H\"older 
  rough path \(\mathbf{X}\). Then, denoting the two parameter process
  \[I_{st}^a = \sum_{f \in \CF_{N - 1}} \partial^f_X Y_s \< \mathbf{X}_{st}, [f]_a\>\] 
  for any \(a \in I\), we have that 
  \[\|\delta I\|_{(N + 1)\gamma} \lesssim \sum_{f \in \CF_{N - 1}} \|R^f\|_{(N - |f|)\gamma} 
    \|\<\mathbf{X}, [f]\>\|_{(|f| + 1)\gamma}.\]
\end{proposition}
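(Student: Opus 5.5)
Write \(\delta I_{sut} := I_{st} - I_{su} - I_{ut}\) for \(s < u < t\). The plan is to compute \(\delta I_{sut}\) explicitly, show that it is a finite sum of products of a remainder \(R^g_{su}\) and a branched rough path increment \(\<\mathbf{X}_{ut}, [g]_a\>\), and bound it by dyadic partition refinement (the sewing-lemma argument). Since \(N\gamma < 1 < (N+1)\gamma\), the total homogeneity \((N+1)\gamma > 1\) is exactly what the sewing argument needs.

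\textbf{Algebraic identity.} By Chen's relation \(\mathbf{X}_{st} = \mathbf{X}_{su} \curvearrowright \mathbf{X}_{ut}\), the definition of the convolution product, and the recursive definition of \(\Delta\) on \([f]_a\), we have
\[\<\mathbf{X}_{st}, [f]_a\> = \<\mathbf{X}_{su}, [f]_a\> + \sum_{g, h} \<\mathbf{X}_{su}, g\>\,\<g \curvearrowright h, f\>\,\<\mathbf{X}_{ut}, [h]_a\>,\]
where the \(g = \mathbf{1}\) term gives \(\<\mathbf{X}_{ut}, [f]_a\>\). Substituting into \(I_{st}\) gives
\[\delta I_{sut} = \sum_{h} \Big( \sum_{f} \partial^f_X Y_s \<\mathbf{X}_{su} \curvearrowright h, f\> - \partial^h_X Y_u \Big)\<\mathbf{X}_{ut}, [h]_a\>.\]
By Equation~\eqref{eq:remainder}, the bracket is \(\<\mathbf{X}_{su} \curvearrowright h, \mathbf{Y}_s\> - \<h, \mathbf{Y}_u\> = -R^h_{su}\). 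Hence
\[\delta I_{sut} = -\sum_{h \in \CF_{N-1}} R^h_{su}\, \<\mathbf{X}_{ut}, [h]_a\>.\]
One must check the truncation: terms with \(|h| \ge N\) would need \(|[h]_a| > N\). They do not appear, because \(\mathbf{Y}\) only has components up to level \(N-1\) and the sum defining \(I\) is restricted to \(\CF_{N-1}\). The coefficient bookkeeping can be done with Proposition~\ref{prop:remainder} and Lemma~\ref{lem:c'-path}.

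\textbf{Analytic estimate.} The identity gives
\[|\delta I_{sut}| \le \sum_h \|R^h\|_{(N-|h|)\gamma}\, \|\<\mathbf{X}, [h]\>\|_{(|h|+1)\gamma}\, |t-s|^{(N+1)\gamma}.\]
Then run the standard argument: for a partition of \([s,t]\), remove the point \(u\) whose neighbours \(u_-, u_+\) satisfy \(|u_+ - u_-| \le 2|t-s|/(k-1)\), iterate, and sum the resulting series \(\sum_k (2/(k-1))^{(N+1)\gamma} < \infty\). This bounds the difference between \(I_{st}\) and the Riemann sums, i.e. it bounds \(\|\delta I\|_{(N+1)\gamma}\) with a constant depending only on \(\gamma\) and \(N\). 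Equivalently, apply the sewing lemma to \(I\) and read off its constant.

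\textbf{Main obstacle.} The hard step is the algebraic identity. Matching the coproduct of a grafted tree \([f]_a\) with the convolution \(\mathbf{X}_{su} \curvearrowright h\) appearing in \(R^h\) requires care with the convention that the root part sits on the right in \(\Delta\). It also requires checking that the forest-level terms (products of trees, via multiplicativity of \(\Delta\)) recombine correctly. I would first verify the identity by hand for \(N = 2\) and \(N = 3\) against the explicit coproducts listed earlier, and then write the general case as a one-line duality computation.
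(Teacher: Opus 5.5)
Your proposal is correct. Note that the paper does not prove this proposition at all: it states it as the standard germ estimate from Gubinelli's work and moves directly to the sewing lemma. So there is no in-paper argument to compare with, and yours is the standard one.

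The duality computation
\[
\langle \mathbf{X}_{st}, [f]_a\rangle = \langle \mathbf{X}_{su}, [f]_a\rangle + \sum_{g,h}\langle \mathbf{X}_{su}, g\rangle\, \langle g \curvearrowright h, f\rangle\, \langle \mathbf{X}_{ut}, [h]_a\rangle
\]
follows directly from Chen's relation together with \(\Delta[f]_a = [f]_a\otimes\mathbf{1} + (\mathrm{id}\otimes[-]_a)\Delta f\). Pairing with \(\mathbf{Y}_s\) then gives
\(\delta I_{sut} = -\sum_{h} R^h_{su}\langle\mathbf{X}_{ut},[h]_a\rangle\)
straight from the definition \eqref{eq:remainder}; the coefficient bookkeeping you anticipated is not needed. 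The truncation handles itself for a simpler reason than the one you give: \(\langle g\curvearrowright h, f\rangle\neq 0\) forces \(|h|\le |f|\le N-1\). Finally, \(|u-s|^{(N-|h|)\gamma}|t-u|^{(|h|+1)\gamma}\le |t-s|^{(N+1)\gamma}\), so your displayed pointwise bound on \(|\delta I_{sut}|\) already is the proposition.

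One correction concerns your partition-removal paragraph. It does not bound \(\|\delta I\|_{(N+1)\gamma}\). It takes that bound as input and controls \(\bigl|\int_s^t Y_r\,\mathrm{d}\mathbf{X}^a_r - I^a_{st}\bigr|\), the gap between the germ and its sewn limit. That is the remainder inequality the paper states immediately after the proposition, and it is the only place where \((N+1)\gamma>1\) is used. The estimate on \(\delta I\) itself holds for any \(\gamma\).

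So the phrase ``i.e. it bounds \(\|\delta I\|_{(N+1)\gamma}\)'' conflates the hypothesis of the sewing lemma with its conclusion. Either drop that paragraph or relabel it as the sewing consequence. As a minor point, \(\gamma\in(\frac{1}{N+1},\frac{1}{N}]\) only gives \(N\gamma\le 1\), not \(N\gamma<1\).
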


As a consequence of the above proposition, in the case where \((N + 1) \gamma > 1\), we may apply the sewing 
lemma to \(I_{st}\) to define the rough integral 
\[\int_s^t Y_r \dd \mathbf{X}_r = \left(\int_s^t Y_r \dd \mathbf{X}_r^a\right)_{a = 1}^d := 
  \left(\lim_{|\CP| \to 0} \sum_{(u, v) \in \CP} I_{uv}^a\right)_{a = 1}^d\]
with \(\CP\) being a partition of \([s, t]\) and we have the remainder inequality 
\begin{align*}
  & \left\|\int_s^t Y_r \dd \mathbf{X}_r - \left(\sum_{f \in \CF_{N - 1}} \partial^f_X Y_s \< \mathbf{X}_{st}, [f]_a\>\right)_{a = 1}^d\right\|\\
  & \lesssim \left(\sum_{f \in \CF_{N - 1}} \|R^f\|_{(N - |f|)\gamma} \|\mathbf{X}^{[f]}\|_{(|f| + 1)\gamma}\right)|t - s|^{(N + 1)\gamma}
\end{align*}
We note that at this point \(\int_s^{\cdot} Y_r \dd \mathbf{X}_r\) takes value in \(\mathbb{R}^d\) and is notably not a tree/forest 
path. As we would like to talk about rough differential equations, namely we would want to phrase a fixed point 
argument involving rough integrals, we need to extend the \(\mathbb{R}^d\)-valued path \(\int_s^{\cdot} Y_r \dd \mathbf{X}_r\) 
to a forest path \(\int_s^{\cdot} \mathbf{Y}_r \dd \mathbf{X}_r\). This is achieved by specifying 
\(\partial^f_X \left(\int_s^{\cdot} \mathbf{Y}_r \dd \mathbf{X}_r\right) = \< f, \int_s^{\cdot} \mathbf{Y}_r \dd \mathbf{X}_r \>\) 
for each \(f \in \CF_{N - 1}^*\) with 
\begin{enumerate}
  \item \(\partial^{\mathbf{1}}_X \int_s^{\cdot} \mathbf{Y}_r \dd \mathbf{X}_r = \int_s^{\cdot} Y_r \dd \mathbf{X}_r\).
  \item\label{rule2} For \(f \in \CF_{\le N - 2}\), we set \(\partial^{[f]}_X \int_0^{\cdot} \mathbf{Y}_r \dd \mathbf{X}_r 
    = \partial^f_X \mathbf{Y}_{\cdot}\).
  \item For \(f_1 \cdot f_2 \in \CF_{\le N - 1}\), \(\partial^{f_1 \cdot f_2}_X \int_0^{\cdot} \mathbf{Y}_r \dd \mathbf{X}_r = 0\).
\end{enumerate}
Thus, with the context of the construction~\ref{rule2} and rough differential equations in mind, we 
observe that if \(\mathbf{Y}\) is a solution to the rough differential equation
\begin{equation}\label{eq:bRDE}
  \dd \mathbf{Y}_t = b(\mathbf{Y}_t) \dd t + \phi(\mathbf{Y}_t) \dd \mathbf{X}_t,
\end{equation}
\(\mathbf{Y}\) necessarily satisfies the algebraic condition that 
\(\partial^{[f]}_X Y = \partial^f_X \phi(\mathbf{Y})\). This motivates the following definition.

\begin{definition}[Coherence]
  A \(\mathbf{X}\)-controlled rough path \(\mathbf{Y}\) is said to be \(\phi\)-coherent if for all 
  \(f \in \CF_{N - 2}\),
  \[\partial^{[f]}_X Y = \partial^f_X \phi(\mathbf{Y}).\]
\end{definition}

\section{A priori estimates for coherent controlled rough paths}

Throughout this section, we fix \(\mathbf{X}\) to be a level \(N\) branched rough path on \(\mathbb{R}^d\) 
and take \(\mathbf{Y}\) to be a \(\phi\)-coherent \(\mathbf{X}\)-controlled rough path on \(\mathbb{R}^m\) 
for some function \(\phi : \mathbb{R}^m \to \CL(\mathbb{R}^d, \mathbb{R}^m)\).

\subsection{Composition with smooth functions}
It is well known that (cf. \cite[Section 7.3]{Friz:20}), in the case where \(N = 2\), the
composition of controlled rough paths with a smooth function is again a controlled rough path. More precisely, for
\((X, \mathbb{X}) \in \CC^{\gamma}([0, T], V)\), \((Y, Y') \in \CD_X^{2\gamma}([0, T], W)\) and
\(\phi \in C^2(W, \bar W)\), we have that
\[(\phi(Y), \phi(Y)') = (\phi(Y), D\phi(Y) Y') \in \CD_X^{2\gamma}([0, T], \bar W).\]
This fact can be generalized for branched rough paths resulting in the following.

\begin{proposition}[Lemma 8.6 of \cite{Gubinelli:06}]\label{prop:comp} 
  For any \(\phi \in C^{N}\), defining \(\phi(\mathbf{Y})\) by
  \[\phi(\mathbf{Y}) = \sum_{f \in \CF_{N - 1}} \partial^f_X \phi(\mathbf{Y}) f\]
  where
  \begin{equation}\label{eq:deriv-comp-def}
    \partial^f_X \phi(\mathbf{Y}) = \sum_{\tau_1 \cdots \tau_{\# f} = f} \frac{1}{(\# f)!}D^{\# f}
    \phi(Y)[(\partial^{\tau_i}_X Y)_{i = 1}^{\# f}]
  \end{equation}
  for any \(f \in \CF_{N - 1}\), we have that \(\phi(\mathbf{Y})\) is
  also a controlled rough path against \(\mathbf{X}\).
\end{proposition}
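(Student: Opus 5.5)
We need to show that for every \(f \in \CF_{N-1}\) the remainder
\[
R^{\phi,f}_{st} := \partial^f_X\phi(\mathbf{Y})_t - \< \mathbf{X}_{st}\curvearrowright f, \phi(\mathbf{Y})_s\>
\]
satisfies \(|R^{\phi,f}_{st}|\lesssim |t-s|^{(N-|f|)\gamma}\). The plan is to Taylor expand every quantity in \(\partial^f_X\phi(\mathbf{Y})_t\) around time \(s\), rewrite all increments of \(Y\)-data through the controlled structure, and then show that the terms of low homogeneity reassemble exactly into \(\< \mathbf{X}_{st}\curvearrowright f, \phi(\mathbf{Y})_s\>\).

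\textbf{Step 1 (Taylor expansion).} Fix \(f\) with \(\#f=k\) and write \(n := N-|f|\). In \eqref{eq:deriv-comp-def} at time \(t\), expand \(D^k\phi(Y_t)\) around \(Y_s\) to order \(n-1\). The Taylor remainder is \(O(|\delta Y_{st}|^{n})=O(|t-s|^{n\gamma})\), since \(\phi\in C^N\) and \(k+n\le N\). This gives
\[
D^k\phi(Y_t)=\sum_{j<n}\frac{1}{j!}D^{k+j}\phi(Y_s)[(\delta Y_{st})^{\otimes j}] + O(|t-s|^{n\gamma}).
\]

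\textbf{Step 2 (substitute the controlled expansions).} Replace \(\delta Y_{st}\) by \(\sum_{g\neq\mathbf 1}\partial^g_XY_s\<\mathbf{X}_{st},g\>+R^{\mathbf 1}_{st}\). Replace each \(\partial^{\tau_i}_XY_t\) by \(\<\mathbf{X}_{st}\curvearrowright\tau_i,\mathbf{Y}_s\>+R^{\tau_i}_{st}\), which is the expression from Proposition~\ref{prop:remainder}. Multiply everything out.
\begin{itemize}
  \item Any product containing a remainder \(R^{\tau_i}\) has order at least \(|t-s|^{(N-|\tau_i|)\gamma}\), which is at least \(|t-s|^{n\gamma}\) because \(|\tau_i|\le|f|\).
  \item Any product containing \(R^{\mathbf 1}\) has order at least \(|t-s|^{N\gamma}\).
  \item Every remaining monomial is a product of branched-rough-path increments \(\prod\<\mathbf{X}_{st},h_\ell\>\). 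Since \(\mathbf{X}_{st}\) is a character, this equals \(\<\mathbf{X}_{st},h\>\) with \(h=\prod h_\ell\), and has size \(|t-s|^{|h|\gamma}\).
\end{itemize}
Monomials with \(|h|+|f|\ge N\) are therefore also \(O(|t-s|^{n\gamma})\) and can be discarded. We are left with a finite sum \(\sum_{|h|<n} A_h(s)\<\mathbf{X}_{st},h\>\), where each \(A_h(s)\) is a polynomial in \(D^\ell\phi(Y_s)\) and in the \(\partial^g_XY_s\).

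\textbf{Step 3 (the combinatorial identity; main obstacle).} It remains to prove
\[
A_h(s)=\<h\curvearrowright f,\phi(\mathbf{Y})_s\>=\sum_{l}\<h\otimes f,\Delta l\>\,\partial^l_X\phi(\mathbf{Y})_s .
\]
Unfolding this with \eqref{eq:deriv-comp-def}, it becomes a purely algebraic statement. It asserts that the coefficient expansion defining the composition is compatible with the coproduct, in the sense that the map \(\mathbf{Y}\mapsto\phi(\mathbf{Y})\) intertwines translation by characters. The first reduction is to note that both sides depend on \(\phi\) only through its Taylor coefficients at \(Y_s\). Hence it suffices to check the identity for polynomial \(\phi\), and by multilinearity for monomials \(\phi(y)=\prod_i \ell_i(y)\) with \(\ell_i\) linear.

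For linear \(\phi\), \eqref{eq:deriv-comp-def} gives \(\phi(\mathbf{Y})=\phi\circ\mathbf{Y}\) componentwise, and the identity is just the defining relation \eqref{eq:remainder} of \(\mathbf{Y}\).

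For products, two facts are needed. First, \eqref{eq:deriv-comp-def} is a Leibniz rule over forests: \(\partial^f_X(\phi\psi)(\mathbf{Y})=\sum_{f_1f_2=f}\partial^{f_1}_X\phi(\mathbf{Y})\,\partial^{f_2}_X\psi(\mathbf{Y})\), where the factor \(1/k!\) exactly absorbs the multinomial counting. Second, \(\Delta\) is multiplicative and \(\mathbf{X}_{st}\) is a character. Together these make the set of \(\phi\) satisfying the identity closed under products.

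I expect the bookkeeping of symmetry factors in this Leibniz step to be the hardest part. The cleanest route is to check it by counting ordered factorizations \(\tau_1\cdots\tau_k=f\), rather than by using symmetry factors directly. Once Step 3 holds, Steps 1 and 2 give \(R^{\phi,f}_{st}=O(|t-s|^{(N-|f|)\gamma})\) for every \(f\in\CF_{N-1}\), which is the claim.
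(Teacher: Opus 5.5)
Your argument has one real gap, and it sits in the linear base case of Step~3: the claim that ``\eqref{eq:deriv-comp-def} gives \(\phi(\mathbf{Y})=\phi\circ\mathbf{Y}\) componentwise'' is false in general. Formula \eqref{eq:deriv-comp-def} only ever feeds \emph{tree} derivatives \(\partial^{\tau_i}_XY\) into \(D^{\#f}\phi\). So for linear \(\phi\) it returns \(\partial^g_X\phi(\mathbf{Y})=0\) for every forest \(g\) with \(\#g\ge 2\). By contrast, \(\phi\circ\mathbf{Y}\) has \(g\)-component \(\phi(\partial^g_XY)\), and your Step~2 keeps the non-tree terms \(\partial^g_XY_s\langle\mathbf{X}_{st},g\rangle\) in \(\delta Y_{st}\).

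This is not cosmetic: for a general controlled path in the paper's forest-valued sense, the proposition is false. Take \(N=3\), \(d=m=1\), the canonical lift of the monotone path \(X_t=t^\gamma\), and \(\mathbf{Y}\) with \(Y=X^2\), \(\partial^{\bullet}_XY=2X\), \(\partial^{[\bullet]}_XY=0\), \(\partial^{\bullet\bullet}_XY=1\). Using \(\langle\mathbf{X}_{st},\bullet\bullet\rangle=(\delta X_{st})^2\) and \(\Delta(\bullet\bullet)=\bullet\bullet\otimes\mathbf{1}+2\,\bullet\otimes\bullet+\mathbf{1}\otimes\bullet\bullet\), all four remainders of \(\mathbf{Y}\) vanish identically. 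Yet for \(\phi(y)=y\) you get \(R^{\phi(\mathbf{Y}),\mathbf{1}}_{st}=\delta(X^2)_{st}-2X_s\delta X_{st}=(\delta X_{st})^2\), which equals \(t^{2\gamma}\) at \(s=0\) and is not \(O(t^{3\gamma})\).

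The missing hypothesis is \(\partial^g_XY=0\) for all \(g\in\CF^+_{N-1}\setminus\CT_{N-1}\). The paper uses it tacitly: it is invoked as ``the fact that \(\partial^f_XY=0\) for all \(f\in\CF^+_{N-1}\setminus\CT_{N-1}\)'' in the proof of the formula for \(R^{\phi(\mathbf{Y}),\mathbf{1}}\) later in the same subsection. It holds for rough integrals, and hence for bRDE solutions, by rule~3 of the forest extension of the integral.

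Once you add this hypothesis, your base case is correct. For \(f\in\{\mathbf{1}\}\cup\CT\), the non-tree \(l\) simply drop out of \(\langle\mathbf{X}_{st}\curvearrowright f,\mathbf{Y}_s\rangle\). For non-tree \(f\), both sides vanish, because the right-hand factors of \(\Delta l\) for a tree \(l\) are trees or \(\mathbf{1}\). The rest of your argument then goes through.

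The Leibniz step is easier than you fear. A pair of ordered tuples with products \(f_1,f_2\) and \(\#f_1=j\) arises from exactly \(\binom{k}{j}\) pairs (ordered \(k\)-tuple, \(j\)-subset), and \(\binom{k}{j}/k!=1/(j!\,(k-j)!)\). Hence \(\phi\mapsto\phi(\mathbf{Y})_s\) is multiplicative for the forest product. Meanwhile \(Z\mapsto(\mathbf{X}_{st}\otimes\mathrm{id})\Delta Z\) is an algebra morphism up to terms of degree \(\ge N\), which only contribute \(O(|t-s|^{(N-|f|)\gamma})\). Define \(A_h\) as coefficients of a formal expansion in commuting tree indeterminates, so that ``expansion of a product equals product of expansions'' is literally an identity of coefficients.

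The paper itself does not prove the proposition; it cites Gubinelli. The closely related computation it does carry out is the formula for \(R^{\phi(\mathbf{Y}),\mathbf{1}}\) and Corollary~\ref{cor:remain-phi-Y}. That is your Steps~1--2 done by hand: it regroups \(\sum_f\partial^f_X\phi(\mathbf{Y}_s)\langle\mathbf{X}_{st},f\rangle\) as \(\sum_k\frac{1}{k!}D^k\phi(Y_s)[(\sum_\tau\partial^\tau_XY_s\langle\mathbf{X}_{st},\tau\rangle)^{\otimes k}]\), which needs the same tree-only hypothesis. Your reduction to polynomials with closure under products avoids that bookkeeping, and it shows structurally why composition is compatible with the coproduct.
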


As we had assumed that \(\mathbf{Y}\) is \(\phi\)-coherent, Equation~\eqref{eq:deriv-comp-def}
can be iterated as in the following example so that \(\partial^f_X \phi(\mathbf{Y})\) can be
written explicitly as a function \(\partial^f_X \phi\) of \(Y\).
\begin{example}
  Let us compute \(\partial^{\tree<2'>}_X \phi(\mathbf{Y})\) in the case where \(d = 1\):
  \begin{align*}
    \partial^{\tree<2'>}_X \phi(\mathbf{Y})
    & = D \phi(Y)\partial^{\tree<2'>}_X Y = D \phi(Y)\partial^{[(\tree<X'>)^2]}_X Y
      = D \phi(Y)\partial^{(\tree<X'>)^2}_X \phi(\mathbf{Y})\\
    & = D \phi(Y) \frac{1}{2} D^2 \phi(Y)[\partial^{\tree<X'>}_X Y \otimes \partial^{\tree<X'>}_X Y]
      = \frac{1}{2}D \phi(Y)D^2 \phi(Y)[\phi(Y) \otimes \phi(Y)].
  \end{align*}
\end{example}

We introduce the following combinatorial factor \(\Pi : \CF \to \mathbb{N}\) where \(\Pi(\mathbf{1}) = 1\),
and for any \(f \in \CF\)
\[\Pi([f]) = \Pi(f) = \prod_{\tau \in \CT} f_\tau ! \Pi(\tau)^{f_\tau}.\]
\(\Pi(f)\) is known as the symmetry factor of \(f\). Geometrically, for \(\tau \in \CT\), \(\Pi(\tau)\)
is the number of permutations one can do on the branches starting from any
vertices of \(\tau\) without changing the tree. E.g. if \(\tau = \tree<32>\), then \(\Pi(\tau) = 3! 2! = 12\).

The symmetric factor allows us to easily compute the coefficient on the right hand
side of \eqref{eq:deriv-comp-def}. In particular, by noticing that the multinomial coefficient can be
written as
\[\mul(f) = \sum_{\tau_1, \dots, \tau_{\# f} \in \CT_{N - 1}} \mathbf{1}_{\{\tau_1 \cdots \tau_{\# f} = f\}}
  = \binom{\# f}{f_h : h \in f} = \frac{(\# f)!}{\Pi(f)} \prod_{\tau \in f} \Pi(\tau),\]
it follows that
\begin{equation}\label{eq:deriv-comp-alt}
  \partial^f_X \phi(\mathbf{Y}) = \prod_{\tau \in f} \frac{\Pi(\tau)}{\Pi(f)} D^{\# f}\phi(Y)[(\partial^\tau_X Y)_{\tau \in f}].
\end{equation}

In the level \(N = 2\) case, we recall that we have the Taylor estimate
\begin{equation}
  \begin{split}
    \|R^{\phi(\mathbf{Y})}_{st}\|
    \le & \sup_{\lambda \in [0, 1]} \frac{1}{2}\|D^2\phi ((1 - \lambda) Y_s + \lambda Y_t)\| \|Y_{st}\|^2
          + \|D\phi(Y_s) R^{\mathbf{Y}}_{st}\|,
  \end{split}
\end{equation}
The main observation of this section is that this Taylor estimate can be generalized for arbitrary
\(N\) for which we have that:
\begin{equs}
  \|R^{\phi(\mathbf{Y}), f}_{st}\| & \le \frac{\|\delta Y_{st}\|^{N - |f|}}{(N - |f|)!}
  \sup_{\lambda \in [0, 1]}\|D^{N - |f|} \partial^f_X \phi(\lambda \mathbf{Y}_s + (1 - \lambda) \mathbf{Y}_t)\|\\
  & + \text{terms of regularity } (N - |f|)\gamma,
\end{equs}
for any \(f \in \CF_{N - 1}\).
In particular, this is described by Corollary~\ref{cor:remain-phi-Y}. This observation is crucial as
it will allow us to estimate the remainder terms of a composition of any order by the remainder term
of the original controlled rough path on the first order (cf. Lemma~\ref{lem:remain-phi-Y}).

\begin{example}
  Let us compute \(R^{\phi(\mathbf{Y}), \tree<X'>}\) in the case where \(N = 3\) and \(d = 1\).

  Firstly, we compute \(c_r(g, h, \tree<X'>)\) for any \(g, h \in \CF_2\). Since \(N = 2\) and
  \(c_r(g, h, \tree<X'>)\) does not vanish only if \(|g| = |h| + 1\) and \(h \neq \mathbf{1}\),
  we only need to compute the \(c_r(g, h, \tree<X'>)\) for \(h = \tree<X'>\) and \(g \in \{(\tree<X'>)^2, \tree<1'>\}\).
  Thus, computing the reduced coproducts,
  \begin{align*}
    \Delta_r (\tree<X'>)^2 & = 2 \tree<X'> \otimes \tree<X'> + (\tree<X'>)^2 \otimes \mathbf{1}\\
    \Delta_r \tree<1'> & = \tree<X'> \otimes \tree<X'> + \tree<1'> \otimes \mathbf{1},
  \end{align*}
  we have that \(c_r((\tree<X'>)^2, \tree<X'>, \tree<X'>) = 2\) and \(c_r(\tree<1'>, \tree<X'>, \tree<X'>) = 1\).
  Consequently, by Proposition~\ref{prop:remainder} and Equation~\eqref{eq:deriv-comp-def}, we have that
  \begin{align*}
    R^{\phi(\mathbf{Y}), \tree<X'>}_{st}
    & = \delta(\partial^{\tree<X'>}_X \phi(\mathbf{Y}))_{st}
      - 2 \partial^{(\tree<X'>)^2}_X \phi(\mathbf{Y}_s) \<\mathbf{X}_{st}, \tree<X'>\>
      - \partial^{\tree<1'>}_X \phi(\mathbf{Y}_s) \<\mathbf{X}_{st}, \tree<X'>\>\\
    & = \delta(\partial^{\tree<X'>}_X \phi(\mathbf{Y}))_{st}
      - D^2 \phi(Y_s)[\partial^{\tree<X'>}_X Y_s \otimes \partial^{\tree<X'>}_X Y_s \<\mathbf{X}_{st}, \tree<X'>\>]
      - (D \phi(Y_s))^2 \partial^{\tree<X'>}_X Y_s \<\mathbf{X}_{st}, \tree<X'>\>.
  \end{align*}
  On the other hand, by computing
  \begin{align*}
    R^{\mathbf{Y}, \mathbf{1}}_{st}
    & = \delta Y_{st} - \partial^{\tree<X'>}_X Y_s \<\mathbf{X}_{st}, \tree<X'>\>
      - \partial^{\tree<1'>}_X Y_s \<\mathbf{X}_{st}, \tree<1'>\>
  \end{align*}
  we have that \(\partial^{\tree<X'>}_X Y_s \<\mathbf{X}_{st}, \tree<X'>\> = \delta(Y)_{st} + o(2\gamma)\)
  where we have denoted \(o(2\gamma)\) for terms of regularity \(2\gamma\). Thus,
  \begin{align*}
    R^{\phi(\mathbf{Y}), \tree<X'>}_{st} & = \delta(D\phi(Y)\phi(Y))_{st}
      - D^2 \phi(Y_s) \phi(Y_s) \delta Y_{st} - (D\phi(Y_s))^2 \delta Y_{st} + o(2\gamma)\\
      & = \delta(D\phi(Y)\phi(Y))_{st} - D(D\phi(Y)\phi(Y))\delta Y_{st} + o(2\gamma)
  \end{align*}
  from which we observe the above estimate by first order Taylor expansion.
\end{example}

For a general level \(N\), this observation is straightforward in the special case where \(\tau = \mathbf{1}\):
\begin{proposition}
  Taking \(\mathbf{X}, \mathbf{Y}\) as above, we have
  \[R^{\phi(\mathbf{Y}), \mathbf{1}}_{st} =
    \delta(\phi(Y))_{st} - \sum_{k = 1}^{N - 1} \frac{1}{k!} D^k \phi(Y_s)[(\delta Y_{st})^{\otimes k}] - R^\sharp_{st}\]
  where
  \[R^\sharp_{st} := \sum_{k = 1}^{N - 1} \sum_{m = 1}^k \frac{(-1)^{m}}{k!} \binom{k}{m}
    D^k \phi(Y_s)\left[(R^{\mathbf{Y}, \mathbf{1}}_{st})^{\otimes m} \otimes (\delta Y_{st})^{\otimes (k - m)}\right],\]
  which has H\"older regularity \(N\gamma\).
\end{proposition}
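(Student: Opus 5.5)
The plan is to expand the defining formula for \(R^{\phi(\mathbf{Y}), \mathbf{1}}_{st}\) directly and resum the forest expansion into powers of a single increment. We never Taylor expand \(\phi\) itself, so the \(N\)-th derivative plays no role. By \eqref{eq:remainder} with \(f = \mathbf{1}\), and since \(\mathbf{X}_{st} \curvearrowright \mathbf{1} = \mathbf{X}_{st}\),
\[R^{\phi(\mathbf{Y}), \mathbf{1}}_{st} = \phi(Y_t) - \sum_{f \in \CF_{N-1}} \partial^f_X \phi(\mathbf{Y}_s) \< \mathbf{X}_{st}, f\>.\]
Throughout, as in the paper, all products of Gubinelli data are taken in the truncated algebra \(\CH_{N-1}\). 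Here \(\mathbf{Y}_s \in \CH_{N-1}\), and forests of order \(\ge N\) are identified to zero.

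First I substitute \eqref{eq:deriv-comp-def} for \(\partial^f_X \phi(\mathbf{Y}_s)\). I then use that \(\mathbf{X}_{st}\) is a character, so \(\<\mathbf{X}_{st}, \tau_1 \cdots \tau_k\> = \prod_i \<\mathbf{X}_{st}, \tau_i\>\). Summing over \(f\) together with its ordered factorisations \(\tau_1 \cdots \tau_k = f\) turns the sum into a sum over ordered \(k\)-tuples of nonempty trees, weighted by \(\frac{1}{k!}\). By multilinearity of \(D^k \phi(Y_s)\) this gives
\[\sum_{f} \partial^f_X \phi(\mathbf{Y}_s) \< \mathbf{X}_{st}, f\> = \phi(Y_s) + \sum_{k=1}^{N-1} \frac{1}{k!} D^k \phi(Y_s)\big[A_{st}^{\otimes k}\big],\]
where \(A_{st} := \sum_{\tau \in \CT_{N-1}\setminus\{\mathbf{1}\}} \partial^\tau_X Y_s \<\mathbf{X}_{st}, \tau\>\). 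The \(k\)-fold tensor power is taken in the truncated sense, and \(k \le N-1\) because \(\#f \le |f| \le N-1\).

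Second, I identify \(A_{st}\) through the \(f = \mathbf{1}\) remainder of \(\mathbf{Y}\). By Proposition~\ref{prop:remainder}, \(R^{\mathbf{Y},\mathbf{1}}_{st} = \delta Y_{st} - \sum_{f \ne \mathbf{1}} \partial^f_X Y_s \<\mathbf{X}_{st}, f\>\). The \(\mathbb{R}^m\)-valued component of \(\mathbf{Y}\) carries only tree derivatives: this is rule 3 of the integral lift, which via \(\phi\)-coherence applies to solution-type paths. Hence \(A_{st} = \delta Y_{st} - R^{\mathbf{Y},\mathbf{1}}_{st}\). Third, I expand \(D^k\phi(Y_s)[(\delta Y_{st} - R^{\mathbf{Y},\mathbf{1}}_{st})^{\otimes k}]\) binomially, using the symmetry of \(D^k\phi\). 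The \(m = 0\) terms give \(\frac{1}{k!}D^k\phi(Y_s)[(\delta Y_{st})^{\otimes k}]\). The \(m \ge 1\) terms are exactly \(R^\sharp_{st}\). Moving everything to one side yields the claimed identity.

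For the regularity statement, every summand of \(R^\sharp\) contains at least one factor \(R^{\mathbf{Y},\mathbf{1}}_{st}\), which is \(O(|t-s|^{N\gamma})\) by the controlledness of \(\mathbf{Y}\). The remaining factors are \(\delta Y_{st}\), which is bounded, and \(D^k\phi(Y_s)\), which is bounded on the range of \(Y\) since \(\phi \in C^N\). So \(\|R^\sharp\|_{N\gamma} < \infty\).

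The main obstacle is the bookkeeping in the first step. I must check that summing over forests \(f\) together with their ordered factorisations produces exactly the coefficient \(\frac{1}{k!}\) in front of each ordered tuple; this is the content of the \(\mul(f)\) computation preceding \eqref{eq:deriv-comp-alt}. I must also check that the truncation \(|f| \le N-1\) is consistent on both sides. This consistency is exactly why the identity must be read in the truncated algebra: the tensor powers \(A_{st}^{\otimes k}\) drop all products of trees of total order \(\ge N\), matching the range of the forest sum. Without this convention, one would only obtain the identity modulo \(N\gamma\)-regular terms.
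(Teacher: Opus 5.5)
Your argument follows the paper's proof step for step: resum the forest expansion via \eqref{eq:deriv-comp-def} into \(\sum_k \frac{1}{k!}D^k\phi(Y_s)[A_{st}^{\otimes k}]\), identify \(A_{st}=\delta Y_{st}-R^{\mathbf{Y},\mathbf{1}}_{st}\), then expand binomially. (A small point: \(\phi\)-coherence only constrains \(\partial^{[f]}_X Y\), so it does not give the vanishing of \(\partial^f_X Y\) for non-tree forests. That vanishing is an extra property of solutions, inherited from rule 3 for \(\mathbf{Z}\), and the paper also uses it tacitly.)

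The genuine problem is the truncation convention you rely on for exactness. It does not survive your third step. The forest sum only produces ordered tuples \((\tau_1,\dots,\tau_k)\) of trees with \(|\tau_1|+\dots+|\tau_k|\le N-1\). So ``\(A_{st}^{\otimes k}\) in the truncated sense'' depends on the tree-by-tree decomposition of \(A_{st}\), not on the vector \(A_{st}\in\mathbb{R}^m\). Once you substitute the plain vectors \(\delta Y_{st}-R^{\mathbf{Y},\mathbf{1}}_{st}\) and expand binomially, you are computing the full tensor power, and the tuples of total order \(\ge N\) reappear.

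For instance, take \(N=3\), \(d=1\), \(\bullet=[\mathbf{1}]\). The quantity \(\frac12 D^2\phi(Y_s)[(\delta Y_{st}-R^{\mathbf{Y},\mathbf{1}}_{st})^{\otimes 2}]=\frac12 D^2\phi(Y_s)[A_{st}^{\otimes 2}]\) contains the term \(D^2\phi(Y_s)[\phi(Y_s),D\phi(Y_s)\phi(Y_s)]\,\<\mathbf{X}_{st},\bullet\>\<\mathbf{X}_{st},[\bullet]\>\). This term has no counterpart in the sum over \(\CF_2\), since \(|\bullet\,[\bullet]|=3\). Giving \(R^{\mathbf{Y},\mathbf{1}}\) formal degree \(N\) instead would make the truncated \(R^\sharp\) vanish identically, which is not the statement either.

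So for \(N\ge 3\) the displayed identity, with \(R^\sharp\) as defined, is not exact. The paper's proof has the same slip: setting \(\partial^f_X Y=0\) for \(|f|\ge N\) does not remove products of low-order trees whose total order is \(\ge N\).

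What can actually be proved is what your last sentence gestures at, but you must carry it out. Put
\[E_{st}:=\sum_{k=2}^{N-1}\frac{1}{k!}\sum_{\substack{\tau_1,\dots,\tau_k\in\CT^+_{N-1}\\ |\tau_1|+\dots+|\tau_k|\ge N}}D^k\phi(Y_s)\Big[\bigotimes_{i=1}^k\partial^{\tau_i}_X Y_s\,\<\mathbf{X}_{st},\tau_i\>\Big].\]
Your steps then give exactly
\[R^{\phi(\mathbf{Y}),\mathbf{1}}_{st}=\delta(\phi(Y))_{st}-\sum_{k=1}^{N-1}\frac{1}{k!}D^k\phi(Y_s)[(\delta Y_{st})^{\otimes k}]-(R^\sharp_{st}-E_{st}).\]
Your regularity argument handles \(R^\sharp\). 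For \(E\), use three facts:
\begin{itemize}
\item \(|\<\mathbf{X}_{st},\tau\>|\lesssim|t-s|^{|\tau|\gamma}\);
\item \(D^k\phi(Y)\) and \(\partial^\tau_X Y\) are bounded on \([0,T]\);
\item \(|t-s|^{(|\tau_1|+\dots+|\tau_k|)\gamma}\le T^{(|\tau_1|+\dots+|\tau_k|-N)\gamma}|t-s|^{N\gamma}\).
\end{itemize}
Hence the correction term \(R^\sharp-E\) is \(N\gamma\)-regular, which is the content the statement actually needs.
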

\begin{proof}
  Observing, for any \(f, g \in \CF_{N - 1} \setminus \{\mathbf{1}\} =: \CF_{N - 1}^+\), we have that
  \begin{equation}\label{eq:crone}
    c_r(f, g, \mathbf{1}) = \<f, g\>.
  \end{equation}
  Thus, by Proposition~\ref{prop:remainder} and the fact that \(\partial^f_X Y = 0\)
  for all \(f \in \CF_{N - 1}^+ \setminus \CT_{N - 1}\),
  \begin{equation}\label{eq:remain-Y-one}
    \delta Y_{st} - R^{\mathbf{Y}, \mathbf{1}}_{st}
    = \sum_{\tau \in \CT_{N - 1}^+} \partial^\tau_X Y_s \<\mathbf{X}_{st}, \tau\>.
  \end{equation}
  Hence, (setting \(\partial^f_X Y = 0\) for all \(f \in \CF\) with \(|f| \ge N\)) we observe that
  \begin{align*}
    \sum_{f \in \CF^+_{N - 1}}\partial^f_X \phi(\mathbf{Y}_s) \<\mathbf{X}_{st}, f\>
    = & \sum_{f \in \CF^+_{N - 1}}\sum_{\tau_1 \cdots \tau_{\# f} = f} \frac{1}{(\# f)!} D^{\# f}
      \phi(Y_s)\left[\bigotimes^{\# f}_{i = 1} \partial^{\tau_i}_X Y_s\right] \<\mathbf{X}_{st}, f\>\\
    = & \sum_{k = 1}^{N - 1} \frac{1}{k!} D^k \phi(Y_s)\left[\left(\sum_{\tau \in \CT^+_{N - 1}}
      \partial^{\tau}_X Y_s \<\mathbf{X}_{st}, \tau\> \right)^{\otimes k}\right]\\
    = & \sum_{k = 1}^{N - 1} \frac{1}{k!} D^k \phi(Y_s)\left[(\delta Y_{st} - R^{\mathbf{Y}, \mathbf{1}}_{st})^{\otimes k}\right].
  \end{align*}
  where the first equality is due to Proposition~\ref{prop:comp}. Thus,
  \begin{equs}
    R^{\phi(\mathbf{Y}), \mathbf{1}}_{st}
      & = \delta(\phi(Y))_{st} - \sum_{f \in \CF^+_{N - 1}}\partial^f_X \phi(\mathbf{Y}_s) \<\mathbf{X}_{st}, f\>\\
      & = \delta(\phi(Y))_{st} - \sum_{k = 1}^{N - 1} \frac{1}{k!} D^k \phi(Y_s)\left[(\delta Y_{st})^{\otimes k}\right] - R^\sharp_{st}
  \end{equs}
  with \(R^\sharp\) defined as in the statement of the proposition.
\end{proof}

The higher order remainder terms (i.e. \(R^{\phi(\mathbf{Y}), \tau}_{st}\) for \(\tau \in \CF_{N - 2}\)),
satisfy a similar relation. We state a reformulation of \cite[Corollary 3.9]{Bonnefoi:22}.
\begin{lemma}
  For any \(f \in \CF_{N - 1}\), we have that
  \[R^{\phi(\mathbf{Y}), f}_{st} = \delta(\partial^f_X \phi(\mathbf{Y}))_{st} -
    \sum_{k = 1}^{N - |f| - 1} \frac{1}{k!}D^k[\partial^f_X \phi(\mathbf{Y}_s)]
    \left[\left(\sum_{1 \le |\tau| \le N - k}\partial^\tau_X Y_s \<\mathbf{X}_{st}, \tau\>\right)^{\otimes k}\right].\]
\end{lemma}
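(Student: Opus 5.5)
The plan is to argue exactly as in the proof of the preceding proposition, now with a general base forest \(f\). By Proposition~\ref{prop:remainder} applied to \(\phi(\mathbf{Y})\),
\[R^{\phi(\mathbf{Y}), f}_{st} = \delta(\partial^f_X \phi(\mathbf{Y}))_{st} - \sum_{g, h} c_r(g, h, f)\, \partial^g_X \phi(\mathbf{Y}_s) \<\mathbf{X}_{st}, h\>,\]
so the claim reduces to the algebraic identity
\[\sum_{g, h} c_r(g, h, f)\, \partial^g_X \phi(\mathbf{Y}_s) \<\mathbf{X}_{st}, h\> = \sum_{k \ge 1} \frac{1}{k!} D^k[\partial^f_X \phi(\mathbf{Y}_s)]\Big[\Big(\sum_{\tau} \partial^\tau_X Y_s \<\mathbf{X}_{st}, \tau\>\Big)^{\otimes k}\Big],\]
truncated to \(|g| \le N-1\). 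Here \(\partial^f_X\phi\) is regarded, via coherence and iteration of \eqref{eq:deriv-comp-def}, as an explicit function of \(Y\), as in the example above.

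Step one is to identify the forests \(g\) with \(c_r(g,h,f) \neq 0\). By \eqref{eq:coproduct-sum-subtrees}, \(g\) contributes when \(f\) is obtained from \(g\) by removing a collection of cut pieces forming \(h\). It is cleanest to induct on \(|f|\) using the recursive definition of \(\Delta\). The induction rests on two facts. First, \(\Delta\) is multiplicative on forests. Second, \(\Delta[f]_a = [f]_a \otimes \mathbf{1} + (\mathrm{id}\otimes[-]_a)\Delta f\). Together with coherence, \(\partial^{[f]}_X Y = \partial^f_X \phi(\mathbf{Y})\), these reduce the problem to the base structure: the chain rule for \(D^{\#f}\phi(Y)\) contracted against the \(\partial^{\tau_i}_X Y\).

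Step two: at \(f=\mathbf{1}\) the statement is exactly the previous proposition, using \eqref{eq:crone}. For a general \(f\), I would show that the map
\[G:\; \psi \mapsto \sum_{g,h} c_r(g,h,f)\,\partial^g_X\psi(\mathbf{Y}_s)\<\mathbf{X}_{st},h\>\]
is a derivation-type operator. On the one hand, the coproduct terms in which cut pieces are attached to the vertices of \(f\) generate the Taylor expansion. On the other hand, \(\frac{d}{dY}\) of \(\partial^f_X\phi\) in direction \(\partial^\tau_X Y\) corresponds to grafting \(\tau\) onto a vertex of \(f\), using \(\partial^{[f']}Y = \partial^{f'}\phi\). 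Concretely:
\begin{itemize}
\item Differentiating the outer \(D^{\#f}\phi\) adds a new tree to the forest.
\item Differentiating an inner \(\partial^{\tau_i}_X Y = \partial^{f_i}_X\phi\) recursively grafts onto deeper vertices.
\end{itemize}
Thus \(D[\partial^f_X\phi](Y)[\partial^\tau_X Y] = \sum_g \<\tau \curvearrowright_t f\text{-type grafting}, g\>\cdot(\text{symmetry factors})\,\partial^g_X\phi\). Iterating \(k\) times and symmetrising gives the \(1/k!\) and the tensor power, since \(\mathbf{X}_{st}\) is a character, so the products \(\<\mathbf{X}_{st},\tau_1\cdots\tau_k\>\) factor.

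The main obstacle is the bookkeeping of the combinatorial factors. One must check that the symmetry factors \(\Pi\) in \eqref{eq:deriv-comp-alt} match the multiplicities \(c_r(g,h,f)\) and the \(1/k!\). This is why the paper cites \cite[Corollary 3.9]{Bonnefoi:22}, and I would lean on that argument, transcribing it into the present notation.

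Finally, the truncation \(|\tau|\le N-k\) and \(k\le N-|f|-1\) follow from \(|g| = |f|+\sum|\tau_i| \le N-1\), so that only terms with \(\partial^g\) defined survive.
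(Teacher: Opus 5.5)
The paper gives no argument for this lemma beyond citing \cite[Corollary 3.9]{Bonnefoi:22}. Your reduction is the standard route to that result: apply Proposition~\ref{prop:remainder} to $\phi(\mathbf Y)$, then match admissible cuts of $g$ with trunk $f$ against derivatives of $\partial^f_X\phi$ viewed as a function of $Y$. The combinatorial core is sound once stated precisely. For $h=\tau_1\cdots\tau_k$ you need
\[
\sum_g c_r(g,h,f)\,\partial^g_X\phi(\mathbf Y_s)=\frac{1}{\prod_\sigma h_\sigma!}\,D^k[\partial^f_X\phi(\mathbf Y_s)][\partial^{\tau_1}_XY_s\otimes\cdots\otimes\partial^{\tau_k}_XY_s],
\]
where $h_\sigma$ is the multiplicity of $\sigma$ in $h$. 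The $k$ derivatives must be taken in frozen directions, because each tree of $h$ is attached to a vertex of $[f]$ and never to another tree of $h$. The $1/k!$ then comes from the $k!/\prod_\sigma h_\sigma!$ orderings of $h$ together with multiplicativity of $\mathbf X_{st}$.

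The genuine problem is your last paragraph, where the implication runs the wrong way. The constraint $|f|+\sum_i|\tau_i|\le N-1$ does force $k\le N-|f|-1$ and $|\tau_i|\le N-|f|-k\le N-k$. But the converse fails: the $k$-th tensor power over all trees with $1\le|\tau|\le N-k$ also contains tuples with $|f|+\sum_i|\tau_i|\ge N$, and these have no counterpart in Proposition~\ref{prop:remainder}.

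Concretely, take $N=3$, $d=1$, $f=\bullet$. Only $k=1$ occurs, and the stated formula contains $-D[\partial^\bullet_X\phi](Y_s)[\partial^{[\bullet]}_XY_s]\,\langle\mathbf X_{st},[\bullet]\rangle$ with $\partial^{[\bullet]}_XY_s=D\phi(Y_s)\phi(Y_s)$. A cut with piece $[\bullet]$ and trunk $\bullet$ would need $|g|=3>N-1$. Indeed the paper's own Example gives exactly $R^{\phi(\mathbf Y),\bullet}_{st}=\delta\big((D\phi\,\phi)(Y)\big)_{st}-D(D\phi\,\phi)(Y_s)[\phi(Y_s)]\,\delta X_{st}$, without that term. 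For $f=\mathbf 1$ the same over-counting appears once $N\ge 4$ (e.g.\ $k=2$, $\tau_1=\tau_2=[\bullet]$), so quoting the preceding proposition does not give you an exact base case either.

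What your argument actually establishes is
\[
R^{\phi(\mathbf Y),f}_{st}=\delta(\partial^f_X\phi(\mathbf Y))_{st}-\sum_{k=1}^{N-|f|-1}\frac{1}{k!}\sum_{\substack{\tau_1,\dots,\tau_k\in\CT^+_{N-1}\\ |f|+|\tau_1|+\dots+|\tau_k|\le N-1}}D^k[\partial^f_X\phi(\mathbf Y_s)]\big[\partial^{\tau_1}_XY_s\otimes\cdots\otimes\partial^{\tau_k}_XY_s\big]\prod_{i=1}^k\langle\mathbf X_{st},\tau_i\rangle .
\]
The displayed statement differs from this by the terms with $|f|+\sum_i|\tau_i|\ge N$, each of which is $O(|t-s|^{(N-|f|)\gamma})$. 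So you must either prove this jointly truncated identity, or state the lemma explicitly modulo such terms, rather than assert that the index ranges coincide. Note that these terms have regularity only $(N-|f|)\gamma$, not $N\gamma$. They therefore cannot be absorbed into an $N\gamma$-order error when the lemma is fed into Corollary~\ref{cor:remain-phi-Y}.
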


Thus, as a direct consequence of Equation~\eqref{eq:remain-Y-one}, we have the following Taylor
expansion for higher order remainders.
\begin{corollary}\label{cor:remain-phi-Y}
  For any \(f \in \CF_{N - 1}\), we have that
  \[R^{\phi(\mathbf{Y}), f}_{st} = \delta(\partial^f_X \phi(\mathbf{Y}))_{st} -
    \sum_{k = 1}^{N - |f| - 1} \frac{1}{k!}D^k[\partial^f_X \phi(\mathbf{Y}_s)]
    \left[(\delta Y_{st})^{\otimes k}\right]
    - R^{\sharp, f}_{st}\]
  where \(R^{\sharp, f}_{st}\) has regularity \(\ge N\gamma\).
  More precisely,
  \begin{equation}\label{eq:R-sharp-def}
    R^{\sharp, f}_{st} =
    \sum_{k = 1}^{N - |f| - 1} \sum_{\substack{m_1 + m_2 + m_3 = k\\ m_1 \neq k}}
    \frac{(-1)^{m_2 + m_3}}{k!}\binom{k}{m_1, m_2, m_3} D^k[\partial^f_X \phi(\mathbf{Y}_s)][A_{st}^{m_1, m_2, m_3}],
  \end{equation}
  for which the \(m_1, m_2\) and \(m_3\) are summed over the non-negative integers and
  \begin{equation}\label{eq:A-def}
    A_{st}^{m_1, m_2, m_3} :=
    (\delta Y_{st})^{\otimes m_1} \otimes (R^{\mathbf{Y}, \mathbf{1}}_{st})^{\otimes m_2} \otimes
    \left(\sum_{\tau \in \CT_{N - 1} \setminus \CT_{N - k}} \partial^\tau_X Y_s \<\mathbf{X}_{st}, \tau\>\right)^{\otimes m_3}.
  \end{equation}
\end{corollary}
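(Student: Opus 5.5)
The plan is to start from the preceding lemma (the reformulation of \cite[Corollary 3.9]{Bonnefoi:22}) and to rewrite the inner sum in each $k$-th term using Equation~\eqref{eq:remain-Y-one}. Fix $k \in \{1, \dots, N - |f| - 1\}$. I split the full sum over trees as
\[\sum_{1 \le |\tau| \le N - k}\partial^\tau_X Y_s \<\mathbf{X}_{st}, \tau\> = \sum_{\tau \in \CT^+_{N - 1}}\partial^\tau_X Y_s \<\mathbf{X}_{st}, \tau\> - \sum_{\tau \in \CT_{N - 1} \setminus \CT_{N - k}}\partial^\tau_X Y_s \<\mathbf{X}_{st}, \tau\>.\]
By \eqref{eq:remain-Y-one}, the first sum on the right equals $\delta Y_{st} - R^{\mathbf{Y},\mathbf{1}}_{st}$. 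The inner sum therefore becomes $\delta Y_{st} - R^{\mathbf{Y},\mathbf{1}}_{st} - B_{st}$, where $B_{st}$ denotes the last sum above.

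Next I expand the $k$-fold tensor power $(\delta Y_{st} - R^{\mathbf{Y},\mathbf{1}}_{st} - B_{st})^{\otimes k}$ with the multinomial formula. This is legitimate because $D^k[\partial^f_X\phi(\mathbf{Y}_s)]$ is a symmetric $k$-linear form, so the ordering of the tensor factors is irrelevant. The expansion gives
\[\sum_{m_1+m_2+m_3=k}(-1)^{m_2+m_3}\binom{k}{m_1,m_2,m_3}A^{m_1,m_2,m_3}_{st},\]
with $A^{m_1,m_2,m_3}_{st}$ exactly as in \eqref{eq:A-def}. The term $m_1 = k$ is $(\delta Y_{st})^{\otimes k}$, and it produces the displayed Taylor sum. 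All remaining terms, after multiplication by $-\frac{1}{k!}D^k[\partial^f_X\phi(\mathbf{Y}_s)]$ and summation over $k$, assemble into $-R^{\sharp,f}_{st}$ as defined in \eqref{eq:R-sharp-def}. This proves the identity.

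It remains to verify the regularity claim $\ge N\gamma$, and this is the step I expect to need the most care, in particular matching it to the regularity relevant for the remainder of level $f$. I count regularities term by term:
\begin{itemize}
\item $\delta Y$ has regularity $\gamma$, since $Y$ is controlled.
\item $R^{\mathbf{Y},\mathbf{1}}$ has regularity $N\gamma$, by the definition of a controlled path.
\item Each summand of $B$ involves $\<\mathbf{X}_{st},\tau\>$ with $|\tau| > N-k$, so $B$ has regularity at least $(N-k+1)\gamma$, using boundedness of $\partial^\tau_X Y_s$ on $[0,T]$.
\end{itemize}
Since $m_1 \neq k$, every term contains at least one factor of $R^{\mathbf{Y},\mathbf{1}}$ or of $B$. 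If $m_2 \ge 1$, the term already has regularity $\ge N\gamma$. If instead $m_3 \ge 1$, the term has regularity at least $(N-k+1)\gamma + (k-1)\gamma = N\gamma$, because each of the remaining $k-1$ factors contributes at least $\gamma$.

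Finally, the coefficients $D^k[\partial^f_X\phi(\mathbf{Y}_s)]$ must be locally bounded. This follows from $\phi \in C^N$ together with the explicit formula \eqref{eq:deriv-comp-alt}: by coherence, $\partial^f_X\phi$ is a polynomial expression in derivatives of $\phi$ of order at most $|f|$, so the order $k$ with $k + |f| \le N-1$ is covered. This gives the stated bound on $R^{\sharp,f}$.
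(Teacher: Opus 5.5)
Your proposal is correct and follows the paper's route. The paper states the corollary as a direct consequence of substituting \eqref{eq:remain-Y-one} into the preceding lemma, which is exactly your splitting of the tree sum at $|\tau| = N-k$ followed by the multinomial expansion under the symmetric form $D^k[\partial^f_X\phi(\mathbf{Y}_s)]$; your term-by-term regularity count (using $m_2 \ge 1$ or $m_3 \ge 1$ since $m_1 \neq k$) supplies the details the paper leaves implicit.
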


\begin{lemma}\label{lem:remain-phi-Y}
  For any \(f \in \CF_{N - 1}\), we have the estimate
  \begin{equation}\label{eq:remain-phi-Y}
    \begin{split}
      &\ \|R^{\phi(\mathbf{Y}), f}\|_{(N - |f|)\gamma}\\
      \lesssim\ & \|R^{\sharp, f}\|_{N\gamma} |t - s|^{|f|\gamma}
      + \frac{|\CT_{N - 1}|^{N - |f| - 1}}{(N - |f|)!} C_{[s, t]}^f
        \|R^{\mathbf{Y}, \mathbf{1}}\|_{N\gamma}^{N - |f|} |t - s|^{(N - 1)(N - |f|)\gamma}\\
      & + \frac{|\CT_{N - 1}|^{N - |f| - 1}}{(N - |f|)!} C_{[s, t]}^f \sum_{\tau \in \CT^+_{N - 1}}
        (\|\partial^\tau_X Y\|_{\infty; [s, t]}\|\<\mathbf{X}, \tau\>\|_{|\tau|\gamma} |t - s|^{(|\tau| - 1)\gamma})^{(N - |f|)}.
    \end{split}
  \end{equation}
  where for any \(I \subseteq \mathbb{R}\), we have denoted
  \begin{equation}\label{eq:Cf-def}
    C_{I}^f := \sup_{\substack{u < v \in I\\ \lambda \in [0, 1]}}
    \|D^{N - |f|} [\partial^f_X \phi(\lambda \mathbf{Y}_u + (1 - \lambda) \mathbf{Y}_v)]\|.
  \end{equation}
\end{lemma}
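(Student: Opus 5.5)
The starting point is the decomposition of Corollary~\ref{cor:remain-phi-Y}. For fixed \(f \in \CF_{N-1}\) and \(s<t\) it writes \(R^{\phi(\mathbf{Y}),f}_{st}\) as the order-\((N-|f|-1)\) Taylor remainder of the function \(\partial^f_X\phi\) along the increment \(\delta Y_{st}\), minus \(R^{\sharp,f}_{st}\). I will bound the two pieces separately, divide by \(|t-s|^{(N-|f|)\gamma}\), and take the supremum. The \(R^{\sharp,f}\) piece is immediate. Since \(|R^{\sharp,f}_{st}| \le \|R^{\sharp,f}\|_{N\gamma}|t-s|^{N\gamma}\), dividing by \(|t-s|^{(N-|f|)\gamma}\) leaves exactly the factor \(|t-s|^{|f|\gamma}\), which is the first term on the right of \eqref{eq:remain-phi-Y}. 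Throughout, \(|t-s|\) should be read as the length of the interval on which the seminorm is taken; this is also the interval entering \(C^f_{[s,t]}\).

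For the Taylor piece I apply Taylor's theorem with Lagrange/integral remainder to \(\partial^f_X\phi\), viewed as a function of \(Y\) through coherence, along the segment between \(Y_s\) and \(Y_t\). This gives
\[\Big\|\delta(\partial^f_X\phi(\mathbf{Y}))_{st}-\sum_{k=1}^{N-|f|-1}\tfrac{1}{k!}D^k[\partial^f_X\phi(\mathbf{Y}_s)][(\delta Y_{st})^{\otimes k}]\Big\| \le \frac{C^f_{[s,t]}}{(N-|f|)!}\|\delta Y_{st}\|^{N-|f|},\]
where the supremum over \(\lambda\) and over \(u<v\) built into \eqref{eq:Cf-def} absorbs the intermediate point.

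It remains to control \(\|\delta Y_{st}\|^{N-|f|}\). By \eqref{eq:remain-Y-one},
\[\delta Y_{st}=R^{\mathbf{Y},\mathbf{1}}_{st}+\sum_{\tau\in\CT^+_{N-1}}\partial^\tau_X Y_s\langle\mathbf{X}_{st},\tau\rangle,\]
which is a sum of at most \(|\CT_{N-1}|\) terms, counting the remainder as one. Raising to the power \(p=N-|f|\), I use the elementary convexity inequality \((\sum_{i=1}^n a_i)^p\le n^{p-1}\sum_i a_i^p\). Here \(n\le|\CT_{N-1}|\), which produces the prefactor \(|\CT_{N-1}|^{N-|f|-1}\). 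The individual terms are bounded as follows.
\begin{itemize}
\item The remainder term contributes \(\|R^{\mathbf{Y},\mathbf{1}}\|_{N\gamma}^{p}|t-s|^{Np\gamma}\). After dividing by \(|t-s|^{p\gamma}\) this leaves \(|t-s|^{(N-1)p\gamma}\), which is the second term.
\item Each tree term is bounded by \(\|\partial^\tau_X Y\|_{\infty;[s,t]}\|\langle\mathbf{X},\tau\rangle\|_{|\tau|\gamma}|t-s|^{|\tau|\gamma}\). Distributing the normalisation \(|t-s|^{-\gamma}\) inside the \(p\)-th power leaves \(|t-s|^{(|\tau|-1)\gamma}\), which gives the third term.
\end{itemize}

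The main obstacle is bookkeeping rather than analysis. One point is justifying that \(\partial^f_X\phi(\mathbf{Y})\) genuinely is a \(C^{N-|f|}\) function of \(Y\) alone, so that the Taylor step is legitimate. This follows by iterating \eqref{eq:deriv-comp-def} together with coherence, as in the example computing \(\partial^{\tree<2'>}_X\phi(\mathbf{Y})\). Another point is that \(\#\CT^+_{N-1}+1\le|\CT_{N-1}|\), which holds because \(\CT_{N-1}\) contains \(\mathbf{1}\). The implicit constant in \(\lesssim\) then only absorbs the factor coming from splitting the remainder into the Taylor part and the \(R^{\sharp,f}\) part.
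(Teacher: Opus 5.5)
Your proposal is correct and matches the paper's proof: you apply Taylor's theorem to the decomposition in Corollary~\ref{cor:remain-phi-Y}, which gives $\frac{C^f_{[s,t]}}{(N-|f|)!}\|\delta Y_{st}\|^{N-|f|}$ plus the $R^{\sharp,f}$ term, and then you control the increment of $Y$ through \eqref{eq:remain-Y-one}. The only differences are cosmetic. You bound $\|\delta Y_{st}\|$ pointwise and spell out the convexity inequality that produces $|\CT_{N-1}|^{N-|f|-1}$, whereas the paper bounds $\|Y\|_{\gamma}$ on $[s,t]$ directly and leaves the raising to the power $N-|f|$ implicit.
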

\begin{proof}
  Applying the Taylor theorem to Corollary~\ref{cor:remain-phi-Y}, we have that
  \begin{align*}
    \|R^{\phi(\mathbf{Y}), f}\|_{(N - |f|)\gamma} \le &
    \frac{C_{[s, t]}^f}{(N - |f|)!} \|Y\|_{\gamma}^{N - |f|} + \|R^{\sharp, f}\|_{N\gamma} |t - s|^{|f|\gamma}
  \end{align*}
  Thus, we obtain the desired inequality as
  \[\|Y\|_{\gamma} \le \|R^{\mathbf{Y}, \mathbf{1}}\|_{N\gamma} |t - s|^{(N - 1)\gamma} + \sum_{\tau \in \CT^+_{N - 1}}
    \|\partial^\tau_X Y\|_{\infty; [s, t]}\|\<\mathbf{X}, \tau\>\|_{|\tau|\gamma} |t - s|^{(|\tau| - 1)\gamma}\]
  which follows by Equation~\eqref{eq:remain-Y-one}.
\end{proof}

\subsection{An a priori estimate of the derivative}\label{sec:apriori}
As we will need to control the coefficient arising from the Taylor estimate of Lemma~\ref{lem:remain-phi-Y}, 
we will in this section establish a useful bound on the derivatives \(\partial^f_X \phi(\mathbf{Y})\) 
in terms of estimates on \(\phi\) and its derivatives.

\begin{definition}
  For \(f \in \CF_N\) and \(k \in \mathbb{N} \cup \{0\}\), we define \(\Lambda_k(f)\) inductively by 
  \(\Lambda_k(\mathbf{1}) = 0\), and 
  \[\Lambda_k([f]) = \mathbf{1}_{\# f = k} + \Lambda_k(f) = \mathbf{1}_{\# f = k} + \sum_{\tau \in f} \Lambda_k(\tau).\]
  Namely, for any \(k\), \(\Lambda_k(f)\) counts the number of vertices in \(f\) with exactly \(k\) children.
\end{definition}

\begin{lemma}\label{lem:sum-leaves}
  For all \(f \in \CF_{N}\) and constant \(\beta\), we have that
  \begin{equation}\label{eq:sum-Lambda-eq}
    \sum_{k = 0}^{N - 1}\Lambda_k(f)(1 - \beta k) = (1 - \beta)|f| + \beta \# f.
  \end{equation}
\end{lemma}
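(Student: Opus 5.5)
Rearranged, \eqref{eq:sum-Lambda-eq} reads
\[\sum_{k}\Lambda_k(f) - \beta\sum_k k\Lambda_k(f) = (1-\beta)|f| + \beta\,\# f,\]
so it suffices to establish the two $\beta$-free identities
\[\sum_{k \ge 0}\Lambda_k(f) = |f|, \qquad \sum_{k \ge 0} k\,\Lambda_k(f) = |f| - \# f,\]
and then take the combination ``first minus $\beta$ times second''. Since vertices have at most $|f|-1 \le N-1$ children, $\Lambda_k(f)=0$ for $k \ge N$, so the sums over $k \ge 0$ coincide with the truncated sums $\sum_{k=0}^{N-1}$ in the statement.

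The first identity says every vertex has some number of children, so summing $\Lambda_k$ over $k$ counts each vertex once. The second is the edge count: $\sum_k k\Lambda_k(f)$ counts edges, i.e.\ parent--child pairs, and a forest with $|f|$ vertices and $\# f$ components has $|f| - \# f$ edges.

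To stay within the inductive definitions of the paper, I would prove both identities by structural induction on $|f|$. For the base case $f=\mathbf{1}$, all quantities vanish ($|\mathbf{1}|=\#\mathbf{1}=0$). For a general forest $f=\prod \tau_i$, the quantities $\Lambda_k$, $|\cdot|$ and $|\cdot|-\#\cdot$ are all additive over the trees of $f$, so it suffices to treat a single tree $\tau=[g]$ with $|g|=|\tau|-1$. By definition
\[\textstyle\sum_k \Lambda_k([g]) = 1 + \sum_k\Lambda_k(g) = 1 + |g| = |\tau|,\]
and
\[\textstyle\sum_k k\Lambda_k([g]) = \# g + \sum_k k\Lambda_k(g) = \# g + |g| - \# g = |g| = |\tau| - 1 = |\tau| - \#\tau,\]
using the induction hypothesis on $g$.

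No step is a real obstacle. The only point needing care is the truncation at $N-1$ and the convention for the empty tree, namely $\Lambda_k(\mathbf{1})=0$ and $\#\mathbf{1}=0$, which makes the base case consistent.
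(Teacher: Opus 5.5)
Your proof is correct and takes essentially the same route as the paper. The paper reduces to the single $\beta$-free identity $\sum_{k}\Lambda_k(f)(1-k)=\#f$, which is exactly the difference of your two identities, and proves it by structural induction on trees plus additivity over forests; it also mentions the Euler-formula edge count you give as intuition. Your version is slightly more explicit: you prove $\sum_k\Lambda_k(f)=|f|$ instead of taking it from the verbal description of $\Lambda_k$, and you justify truncating the sum at $k=N-1$.
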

\begin{proof}
  It suffices to show \(\sum_{k = 0}^{N - 1}\Lambda_k(f)(1 - k) = \# f\) since if this were the case,
  \[\sum_{k = 0}^{N - 1}\Lambda_k(f)(1 - \beta k) = 
    (1 - \beta)\sum_{k = 0}^{N - 1} \Lambda_k(f) + \sum_{k = 0}^{N - 1}\Lambda_k(f)(1 - k) 
    = (1 - \beta)|f| + \beta \# f\]
  as desired. 
  
  For this, we prove by induction on \(f \in \CT_{N}\). The equality is
  clear for the base case \(f = \mathbf{1}\). Thus, taking \(f = \prod_{i = 1}^m [h_i]\) and assuming \eqref{eq:sum-Lambda-eq}
  for each \([h_i]\), it remains to show \eqref{eq:sum-Lambda-eq} for \([f]\). Indeed, this is the case as 
  \begin{align*}
    \sum_{k = 0}^{N - 1} \Lambda_k([f])(1 - k)
    & = \sum_{k = 0}^{N - 1} \left(\mathbf{1}_{k = m} + \sum_{i = 1}^m \Lambda_k([h_i])\right)(1 - k)\\
    & = \sum_{k = 0}^{N - 1} \mathbf{1}_{k = m}(1 - k) 
      + \sum_{k = 0}^{N - 1}\sum_{i = 1}^m \Lambda_k([h_i])(1 - k) = (1 - m) + m = 1.
  \end{align*}
  Thus, for a general forest \(f\), we have 
  \[\sum_{k = 0}^{N - 1} \Lambda_k(f)(1 - k) = \sum_{k = 0}^{N - 1} \sum_{\tau \in f}\Lambda_k(\tau)(1 - k) = \# f\]
  as claimed. 
\end{proof}

We remark that, rather than the induction argument as presented above, the equality
\(\sum_{k = 0}^{N - 1}\Lambda_k(f)(1 - k) = \# f\) alternatively follows by Euler's formula for planar graphs,
namely, \(|\tau| - |E(\tau)| = 1\) for any tree \(\tau\) with \(E(\tau)\) being the set of edges in \(\tau\).

\begin{lemma}\label{cor:partial-bound2}
  Assuming \(\phi\) is such that \(\|D^k\phi(x)\| \le l(\|x\|)^{\alpha_k}\) for all \(k = 0, 1, \dots, N\), 
  we have for any \(f \in \CF_{N - 1}\), 
  \begin{equation}\label{eq:partial-bound2}
    \|\partial^f_X \phi(\mathbf{Y}_t)\| = \|\partial^{[f]}_X Y_t\|
      \le \frac{1}{\Pi(f)} l(\|Y_t\|)^{\sum_{k = 0}^{N - 1} \Lambda_k([f]) \alpha_k}.
  \end{equation}
  Moreover, if \(\alpha_k = (1 - \beta k)\alpha\) for some constants \(\alpha\) and \(\beta\), then 
  \begin{equation}\label{eq:pb2-deriv}
    \|D^n \partial^f_X \phi(\mathbf{Y}_t)\| \le \frac{(1 + |f|)^n}{\Pi(f)} 
      l(\|Y_t\|)^{((1 + |f|) - (n + |f|)\beta)\alpha}
  \end{equation}
  for any \(n \in \mathbb{N} \cup \{0\}\), \(n \le N - |f|\). 
\end{lemma}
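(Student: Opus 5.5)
We prove \eqref{eq:partial-bound2} by structural induction on \(f \in \CF_{N-1}\), using coherence together with the explicit formula \eqref{eq:deriv-comp-alt}. The equality \(\partial^f_X \phi(\mathbf{Y}) = \partial^{[f]}_X Y\) is simply the \(\phi\)-coherence assumption, so only the bound needs proof. The base case \(f = \mathbf{1}\) holds because \(\|\partial^{\mathbf{1}}_X\phi(\mathbf{Y}_t)\| = \|\phi(Y_t)\| \le l(\|Y_t\|)^{\alpha_0}\) and \(\Lambda_0([\mathbf{1}]) = 1\), with all other \(\Lambda_k([\mathbf{1}])\) equal to zero.

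For the inductive step, write \(f = \prod_{\tau \in \CT} \tau^{f_\tau}\) with \(\#f = m \ge 1\). Each \(\tau \in f\) has the form \(\tau = [g_\tau]\) with \(|g_\tau| < |f|\), so the induction hypothesis and coherence give \(\|\partial^\tau_X Y_t\| \le \Pi(g_\tau)^{-1} l(\|Y_t\|)^{\sum_k \Lambda_k(\tau)\alpha_k}\). Inserting this into \eqref{eq:deriv-comp-alt}, and using \(\|D^m\phi(Y_t)\| \le l^{\alpha_m}\) together with multilinearity, yields
\[\frac{\prod_{\tau}\Pi(\tau)^{f_\tau}}{\Pi(f)} \cdot \frac{1}{\prod_\tau \Pi(\tau)^{f_\tau}}\, l^{\alpha_m + \sum_{\tau \in f} f_\tau\sum_k \Lambda_k(\tau)\alpha_k},\]
since \(\Pi(\tau) = \Pi(g_\tau)\). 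The symmetry factors cancel to \(1/\Pi(f)\). The exponent equals \(\sum_k \Lambda_k([f])\alpha_k\), because \(\Lambda_k([f]) = \mathbf{1}_{m = k} + \sum_{\tau} f_\tau \Lambda_k(\tau)\). The main bookkeeping point is that the product in \eqref{eq:deriv-comp-alt} runs over trees with multiplicity, and that \(\Pi([f]) = \Pi(f)\) is exactly what makes the cancellation work. We also implicitly take \(l \ge 1\), or else read the powers formally. This is harmless because the statement already presumes the bound holds pointwise.

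For \eqref{eq:pb2-deriv}, first note that with \(\alpha_k = (1-\beta k)\alpha\), Lemma~\ref{lem:sum-leaves} applied to \([f]\) (where \(\#[f] = 1\) and \(|[f]| = 1 + |f|\)) gives the exponent \(((1-\beta)(1+|f|) + \beta)\alpha = ((1+|f|) - |f|\beta)\alpha\). This already settles the case \(n = 0\).

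For \(n \ge 1\), we view \(\partial^f_X\phi\) as the explicit function of \(Y\) obtained by iterating coherence. It is a sum, weighted by the coefficients above, of products of \(|[f]| = 1+|f|\) factors, namely one factor \(D^{k_v}\phi(Y)\) for each vertex \(v\) with \(k_v\) children. Differentiating \(n\) times with the Leibniz rule distributes the \(n\) derivatives among the \(1+|f|\) factors, producing at most \((1+|f|)^n\) terms. In each term, the vertex factors \(D^{k_v}\phi\) become \(D^{k_v + j_v}\phi\) with \(\sum_v j_v = n\). The resulting exponent is
\[\sum_v (1 - \beta(k_v + j_v))\alpha = \bigl((1+|f|) - |f|\beta\bigr)\alpha - n\beta\alpha,\]
which matches \eqref{eq:pb2-deriv}. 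The constraint \(k_v + j_v \le N\) holds because \(n \le N - |f|\) and \(k_v \le |f|\), so the hypothesis on \(D^k\phi\) applies to every factor.

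The delicate step is justifying this monomial structure: each vertex must contribute exactly one factor, and the combinatorial prefactor must remain bounded by \(1/\Pi(f)\) after differentiation. We handle this by proving, alongside the induction above, that \(\partial^f_X\phi\) equals \(\Pi(f)^{-1}\) times a single elementary-differential expression indexed by the tree \([f]\), as in the Butcher series. The Leibniz count then yields the factor \((1+|f|)^n\) directly.
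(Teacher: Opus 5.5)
Your proof is correct. Your argument for \eqref{eq:partial-bound2} is the paper's own: structural induction through \eqref{eq:deriv-comp-alt}, with the symmetry factors cancelling because $\Pi([h])=\Pi(h)$.

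For \eqref{eq:pb2-deriv} you organise the computation differently from the paper.

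The paper stays recursive. It applies the Leibniz rule only at the root, to $D^n\bigl(D^k\phi(Y)[(\partial^{h_i}_X\phi)_{i=1}^k]\bigr)$. It bounds each $D^{m_i}\partial^{h_i}_X\phi$ by the inductive hypothesis, which already carries the factor $(1+|h_i|)^{m_i}$. The constant $(1+|f|)^n$ then comes from the multinomial theorem together with $1+\sum_i(1+|h_i|)=1+|f|$. The range checks needed there are $k+m_0\le N$ and $m_i\le N-|h_i|$.

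You instead upgrade the first induction to the exact identity $\partial^f_X\phi=\Pi(f)^{-1}F([f])$, where $F$ is the elementary differential of the tree $[f]$. This identity does hold, by the same computation with equalities in place of norms. You then differentiate the flattened nested product all at once. In this form, $(1+|f|)^n$ is simply the number of ways to assign each of the $n$ derivatives to one of the $|[f]|$ vertex factors. The exponent is read off vertex by vertex via Lemma~\ref{lem:sum-leaves}, and the range check becomes $k_v+j_v\le N$ at each vertex.

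Your version makes the origin of both the constant and the exponent transparent, and it gives an exact structural formula as a by-product. The paper's version never has to formalise the elementary-differential representation and only ever manipulates bounds.

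One minor remark: your caveat about needing $l\ge 1$ is unnecessary. Multiplying bounds of the form $l^a$ and $l^b$ into $l^{a+b}$ is valid for any $l>0$, whatever the signs of the exponents.
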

\begin{proof}
  We will first show \eqref{eq:partial-bound2} by inducting on \(f\).
  Clearly, the estimate holds for 
  \(f = \mathbf{1}\) and so, taking \(f = \prod_{i = 1}^k [h_i] \in \CF^+_{N - 1}\), it suffices to show that 
  \eqref{eq:partial-bound2} holds for \(f\) assuming that it holds for each \(h_i\). Indeed, we compute
  \begin{align*}
    \|\partial^f_X \phi(\mathbf{Y}_t)\|
    & = \left(\prod_{i = 1}^k\frac{\Pi(h_i)}{\Pi(f)}\right) \|D^k \phi(Y_t)[(\partial^{h_i}_X \phi(\mathbf{Y}_t))_{i = 1}^k]\|\\ 
    & \le \left(\prod_{i = 1}^k\frac{\Pi(h_i)}{\Pi(f)}\right) l(\|Y_t\|)^{\alpha_k} 
      \prod_{i = 1}^k \frac{1}{\Pi(h_i)}l(\|Y_t\|)^{\sum_{j = 0}^{N - 1} \Lambda_j([h_i]) \alpha_j} \\
    & = \frac{1}{\Pi(f)} l(\|Y_t\|)^{\sum_{j = 0}^{N - 1} \Lambda_j([f]) \alpha_j}
  \end{align*}
  as required.

  Now, assuming \(\alpha_k = (1 - \beta k)\alpha\), we show 
  \begin{equation}
    \|D^n \partial^f_X \phi(\mathbf{Y}_t)\| \le \frac{(1 + |f|)^n}{\Pi(f)} 
      l(\|Y_t\|)^{-\alpha\beta n + \sum_{j = 0}^{N - 1} \Lambda_j([f]) \alpha_j}
  \end{equation}
  from which we obtain the desired estimate by Lemma~\ref{lem:sum-leaves}.
  For this, we induct on \(n\). 
  The base case \(n = 0\) is precisely 
  \eqref{eq:partial-bound2} and thus, it suffices to consider \(n \ge 1\) assuming the relevant inductive 
  hypothesis. Again, the estimate is trivial for \(f = \mathbf{1}\) and so, we may take 
  \(f = \prod_{i = 1}^k [h_i] \in \CF^+_{N - 1}\). We observe 
  \begin{align*}
    & \|D^n \partial^f_X \phi(\mathbf{Y}_t)\|
      = \left(\prod_{i = 1}^k\frac{\Pi(h_i)}{\Pi(f)}\right) \left\|D^n\left[D^k\phi(Y_t)[(\partial^{h_i}_X\phi(\mathbf{Y}_t))_{i = 1}^k]\right]\right\| \\
    \le\ & \left(\prod_{i = 1}^k\frac{\Pi(h_i)}{\Pi(f)}\right) \sum_{m_0 + \cdots + m_k = n} \binom{n}{m_0, \dots, m_k}
      \|D^{k + m_0} \phi(Y_t)\| \prod_{i = 1}^k \|D^{m_i}\partial^{h_i}_X \phi(\mathbf{Y}_t)\| \\
    \le\ & \left(\prod_{i = 1}^k\frac{\Pi(h_i)}{\Pi(f)}\right) \sum_{m_0 + \cdots + m_k = n} \binom{n}{m_0, \dots, m_k}
        l(\|Y_t\|)^{(1 - \beta(k + m_0))\alpha} \\
        & \hspace{4cm} 
        \prod_{i = 1}^k \frac{(1 + |h_i|)^{m_i}}{\Pi(h_i)} l(\|Y_t\|)^{-\alpha\beta m_i + \sum_{j = 0}^{N - 1} \Lambda_j([h_i]) \alpha_j} \\
    =\ & \frac{1}{\Pi(f)} \sum_{m_0 + \cdots + m_k = n} \binom{n}{m_0, \dots, m_k} \prod_{i = 1}^k (1 + |h_i|)^{m_i}\\
      & \hspace{4cm} l(\|Y_t\|)^{\sum_{j = 0}^{N - 1} 
        \left(\mathbf{1}_{j = k} + \sum_{i = 1}^k \Lambda_j([h_i])\right)\alpha_j - \alpha\beta \sum_{i = 0}^k m_i}\\
    =\ & \frac{1}{\Pi(f)} l(\|Y_t\|)^{-\alpha\beta n + \sum_{j = 0}^{N - 1} \Lambda_j([f]) \alpha_j} 
      \left(1 + \sum_{i = 1}^k (1 + |h_i|)\right)^n
  \end{align*}
  where we had used the inductive hypothesis to arrive at the third line. Thus, we arrive at the 
  desired inequality by observing that 
  \(\sum_{i = 1}^k (1 + |h_i|) = \sum_{i = 1}^k |[h_i]| = |f|\). 
\end{proof}

\section{Non-explosion of branched RDEs}

\subsection{Branched RDEs with drift and unbounded derivatives}\label{sec:non-exp}
From this point forward, we consider branched RDEs with drift of the form 
\begin{equation}
  \dd \mathbf{Y}_t = b(\mathbf{Y}_t) \dd t + \phi(\mathbf{Y}_t) \dd \mathbf{X}_t.
\end{equation}
Here \(b(\mathbf{Y}_t) := b(Y_t)\) and we denote the rough integral 
\(\int_0^t \phi(\mathbf{Y}_s) \dd \mathbf{X}_s =: \mathbf{Z}_t\) which is also a controlled rough path 
against \(\mathbf{X}\). More precisely, an \(\mathbf{X}\)-controlled rough path 
\(\mathbf{Y} : [0, T] \to \CH_{N - 1}\) is said to be a solution of the branched RDE \eqref{eq:bRDE} if
\begin{itemize}
  \item \(\partial^{\mathbf{1}}_X \mathbf{Y}_t = Y_t = Y_0 + \int_0^t b(Y_s) \dd s + \int_0^t \phi(Y_s) \dd \mathbf{X}_s\),
  \item and for all \(f \in \CF^+_{N - 1}\), \(\partial^f_X \mathbf{Y}_t = \partial^f_X \mathbf{Z}_t\).
\end{itemize}
As remarked before, \(\mathbf{Y}\) is automatically \(\phi\)-coherent and moreover, 
\begin{equation}\label{eq:remain-Y-one-Z}
  R^{\mathbf{Y}, \mathbf{1}}_{st} = R^{\mathbf{Z}, \mathbf{1}}_{st} + B^\sharp_{st}
\end{equation}
where we denote \(B^\sharp_{st} := \int_s^t b(\mathbf{Y}_r) \dd r\). Moreover, by the sewing lemma, 
we have the following estimate for \(R^{\mathbf{Z}, \mathbf{1}}\).
\begin{lemma}\label{lem:remain-Z-one}
  Denoting \(\mathbf{Z}\) as above, we have 
  \begin{equation}\label{eq:remain-Z-one}
    \begin{split}
      \|R^{\mathbf{Z}, \mathbf{1}}\|_{N\gamma; [s, t]} 
      \lesssim & \sum_{f \in \CF_{N - 1}}\|\<\mathbf{X}, [f]\>\|_{(|f| + 1)\gamma}|t - s|^{\gamma} 
          \bigg(\|R^{\sharp, f}\|_{N\gamma; [s, t]}
          |t - s|^{|f|\gamma} \\
      &\hspace{1cm} + C_{[s, t]}^f
          \left(\|R^{\mathbf{Z}, \mathbf{1}}\|_{N\gamma; [s, t]}^{N - |f|} + \|B^\sharp\|_{N\gamma; [s, t]}^{N - |f|}\right) 
          |t - s|^{(N - 1)(N - |f|)\gamma} \\
      &\hspace{1cm} + C_{[s, t]}^f 
          \sum_{\tau \in \CT^+_{N - 1}} 
          \left(\|\partial^\tau_X Y\|_{\infty; [s, t]}\|\<\mathbf{X}, \tau\>\|_{|\tau|\gamma} 
            |t - s|^{(|\tau| - 1)\gamma}\right)^{(N - |f|)}\bigg)\\
      & + \sum_{|f| = N - 1} \|\partial^f_X \phi(\mathbf{Y})\|_{\infty; [s, t]} \|\<\mathbf{X}, [f]\>\|_{N\gamma}.
    \end{split}
  \end{equation}
\end{lemma}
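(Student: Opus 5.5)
The plan is to combine the sewing remainder estimate for the rough integral (Proposition~\ref{prop:rough-integral}) with the Taylor-type estimate of Lemma~\ref{lem:remain-phi-Y}, applied to the controlled path \(\phi(\mathbf{Y})\). Write
\[R^{\mathbf{Z}, \mathbf{1}}_{uv} = \delta Z_{uv} - \sum_{f \in \CF^+_{N - 1}} \partial^f_X \mathbf{Z}_u \<\mathbf{X}_{uv}, f\>.\]
By the definition of the lift of the integral, the only nonzero Gubinelli derivatives of \(\mathbf{Z}\) are those indexed by trees \([f]\), with \(\partial^{[f]}_X \mathbf{Z} = \partial^f_X \phi(\mathbf{Y})\), and only \(|[f]| \le N - 1\) (i.e.\ \(|f| \le N - 2\)) appears. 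Comparing with the germ \(I_{uv} = \sum_{f \in \CF_{N - 1}} \partial^f_X\phi(\mathbf{Y}_u)\<\mathbf{X}_{uv},[f]\>\), we get
\[R^{\mathbf{Z},\mathbf{1}}_{uv} = \Big(\int_u^v \phi(\mathbf{Y}_r)\,\dd\mathbf{X}_r - I_{uv}\Big) + \sum_{|f| = N-1} \partial^f_X\phi(\mathbf{Y}_u)\<\mathbf{X}_{uv},[f]\>.\]
The second sum gives exactly the last line of \eqref{eq:remain-Z-one} after dividing by \(|v-u|^{N\gamma}\).

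For the first term, the sewing lemma on \([s,t]\) bounds it by \(\sum_f \|R^{\phi(\mathbf{Y}),f}\|_{(N-|f|)\gamma;[s,t]}\|\<\mathbf{X},[f]\>\|_{(|f|+1)\gamma}|v-u|^{(N+1)\gamma}\). Dividing by \(|v-u|^{N\gamma}\) and bounding \(|v-u|^\gamma \le |t-s|^\gamma\) produces the prefactor \(|t-s|^\gamma\). Here I would check that the sewing constant only involves \(\delta I\), computed via Chen's relation and Proposition~\ref{prop:remainder} applied to \(\phi(\mathbf{Y})\) as in Proposition~\ref{prop:rough-integral}. That proposition must be used in its localized form on the interval \([s,t]\).

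Next, insert Lemma~\ref{lem:remain-phi-Y} for each \(\|R^{\phi(\mathbf{Y}),f}\|_{(N-|f|)\gamma;[s,t]}\); this is legitimate since \(\mathbf{Y}\) is \(\phi\)-coherent. This yields the \(R^{\sharp,f}\) term, the \(C^f_{[s,t]}\|R^{\mathbf{Y},\mathbf{1}}\|^{N-|f|}_{N\gamma}\) term, and the sum over trees \(\tau\), with the combinatorial constants absorbed into \(\lesssim\). Finally, use \eqref{eq:remain-Y-one-Z} and the triangle inequality to write
\[\|R^{\mathbf{Y},\mathbf{1}}\|^{N-|f|}_{N\gamma} \le 2^{N-|f|-1}\Big(\|R^{\mathbf{Z},\mathbf{1}}\|^{N-|f|}_{N\gamma} + \|B^\sharp\|^{N-|f|}_{N\gamma}\Big),\]
which gives the stated form.

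The main obstacle is bookkeeping in the first step: correctly identifying which terms of \(\<\mathbf{X}_{uv}, [f]\>\) are included in the controlled expansion of \(\mathbf{Z}\) versus the sewing germ. The top-level trees \(|f| = N-1\) are not Gubinelli derivatives of \(\mathbf{Z}\), and that mismatch is exactly the extra sup-norm term. A secondary point is making sure the Hölder norms in Lemma~\ref{lem:remain-phi-Y} are taken on \([s,t]\) consistently; this is harmless since all estimates there are local.
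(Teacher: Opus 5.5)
Your proposal is correct and follows the paper's proof essentially step for step. You split off the top-level terms \(|f| = N-1\) from the sewing germ, apply Proposition~\ref{prop:rough-integral} and divide by \(|t-s|^{N\gamma}\), then bound each \(\|R^{\phi(\mathbf{Y}),f}\|_{(N-|f|)\gamma;[s,t]}\) via Lemma~\ref{lem:remain-phi-Y}. Your explicit use of \eqref{eq:remain-Y-one-Z} together with \((a+b)^p \le 2^{p-1}(a^p+b^p)\), to pass from \(\|R^{\mathbf{Y},\mathbf{1}}\|_{N\gamma}\) to \(\|R^{\mathbf{Z},\mathbf{1}}\|_{N\gamma} + \|B^\sharp\|_{N\gamma}\), fills in a step the paper leaves implicit.
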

\begin{proof}
  Since \(\partial^f_X Z = 0\) for all \(\# f \ge 2\), we have that
  \begin{align*}
    R^{\mathbf{Z}, \mathbf{1}}_{st} & = \int_s^t \phi(\mathbf{Y}_r) \dd \mathbf{X}_r - \sum_{\tau \in \CT_{N - 1}^+} 
      \partial^\tau_X Z_s \<\mathbf{X}_{st}, \tau\>\\
    & = \int_s^t \phi(\mathbf{Y}_r) \dd \mathbf{X}_r - \sum_{f \in \CF_{N - 2}} 
    \partial^f_X \phi(\mathbf{Y}_s) \<\mathbf{X}_{st}, [f]\>.
  \end{align*}
  Consequently, by the estimate from the sewing lemma~\ref{prop:rough-integral}, we obtain
  \begin{align*}
    \|R^{\mathbf{Z}, \mathbf{1}}_{st}\| \le\ & 
    \left\|\int_s^t \phi(\mathbf{Y}_r) \dd \mathbf{X}_r - \sum_{f \in \CF_{N - 1}} 
    \partial^f_X \phi(\mathbf{Y}_s) \<\mathbf{X}_{st}, [f]\>\right\|
    + \left\|\sum_{|f| = N - 1} \partial^f_X \phi(\mathbf{Y}_s) \<\mathbf{X}_{st}, [f]\>\right\|\\
    \lesssim\ & |t - s|^{(N + 1)\gamma}\sum_{f \in \CF_{N - 1}} \|R^{\phi(\mathbf{Y}), f}\|_{(N - |f|)\gamma; [s, t]}
      \|\<\mathbf{X}, [f]\>\|_{(|f| + 1)\gamma} \\
    & + |t - s|^{N\gamma}\sum_{|f| = N - 1} \|\partial^f_X \phi(\mathbf{Y})\|_{\infty; [s, t]} \|\<\mathbf{X}, [f]\>\|_{N\gamma}. 
  \end{align*}
  Thus, dividing both sides by \(|t - s|^{N \gamma}\), we obtain
  \begin{align*}
    \|R^{\mathbf{Z}, \mathbf{1}}\|_{N\gamma; [s, t]} 
      \lesssim\ & |t - s|^{\gamma}\sum_{f \in \CF_{N - 1}} \|R^{\phi(\mathbf{Y}), f}\|_{(N - |f|)\gamma; [s, t]}
      \|\<\mathbf{X}, [f]\>\|_{(|f| + 1)\gamma} \\
    & + \sum_{|f| = N - 1} \|\partial^f_X \phi(\mathbf{Y})\|_{\infty; [s, t]} \|\<\mathbf{X}, [f]\>\|_{N\gamma}.
  \end{align*}
  Finally, by estimating \(\|R^{\phi(\mathbf{Y}), f}\|_{(N - |f|)\gamma; [s, t]}\) by Equation~\eqref{eq:remain-phi-Y},
  we obtain the desired result.
\end{proof}

For the remainder of this section, we impose the following growth assumption on the coefficients of the branched RDE.
\begin{assumption}\label{as:growth}
  Assume that \(b\) and \(\phi\) are such that for all \(x \in \mathbb{R}^d\),
  \begin{enumerate}[label={(R.\arabic*)}]
    \item\label{eq:kappa_b} \(\|b(x)\| \le l(\|x\|)^{1 + \kappa \gamma}\),
    \item\label{eq:kappa_n} \(\|D^n \phi(x)\| \le l(\|x\|)^{(\theta - n \kappa) \gamma}\) for \(n = 0, \dots, N\)
  \end{enumerate}
  for some non-decreasing function \(l : \mathbb{R}_+ \to \mathbb{R}_+\), \(\theta \in [0, 1)\) and \(\kappa \in [0, \frac{1}{N})\).
\end{assumption}
For all intents and purposes, one may think of \(l(r) = c(1 + r)\) for some \(c > 0\). However, as this 
is not necessary for most of the a priori estimates, we keep the more general form of \(l\) within this assumption.
We also note that for non-explosion, we will require that \(b\) satisfies a growth condition of the form 
\(\<b(x), x\> \le c(1 + \|x\|^2)\) for some \(c > 0\). This assumption is unsurprising as it is necessary 
even for the case of ODEs. 

For all \(r > 0\), we recall that we denote \(\tau^r = \inf \{t \ge 0 : \|Y_t\| \ge r\}\). Moreover, 
let us define
\begin{equation}\label{eq:BR}
  B_r = \{t \in [0, T] : \|Y_t\| \le r\}
\end{equation}
and \(I = I^{k, R}_{\le \epsilon}\) for some interval of length less than or equal to \(\epsilon\) and 
contained in \(B_{R(k + 1)}\) for some \(k \in \mathbb{N}, \epsilon, R > 0\) to be specified 
(in particular, we will specialize 
\(I = [\tau^{Rk}, (\tau^{Rk} + \epsilon) \wedge \tau^{R(k + 1)}]\) in the proof of Theorem~\ref{thm:non-explosion}).
As a direct consequence of Lemma~\ref{cor:partial-bound2}, we have the following estimate.

\begin{corollary}\label{cor:partial-bound}
  Suppose that \ref{eq:kappa_n} holds for all \(n = 0, \dots, N\). For all \(n + |f| \le N\), we have that
  \begin{equation}\label{eq:deriv-partial}
    \|D^n [\partial^f_X \phi(\mathbf{Y})]\|_{\infty; B_r} \lesssim l(r)^{((|f| + 1)\theta - (|f| + n)\kappa)\gamma}.
  \end{equation}
  Consequently, \(C_{I}^f \lesssim l(R(k + 1))^{((|f| + 1)\theta - N\kappa)\gamma}\)
  and for all \(\tau \in \CT^+_{N - 1}\), 
  \[\|\partial^\tau_X Y\|_{\infty; I} \lesssim l(R(k + 1))^{(|\tau|\theta - (|\tau| - 1)\kappa)\gamma}.\]
\end{corollary}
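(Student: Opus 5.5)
The plan is to specialise Lemma~\ref{cor:partial-bound2} to the exponents in Assumption~\ref{as:growth}. Condition \ref{eq:kappa_n} has the form \(\|D^k\phi(x)\| \le l(\|x\|)^{\alpha_k}\) with \(\alpha_k = (\theta - k\kappa)\gamma\). When \(\theta > 0\) we can write this as \(\alpha_k = (1 - \beta k)\alpha\) with \(\alpha = \theta\gamma\) and \(\beta = \kappa/\theta\). Estimate \eqref{eq:pb2-deriv} then gives, at every time \(t\),
\[\|D^n \partial^f_X \phi(\mathbf{Y}_t)\| \le \frac{(1 + |f|)^n}{\Pi(f)}\, l(\|Y_t\|)^{((1 + |f|)\theta - (n + |f|)\kappa)\gamma},\]
since \(((1+|f|) - (n+|f|)\beta)\alpha = ((1+|f|)\theta - (n+|f|)\kappa)\gamma\). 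The prefactor \((1+|f|)^n/\Pi(f)\) is bounded by a constant depending only on \(N\), so it is absorbed into \(\lesssim\). The degenerate case \(\theta = 0\) avoids division by \(\theta\): I would rerun the induction in the proof of Lemma~\ref{cor:partial-bound2} directly with \(\alpha_k = (\theta - k\kappa)\gamma\). The exponent bookkeeping is linear in the \(\alpha_k\), and Lemma~\ref{lem:sum-leaves} gives \(\sum_k \Lambda_k([f])(\theta - k\kappa) = (|f|+1)\theta - |f|\kappa\), because \([f]\) has \(|f|+1\) vertices and \(\#[f] = 1\). Each derivative \(D^n\) lowers the exponent by \(n\kappa\gamma\), exactly as in that proof.

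To pass from a pointwise bound to \eqref{eq:deriv-partial}, take the supremum over \(t \in B_r\). On \(B_r\) we have \(\|Y_t\| \le r\) and \(l\) is non-decreasing, so \(l(\|Y_t\|)^{e} \le l(r)^{e}\) provided the exponent \(e\) is non-negative. This sign condition is the one point needing care. We have \(e = ((|f|+1)\theta - (|f|+n)\kappa)\gamma\) with \(|f| + n \le N\) and \(\kappa < 1/N\), so \((|f|+n)\kappa < 1\). The exponent can still be negative, for instance when \(\theta = 0\) and \(n \ge 1\). There are two ways to handle this. One is to read \(l\) as bounded below by \(1\), which is the intended model \(l(r) = c(1+r)\) with \(c \ge 1\). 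The other is to use \(l(\|Y_t\|) \ge l(0)\) and absorb the resulting factor into the implicit constant. I would state explicitly that \(l \ge 1\) is assumed, since this is harmless for the growth conditions.

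The consequences follow by specialisation. For \(C_I^f\) in \eqref{eq:Cf-def}, the relevant derivative order is \(n = N - |f|\), which gives exponent \(((|f|+1)\theta - N\kappa)\gamma\). The supremum in \eqref{eq:Cf-def} is taken at interpolated points \(\lambda Y_u + (1-\lambda)Y_v\) with \(u, v \in I \subseteq B_{R(k+1)}\). By convexity of the norm ball these points also have norm at most \(R(k+1)\), so the pointwise bound (as a function of the point) applies there with \(r = R(k+1)\). For \(\partial^\tau_X Y\) with \(\tau = [f]\), coherence gives \(\partial^\tau_X Y = \partial^f_X \phi(\mathbf{Y})\). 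Taking \(n = 0\) and \(|f| = |\tau| - 1\) yields exponent \((|\tau|\theta - (|\tau|-1)\kappa)\gamma\), which is the final claim. The main obstacle is therefore not analytic: it is making sure the interpolation points stay in the ball, and that the exponent-sign and \(\theta = 0\) issues are treated uniformly.
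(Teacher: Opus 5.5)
Your main line is the paper's proof: the paper simply specialises Lemma~\ref{cor:partial-bound2} with \(\alpha=\theta\gamma\), \(\beta=\kappa/\theta\). Two of your additions are correct and cover points the paper's one-line proof skips. The first is rerunning the induction when \(\theta=0\), where \(\alpha_k=-k\kappa\gamma\) is not of the form \((1-\beta k)\alpha\). The second is the convexity remark for the interpolated points in \(C_I^f\).

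The step that fails is your treatment of a negative exponent \(e=((|f|+1)\theta-(|f|+n)\kappa)\gamma\). If \(e<0\), then \(l(\|Y_t\|)\le l(r)\) gives \(l(\|Y_t\|)^{e}\ge l(r)^{e}\), which is the wrong direction. Assuming \(l\ge 1\) only yields \(l(\|Y_t\|)^{e}\le 1\). Using \(l(\|Y_t\|)\ge l(0)\) only yields \(l(\|Y_t\|)^{e}\le l(0)^{e}\). The target \(l(r)^{e}\) tends to \(0\) whenever \(l\) is unbounded, so neither constant can be absorbed into an implicit constant that must be uniform in \(r\); the corollary is used with \(r=R(k+1)\) and \(k\to\infty\).

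No argument can repair this, because the estimate over all of \(B_r\) is then false. Take \(N=2\), \(m=d=1\), \(\theta=0\), \(\kappa\in(0,\tfrac12)\), \(l(r)=1+r\), \(b=0\), and \(\phi=\tfrac18\arctan\), which satisfies \ref{eq:kappa_n}. Let \(Y_0=0\), \(n=1\) and \(f=\mathbf{1}\). Then \(0\in B_r\) for every \(r\), so the left side of \eqref{eq:deriv-partial} is at least \(|\phi'(0)|=\tfrac18\). The right side is \((1+r)^{-\kappa\gamma}\to 0\).

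The paper's proof passes over this silently as well. The accurate statement is that \eqref{eq:deriv-partial} holds whenever its exponent is non-negative. For all \(n+|f|\le N\) at once, this is exactly the condition \(\theta\ge N\kappa\). Without it, you only get the bound with \(e\) replaced by \(e\vee 0\) (using \(l\ge1\)). Alternatively, you can localise to times where \(\|Y_t\|\) is comparable to \(r\) and use a doubling property of \(l\). The same caveat applies to your two consequences, since their exponents \(((|f|+1)\theta-N\kappa)\gamma\) and \((|\tau|\theta-(|\tau|-1)\kappa)\gamma\) can also be negative.
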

\begin{proof}
  Equation~\eqref{eq:deriv-partial} follows directly from Lemma~\ref{cor:partial-bound2} in which we took 
  \(\alpha = \theta \gamma\) and \(\beta = \frac{\kappa}{\theta}\). 
\end{proof}

We now proceed to estimate the term \(\|R^{\sharp, f}\|_{N\gamma; [s, t]}|t - s|^{(|f| + 1)\gamma}\) 
arising in Lemma~\ref{lem:remain-Z-one}.

\begin{lemma}\label{lem:R-sharp-bd}
  Assuming Assumption~\ref{as:growth} and setting 
  \begin{equation}\label{eq:h-def}
    h(\epsilon) := \epsilon^{(N - 1 + \theta)\gamma}\|R^{\mathbf{Z}, \mathbf{1}}\|_{N\gamma; I^{k, R}_{\le \epsilon}},
  \end{equation}
  we have for all \(\epsilon \le l(R(k + 1))^{-1} \wedge 1\),
  \begin{align*}
    & \epsilon^{(N - 1 + \theta)\gamma} \epsilon^{(|f| + 1)\gamma} \|R^{\sharp, f}\|_{N\gamma; I}
    \lesssim \sum_{k = 0}^{N - |f| - 1} 
      \epsilon^{(1 - \theta + \kappa)k \gamma}h(\epsilon)^k.
  \end{align*}
\end{lemma}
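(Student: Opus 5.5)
We bound \(R^{\sharp, f}\) directly from its explicit expression \eqref{eq:R-sharp-def} in Corollary~\ref{cor:remain-phi-Y}, term by term, using Corollary~\ref{cor:partial-bound} for every coefficient and estimating the three kinds of tensor factors in \(A^{m_1,m_2,m_3}\) separately. Write \(L := l(R(k+1))\) and work on \(I = I^{k,R}_{\le\epsilon}\), so that \(|t-s|\le \epsilon \le L^{-1}\wedge 1\).

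\textbf{Step 1 (coefficients).} By \eqref{eq:deriv-partial} with \(n = j\) (the summation index in \eqref{eq:R-sharp-def}, which we rename \(j\) to avoid a clash with the level index \(k\)), we have
\[\|D^j[\partial^f_X\phi(\mathbf{Y})]\|_{\infty;I}\lesssim L^{((|f|+1)\theta-(|f|+j)\kappa)\gamma}.\]

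\textbf{Step 2 (factors).} We estimate the three types of factor on \(I\).
\begin{itemize}
\item For \(\delta Y\): by \eqref{eq:remain-Y-one} and \eqref{eq:remain-Y-one-Z}, \(\|\delta Y_{st}\|\le \|R^{\mathbf{Z},\mathbf{1}}_{st}\|+\|B^\sharp_{st}\|+\sum_\tau \|\partial^\tau_XY\|_\infty |\<\mathbf{X}_{st},\tau\>|\). With Corollary~\ref{cor:partial-bound} and \ref{eq:kappa_b}, the last sum is \(\lesssim \sum_\tau L^{(|\tau|\theta-(|\tau|-1)\kappa)\gamma}\epsilon^{|\tau|\gamma}\). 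Since \(\epsilon L\le 1\), this is dominated by the \(|\tau|=1\) term \(L^{\theta\gamma}\epsilon^{\gamma}\). Likewise \(\|B^\sharp_{st}\|\le L^{1+\kappa\gamma}\epsilon\lesssim L^{\theta\gamma}\epsilon^\gamma\), because \(\epsilon^{1-\gamma}L^{1+\kappa\gamma-\theta\gamma}\le (\epsilon L)^{1-\gamma}\cdot L^{\gamma(1+\kappa-\theta)}\epsilon^{\gamma(\ldots)}\). One has to check the exponents here, using \(\kappa<1/N\) and \(\epsilon\le L^{-1}\). Finally \(\|R^{\mathbf{Z},\mathbf{1}}_{st}\|\le \epsilon^{N\gamma}\|R^{\mathbf{Z},\mathbf{1}}\|_{N\gamma;I}=\epsilon^{(1-\theta)\gamma}h(\epsilon)\).
\item For \(R^{\mathbf{Y},\mathbf{1}}\): the same bound holds, without the \(\tau\)-sum.
\item For the third factor: only trees with \(|\tau|\ge N-j+1\) appear, giving a factor \(\lesssim L^{(|\tau|\theta-(|\tau|-1)\kappa)\gamma}\epsilon^{|\tau|\gamma}\).
\end{itemize}

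\textbf{Step 3 (assembly).} Multiplying out, a generic term of \(\epsilon^{(N-1+\theta)\gamma}\epsilon^{(|f|+1)\gamma}\epsilon^{-N\gamma}\,\|\cdot\|\) is a product of powers of \(L^{\theta\gamma}\epsilon^\gamma\) (and smaller quantities), together with some power \(p\le j\) of \(\epsilon^{(1-\theta)\gamma}h(\epsilon)\). The constraint \(m_1\ne j\) in \eqref{eq:R-sharp-def} guarantees that at least one factor is either \(R^{\mathbf{Y},\mathbf{1}}\) or a high-order tree. Either choice supplies the extra \(\epsilon^{N\gamma}\) needed to cancel the \(\epsilon^{-N\gamma}\) from the Hölder seminorm.

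We then use \(\epsilon L\le 1\) to convert every leftover positive power of \(L\) into a negative power of \(\epsilon\). Matching the exponents should leave precisely \(\epsilon^{(1-\theta+\kappa)p\gamma}h(\epsilon)^p\) times a nonnegative power of \(\epsilon\), which is \(\le 1\). The power \(p\) counts the factors of \(R^{\mathbf{Z},\mathbf{1}}\) chosen from either the \(\delta Y\) or the \(R^{\mathbf{Y},\mathbf{1}}\) slots, so \(p\le j\le N-|f|-1\). This yields the claimed sum over the index (called \(k\) in the statement) from \(0\) to \(N-|f|-1\).

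The main obstacle is the exponent bookkeeping in Step 3. We must verify that, after substituting \(L\le\epsilon^{-1}\), the total power of \(\epsilon\) is nonnegative in every case, and in particular that the \(\kappa\)-gain in the coefficients exactly produces the \(\epsilon^{\kappa p\gamma}\). To do this, we will first treat the extreme cases: \(m_2=1\) with \(j=N-|f|-1\), and \(m_3=1\) with a tree of order \(N-1\). We will then argue monotonicity in \(m_1,m_2,m_3\) using the identity \(\sum_j\Lambda_j(1-\beta j)\) from Lemma~\ref{lem:sum-leaves}, in the form already packaged in Corollary~\ref{cor:partial-bound}.
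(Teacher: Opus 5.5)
Your overall plan, which bounds each summand of \eqref{eq:R-sharp-def} using Corollary~\ref{cor:partial-bound} for the coefficients and \eqref{eq:remain-Y-one} for the factors of $A^{m_1,m_2,m_3}$, is the paper's. However, Step~2 breaks at the drift. The claim $\|B^\sharp_{st}\|\le L^{1+\kappa\gamma}\epsilon\lesssim L^{\theta\gamma}\epsilon^{\gamma}$ is false. It is equivalent to $\epsilon^{1-\gamma}L^{1+(\kappa-\theta)\gamma}\lesssim 1$, and at $\epsilon=L^{-1}$ the left-hand side equals $L^{(1-\theta+\kappa)\gamma}\to\infty$. That is exactly the value of $\epsilon$ used in Corollary~\ref{cor:h-tilde-est} and Theorem~\ref{thm:non-explosion}. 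Put differently, over a time $L^{-1}$ the only available bound on the drift increment is $L^{\kappa\gamma}$, which is of order one even when $\kappa=0$. The rough part of the increment is only $L^{\theta\gamma}\epsilon^{\gamma}=L^{-(1-\theta)\gamma}$. So Step~3 cannot reduce every factor to powers of $L^{\theta\gamma}\epsilon^\gamma$ ``and smaller quantities'' together with powers of $\epsilon^{(1-\theta)\gamma}h(\epsilon)$. The exponent bookkeeping you defer would be carried out on a false bound.

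The missing ingredient is how the super-linear growth of $b$ is paid for. The paper keeps the drift as its own factor: it writes $R^{\mathbf{Y},\mathbf{1}}=R^{\mathbf{Z},\mathbf{1}}+B^\sharp$, uses $\|R^{\mathbf{Y},\mathbf{1}}\|^p\lesssim\|R^{\mathbf{Z},\mathbf{1}}\|^p+\|B^\sharp\|^p$, and bounds $\|B^\sharp\|_{N\gamma;I}\le\epsilon^{1-N\gamma}L^{1+\kappa\gamma}\le\epsilon^{-(N+\kappa)\gamma}$ as in \eqref{eq:B-sharp-reg}. A drift term has $m\le m_1+m_2\le j$ drift factors, and their extra $L^{m\kappa\gamma}$ is absorbed by the decay $L^{-(|f|+j)\kappa\gamma}$ of $D^j[\partial^f_X\phi(\mathbf{Y})]$. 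This is precisely why \ref{eq:kappa_b} and \ref{eq:kappa_n} share the same $\kappa$. The rest is absorbed by $\epsilon L\le 1$ and the prefactor $\epsilon^{(N-1+\theta)\gamma}\epsilon^{(|f|+1)\gamma}$, so these terms are $O(1)$, i.e.\ they give the $k=0$ summand.

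Consequently, in the drift terms the $\kappa$-decay of the coefficients is used up and cannot also produce the factor $\epsilon^{\kappa p\gamma}$ you expect in Step~3. That factor can only come from the pure $R^{\mathbf{Z},\mathbf{1}}$ terms. There, note that $\epsilon L\le1$ only lets you trade a nonnegative net power of $L$ for a power of $\epsilon$. A negative power $L^{-\kappa(\cdot)}$ is not bounded by $\epsilon^{\kappa(\cdot)}$ when $\epsilon<L^{-1}$. The paper's proof performs the same conversion; this is harmless for the non-explosion argument, which only needs $\lambda>0$, and $\lambda=(1-\theta)\gamma$ would do. Once the drift is tracked this way, a direct exponent count for each $(j,m_1,m_2,m_3)$, as in the paper, finishes the proof. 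No monotonicity argument via Lemma~\ref{lem:sum-leaves} is needed.
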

\begin{proof}
  We estimate each summand appearing in \(\|R^{\sharp, f}\|_{N\gamma; I} \epsilon^{(|f| + 1)\gamma}\).
  Fixing \(f \in \CF_{N - 1}\), \(k \in \{1, \dots, N - |f| - 1\}\), \(m_1 + m_2 + m_3 = k\) and \(m_1 \neq k\), 
  let us denote \(A^{m_1, m_2, m_3}\) as in Equation~\eqref{eq:A-def}. Then, by Corollary~\ref{cor:partial-bound},
  and Equation~\eqref{eq:remain-Y-one}, we have that
  \begin{equs}\label{eq:R-sharp-bd1}
      & \|D^k[\partial^f_X \phi(Y)]\|_{\infty; I} \|A^{m_1, m_2, m_3}\|_{N\gamma; I}\\[2ex]
      \lesssim\ 
      & 
      \|R^{\mathbf{Y}, \mathbf{1}}\|_{N\gamma; I}^{m_1 + m_2}
      \sum_{|\tau| > N - k} 
      \|\partial^\tau_X Y\|_{\infty; I}^{m_3}
      \epsilon^{(- (|f| + 1)\theta + (|f| + k)\kappa + N(m_1 + m_2 - 1) + |\tau|m_3)\gamma}\\
      +\ & 
      \|R^{\mathbf{Y}, \mathbf{1}}\|_{N\gamma; I}^{m_2}
        \sum_{\substack{|\tau| > N - k\\ \sigma \in \CT_{N - 1}^+}}
          \|\partial_X^\sigma Y\|^{m_1}_{\infty; I}\|\partial_X^\tau Y\|^{m_3}_{\infty; I}
          \epsilon^{(- (|f| + 1)\theta + (|f| + k)\kappa + N (m_2 - 1) + |\sigma| m_1 + |\tau| m_3)\gamma}
  \end{equs}
  in which we have absorbed the terms involving the H\"older norms of \(\mathbf{X}\) into the constant 
  indicated by \(\lesssim\). We estimate each term individually. By Corollary~\ref{cor:partial-bound} 
  and the fact that \(k - m_3 = m_1 + m_2 \), we find
  \begin{align*}
    & \epsilon^{(|f| + 1)\gamma} \sum_{|\tau| > N - k} 
        \|\partial^\tau_X Y\|_{\infty; I}^{m_3}
        \epsilon^{(- (|f| + 1)\theta + (|f| + k)\kappa)\gamma}
        \epsilon^{N(m_1 + m_2 - 1)\gamma}\epsilon^{|\tau|m_3\gamma}\\
    \lesssim\ & \sum_{|\tau| > N - k}  
        \epsilon^{((1 - \theta)(1 + |f| + |\tau|m_3) + \kappa(|f| + |\tau|m_3 + m_1 + m_2) + N(m_1 + m_2 - 1))\gamma}\\
    \lesssim\ & \epsilon^{((1 - \theta)(1 + |f|) + \kappa(|f| + m_1 + m_2) + N(m_1 + m_2 - 1))\gamma}.
  \end{align*}
  Similarly, we have 
  \begin{align*}
    & \epsilon^{(|f| + 1)\gamma} \sum_{\substack{|\tau| > N - k\\ \sigma \in \CT_{N - 1}^+}}
    \|\partial_X^\sigma Y\|^{m_1}_{\infty; I}\|\partial_X^\tau Y\|^{m_3}_{\infty; I}
    \epsilon^{(- (|f| + 1)\theta + (|f| + k)\kappa)\gamma}
    \epsilon^{N(m_2 - 1)\gamma}\epsilon^{|\sigma| m_1\gamma}\epsilon^{|\tau| m_3\gamma}\\
    \lesssim\ & \epsilon^{((1 - \theta)(1 + |f|) + \kappa(|f| + m_2) + N(m_2 - 1))\gamma}.
  \end{align*}
  Now by Equation~\eqref{eq:remain-Y-one}, 
  \begin{equation}\label{eq:R-Y-one-bd}
    \|R^{\mathbf{Y}, \mathbf{1}}\|_{N\gamma; I}^{p} 
    \lesssim \|R^{\mathbf{Z}, \mathbf{1}}\|_{N\gamma; I}^{p} + \|B^\sharp\|_{N\gamma; I}^{p}
  \end{equation}
  for any \(p \ge 1\). Thus, by substituting this into Equation~\eqref{eq:R-sharp-bd1}, we can estimate the terms of 
  \eqref{eq:R-sharp-bd1} involving \(\|B^\sharp\|_{N\gamma; I}\). 
  Since by assumption \(\epsilon \le l(R(k + 1))^{-1}\) and for all \(r \in I\), \(\|b(\mathbf{Y}_r)\| \le l(R(k + 1))^{1 + \kappa \gamma}\),
  we observe that
  \begin{equation}\label{eq:B-sharp-reg}
    \|B^\sharp\|_{N\gamma; I} \le \sup_{u < v \in I} \frac{1}{|u - v|^{N\gamma}} \int_u^v \|b(\mathbf{Y}_r)\| \dd r
       \le \epsilon^{1 - N\gamma} l(R(k + 1))^{1 + \kappa \gamma} \le \epsilon^{-(N + \kappa)\gamma}.
  \end{equation}
  Hence, as
  \begin{align*}
    & \|B^\sharp\|_{N\gamma; I}^{m_1 + m_2}
      \epsilon^{((1 - \theta)(1 + |f|) + \kappa(|f| + m_1 + m_2) + N(m_1 + m_2 - 1))\gamma}
    \lesssim \epsilon^{((1 - \theta)(1 + |f|) + \kappa|f| - N)\gamma},
  \end{align*}
  and 
  \begin{align*}
    & \|B^\sharp\|_{N\gamma; I}^{m_2}\epsilon^{((1 - \theta)(1 + |f|) + \kappa(|f| + m_2) + N(m_2 - 1))\gamma}
    \lesssim \epsilon^{((1 - \theta)(1 + |f|) + \kappa|f| - N)\gamma},
  \end{align*}
  by substituting the above estimates into Equation~\eqref{eq:R-sharp-bd1}, it follows that
  \begin{align*}
    & \epsilon^{((N - 1) + \theta)\gamma} \epsilon^{(|f| + 1)\gamma}\|R^{\sharp, f}\|_{N\gamma; I} \\
    \lesssim\ & \sum_{k = 1}^{N - |f| - 1}\sum_{\substack{m_1 + m_2 + m_3 = k\\ m_1 \neq k}} 
    \bigg(\|R^{\mathbf{Z}, \mathbf{1}}\|_{N\gamma; I}^{m_2}
      \epsilon^{(N + \kappa) m_2\gamma} + \|R^{\mathbf{Z}, \mathbf{1}}\|_{N\gamma; I}^{m_1 + m_2} 
      \epsilon^{(N + \kappa)(m_1 + m_2)\gamma} + 1\bigg)\\
    \le\ & \sum_{k = 1}^{N - |f| - 1}\sum_{\substack{m_1 + m_2 + m_3 = k\\ m_1 \neq k}} 
      (\epsilon^{(1 - \theta + \kappa)(m_1 + m_2) \gamma}h(\epsilon)^{m_1 + m_2} 
      + \epsilon^{(1 - \theta + \kappa)m_2 \gamma}h(\epsilon)^{m_2} + 1)\\
    \lesssim\ & \sum_{k = 0}^{N - |f| - 1} 
      \epsilon^{(1 - \theta + \kappa)k \gamma}h(\epsilon)^k
  \end{align*}
  where in the penultimate line, we used the fact that 
  \[\|R^{\mathbf{Z}, \mathbf{1}}\|_{N\gamma; I}^{q}\epsilon^{N q\gamma} = 
    \epsilon^{(1 - \theta)q \gamma}\|R^{\mathbf{Z}, \mathbf{1}}\|_{N\gamma; I}^{q}\epsilon^{(N - (1 - \theta))q\gamma} 
    = \epsilon^{(1 - \theta)q \gamma}h(\epsilon)^{q}\]
  for any \(q > 0\).
\end{proof}

\begin{lemma}\label{lem:h-poly-bdd}
  Under Assumption~\ref{as:growth} and denoting \(h(\epsilon)\) as 
  in Equation~\eqref{eq:h-def}, we set \(\lambda := (1 - \theta + \kappa)\gamma\). Then, we have that
  \begin{equation}\label{eq:h-poly-bdd}
    \begin{split}
      h(\epsilon)
      \lesssim & 
        \sum_{f \in \CF_{N - 1}} \left(\epsilon^{N\lambda} h(\epsilon)^{N - |f|} +
        \sum_{k = 0}^{N - |f| - 1} 
        \epsilon^{k\lambda}h(\epsilon)^k
        \right)
    \end{split}
  \end{equation}
  for all \(\epsilon \le l(R(k + 1))^{-1} \wedge 1\).
\end{lemma}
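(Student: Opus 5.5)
Multiply the sewing estimate of Lemma~\ref{lem:remain-Z-one} on \(I = I^{k,R}_{\le\epsilon}\) by \(\epsilon^{(N-1+\theta)\gamma}\), so that the left-hand side is exactly \(h(\epsilon)\). Then bound the four groups of terms on the right separately, each by a term of \eqref{eq:h-poly-bdd}. The constants coming from the H\"older norms of \(\mathbf{X}\) and from the tree combinatorics are absorbed into \(\lesssim\). The only inputs are Corollary~\ref{cor:partial-bound}, which bounds \(C^f_I\), \(\|\partial^\tau_X Y\|_{\infty;I}\) and \(\|\partial^f_X\phi(\mathbf{Y})\|_{\infty;I}\) by powers of \(l(R(k+1))\); the bound \eqref{eq:B-sharp-reg} on \(B^\sharp\); and Lemma~\ref{lem:R-sharp-bd}. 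Throughout, \(\epsilon\le l(R(k+1))^{-1}\wedge 1\) lets us replace every positive power \(l(R(k+1))^{a}\) by \(\epsilon^{-a}\).

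\emph{The \(R^{\sharp,f}\) term.} It carries \(\epsilon^{\gamma}\cdot\epsilon^{|f|\gamma}\cdot\epsilon^{(N-1+\theta)\gamma}\). This is precisely the quantity bounded in Lemma~\ref{lem:R-sharp-bd}, which gives \(\sum_{k=0}^{N-|f|-1}\epsilon^{k\lambda}h(\epsilon)^k\), since \(\lambda=(1-\theta+\kappa)\gamma\).

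\emph{The \(C^f_I\|R^{\mathbf{Z},\mathbf{1}}\|^{N-|f|}\) term.} Write \(\|R^{\mathbf{Z},\mathbf{1}}\|_{N\gamma;I}=h(\epsilon)\epsilon^{-(N-1+\theta)\gamma}\) and use \(C^f_I\lesssim \epsilon^{-((|f|+1)\theta-N\kappa)\gamma}\). The exponent of \(\epsilon\) then becomes
\[
\gamma+(N-1+\theta)\gamma+(N-1)(N-|f|)\gamma-(N-1+\theta)(N-|f|)\gamma-((|f|+1)\theta-N\kappa)\gamma .
\]
This simplifies to \(N\gamma-\theta(N-|f|)\gamma-(|f|+1)\theta\gamma+\theta\gamma+N\kappa\gamma=N(1-\theta+\kappa)\gamma=N\lambda\), which yields \(\epsilon^{N\lambda}h(\epsilon)^{N-|f|}\).

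\emph{The remaining terms.} The \(B^\sharp\) counterpart is treated the same way using \(\|B^\sharp\|_{N\gamma;I}\le\epsilon^{-(N+\kappa)\gamma}\); a similar exponent count shows its power of \(\epsilon\) is nonnegative, so it is \(\lesssim 1\). The \(\partial^\tau_X Y\) sum is handled with \(\|\partial^\tau_X Y\|_{\infty;I}\lesssim\epsilon^{-(|\tau|\theta-(|\tau|-1)\kappa)\gamma}\). Each factor is then at least \(\epsilon^{((|\tau|-1)(1-\theta+\kappa)-\theta)\gamma}\), hence at least \(\epsilon^{-\theta\gamma}\) raised to \(N-|f|\). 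Combined with the prefactor, this again leaves a nonnegative exponent, so it is \(\lesssim 1\). The final term \(\sum_{|f|=N-1}\|\partial^f_X\phi\|_\infty\) contributes \(\epsilon^{(N-1+\theta)\gamma}\epsilon^{-(N\theta-(N-1)\kappa)\gamma}\). Its exponent is \((N-1)(1-\theta+\kappa)\gamma\ge0\), so it is also \(\lesssim1\). All the \(O(1)\) contributions are absorbed into the \(k=0\) summand.

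\emph{Main obstacle.} The only delicate point is the exponent bookkeeping in the \(B^\sharp\) and \(\partial^\tau_X Y\) terms, namely checking that these exponents really are nonnegative given \(\theta<1\), \(\kappa<1/N\) and \(\gamma\le 1/N\). I would verify this by expanding each exponent as an affine function of \(|f|\) and \(|\tau|\) and checking it at the extreme values. If an exponent came out negative, the fallback would be to retain the bound from Lemma~\ref{lem:R-sharp-bd}'s style of estimate, i.e. collapse such terms into \(\epsilon^{k\lambda}h^k\)-type terms with \(k=0\) after reusing \(\epsilon\le l^{-1}\) more carefully.
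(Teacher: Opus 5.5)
Your proposal is the paper's proof step for step: you multiply Lemma~\ref{lem:remain-Z-one} by \(\epsilon^{(N-1+\theta)\gamma}\), send the \(R^{\sharp,f}\) terms to Lemma~\ref{lem:R-sharp-bd}, obtain \(\epsilon^{N\lambda}h(\epsilon)^{N-|f|}\) for the \(R^{\mathbf{Z},\mathbf{1}}\) term from Corollary~\ref{cor:partial-bound}, and bound the remaining three terms. The paper records those three as \(\epsilon^{|f|\lambda}\), \(\epsilon^{(|\tau|(N-|f|)+|f|)\lambda}\) and \(\epsilon^{(N-1)\lambda}\), all \(\le 1\), so the exponents you were worried about are indeed nonnegative and no fallback is needed.

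One caveat, shared with the paper: writing \(C^f_I\lesssim\epsilon^{-((|f|+1)\theta-N\kappa)\gamma}\) breaks your own ``positive powers only'' rule when \(N\kappa>(|f|+1)\theta\). In that regime you can only bound the negative power of \(l(R(k+1))\) by a constant, so the \(R^{\mathbf{Z},\mathbf{1}}\) term gives \(\epsilon^{(N(1-\theta)+(|f|+1)\theta)\gamma}h(\epsilon)^{N-|f|}\) rather than \(\epsilon^{N\lambda}h(\epsilon)^{N-|f|}\); this is still a positive power of \(\epsilon\), which is all Corollary~\ref{cor:h-tilde-est} needs.
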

\begin{proof}
  By multiplying both sides of the estimate provided by Lemma~\ref{lem:remain-Z-one} by \(\epsilon^{(N - 1 + \theta)\gamma}\),
  we have 
  \begin{equation}\label{eq:h-est}
    \begin{split}
      h(\epsilon)
      \lesssim & \sum_{f \in \CF_{N - 1}} 
          \|R^{\sharp, f}\|_{N\gamma; I}
          \epsilon^{|f|\gamma}\epsilon^{((N - 1) + \theta)\gamma}\epsilon^{\gamma} \\
      & + \sum_{f \in \CF_{N - 1}} C_I^f \|R^{\mathbf{Z}, \mathbf{1}}\|_{N\gamma; I}^{N - |f|} 
          \epsilon^{(N - 1)(N - |f|)\gamma} \epsilon^{((N - 1) + \theta)\gamma}\epsilon^{\gamma}\\
      & + \sum_{f \in \CF_{N - 1}} C_I^f \|B^\sharp\|_{N\gamma; I}^{N - |f|} 
          \epsilon^{(N - 1)(N - |f|)\gamma} \epsilon^{((N - 1) + \theta)\gamma}\epsilon^{\gamma}\\
      & + \sum_{f \in \CF_{N - 1}} C_I^f \sum_{\tau \in \CT^+_{N - 1}} \|\partial^\tau_X Y\|_{\infty; I}^{(N - |f|)}
            \epsilon^{(N - |f|)(|\tau| - 1)\gamma}\epsilon^{((N - 1) + \theta)\gamma}\epsilon^{\gamma}\\
      & + \sum_{|f| = N - 1} \|\partial^f_X \phi(\mathbf{Y})\|_{\infty; I} \epsilon^{((N - 1) + \theta)\gamma}.
    \end{split}
  \end{equation}
  The first sum on the right hand side is estimated by Lemma~\ref{lem:R-sharp-bd} and it remains to 
  estimate the remaining four sums.

  By Corollary~\ref{cor:partial-bound} and Equation~\eqref{eq:B-sharp-reg}, we have that, 
  for any \(f \in \CF_{N - 1}\), 
  \begin{align*}
    & C_I^f \|R^{\mathbf{Z}, \mathbf{1}}\|_{N\gamma; I}^{N - |f|} 
      \epsilon^{(N - 1)(N - |f|)\gamma} \epsilon^{(N - 1 + \theta)\gamma}\epsilon^{\gamma}
    \lesssim \epsilon^{N(1 - \theta + \kappa)\gamma}h(\epsilon)^{N - |f|}
  \end{align*}
  and
  \begin{align*}
    & C_I^f \|B^\sharp\|_{N\gamma; I}^{N - |f|} 
      \epsilon^{(N - 1)(N - |f|)\gamma} \epsilon^{(N - 1 + \theta)\gamma}\epsilon^{\gamma}
    \lesssim \epsilon^{|f|(1 - \theta + \kappa)\gamma}.
  \end{align*}
  Moreover, for any \(\tau \in \CT^+_{N - 1}\),
  \begin{align*}
    & C_I^f \|\partial^\tau_X Y\|_{\infty; I}^{(N - |f|)}
      \epsilon^{(N - |f|)(|\tau| - 1)\gamma}\epsilon^{(N - 1 + \theta)\gamma}\epsilon^{\gamma}
    \lesssim \epsilon^{(|\tau|(N - |f|) + |f|)(1 - \theta + \kappa)\gamma}.
  \end{align*}
  Finally, for \(|f| = N - 1\), we have that
  \[\|\partial^f_X \phi(\mathbf{Y})\|_{\infty; I} \epsilon^{(N - 1 + \theta)\gamma}
    \lesssim \epsilon^{(N - 1)(1 - \theta + \kappa)\gamma}.\]
  Hence, we obtain the desired estimate by substituting the above inequalities into Equation~\eqref{eq:h-est}.
\end{proof}

\begin{corollary}\label{cor:h-tilde-est}
  Under Assumption~\ref{as:growth}, there exists some \(K > 0\) such that for any \(R > 0\) and 
  sufficiently large \(k \in \mathbb{N}\), we have that
  \[l(R(k + 1))^{-(N - 1 + \theta)\gamma}\|R^{\mathbf{Z}, \mathbf{1}}\|_{N\gamma; I^{k, R}_{\le l(R(k + 1))^{-1}}} \le K.\]
\end{corollary}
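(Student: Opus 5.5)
The plan is to apply Lemma~\ref{lem:h-poly-bdd} at the endpoint scale \(\epsilon_k := l(R(k+1))^{-1}\) and turn it into a bound on \(h(\epsilon_k)\) that is uniform in \(k\). Since \(l\) is non-decreasing and (for large \(k\), e.g. when \(l(r)\to\infty\), or else trivially) \(\epsilon_k \le 1\), the lemma applies to every \(\epsilon \le \epsilon_k\). Note that \(h(\epsilon_k) = \epsilon_k^{(N-1+\theta)\gamma}\|R^{\mathbf{Z},\mathbf{1}}\|_{N\gamma; I^{k,R}_{\le \epsilon_k}}\), which is exactly the quantity in the statement. So it suffices to show \(h(\epsilon_k) \le K\) with \(K\) independent of \(R\) and \(k\).

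The inequality \eqref{eq:h-poly-bdd} has the form \(h \le C\,P_\epsilon(h)\). Here \(P_\epsilon\) is a polynomial of degree at most \(N\) whose constant term is bounded, whose linear term carries a factor \(\epsilon^{\lambda}\) (from the \(k=1\) summand), and whose higher-degree terms carry \(\epsilon^{k\lambda}\) or \(\epsilon^{N\lambda}\) with \(\lambda = (1-\theta+\kappa)\gamma > 0\). The implicit constant \(C\) depends only on \(N\), \(\gamma\), \(\theta\), \(\kappa\) and the norms of \(\mathbf{X}\), not on \(R\), \(k\) or \(\epsilon\). Because \(\epsilon \le 1\) and every nonlinear term carries a positive power of \(\epsilon\), I would use a continuity (bootstrap) argument. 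Set \(K := 2C(1+|\CF_{N-1}|)\), a bound on twice the constant term. Whenever \(h(\epsilon)\le K\), each term of degree \(j\ge 1\) is at most \(C\epsilon^{\lambda}K^{j}\). Fix \(\epsilon_0\) small depending only on \(K\) and \(C\) so that these terms sum to less than \(K/2\) for \(\epsilon\le\epsilon_0\). Then \(h(\epsilon)\le K\) forces \(h(\epsilon) < K\), so the set \(\{h \le K\}\) contains no boundary point of \(\{h<K\}\).

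To run the continuity argument I need \(\epsilon\mapsto h(\epsilon)\) to be continuous on \((0,\epsilon_k]\) and to satisfy \(h(\epsilon)\to 0\) as \(\epsilon\to0\). This holds if we read \(I^{k,R}_{\le\epsilon}\) as the specific interval \([\tau^{Rk},(\tau^{Rk}+\epsilon)\wedge\tau^{R(k+1)}]\). The map is continuous because the solution is a controlled rough path, so the \(N\gamma\)-seminorm of \(R^{\mathbf{Z},\mathbf{1}}\) over the growing interval is continuous in the endpoint. The norm \(\|R^{\mathbf{Z},\mathbf{1}}\|_{N\gamma}\) is finite, and in fact of higher regularity \((N+1)\gamma\) up to the \(|f|=N-1\) term, so the prefactor \(\epsilon^{(N-1+\theta)\gamma}\) drives \(h\) to \(0\). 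Hence \(h(\epsilon)\le K\) for all \(\epsilon\le \epsilon_k\wedge\epsilon_0\).

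The main obstacle is the gap between \(\epsilon_0\) and \(\epsilon_k\): the bootstrap threshold \(\epsilon_0\) is fixed, whereas we need the bound at \(\epsilon_k = l(R(k+1))^{-1}\). This is where ``sufficiently large \(k\)'' enters. Since \(l\) is non-decreasing and unbounded in the intended case \(l(r)=c(1+r)\), we have \(\epsilon_k\le\epsilon_0\) for all \(k\ge k_0(R)\), which closes the argument. If \(l\) is bounded, I would instead split \([\tau^{Rk},\tau^{Rk}+\epsilon_k]\) into boundedly many pieces of length \(\epsilon_0\) and use subadditivity of Hölder remainders via Chen's relation. That is messier, so I would first check that the intended reading of the corollary, through its use in Theorem~\ref{thm:non-explosion}, only needs \(k\) large with \(l\to\infty\).
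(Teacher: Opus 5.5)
This is essentially the paper's argument. The paper also feeds the inequality of Lemma~\ref{lem:h-poly-bdd} into a continuity argument to get a threshold \(\epsilon^*\) and a bound \(K\) independent of \(R\) and \(k\), and then uses \(l(R(k+1))^{-1} \to 0\) to get \(\epsilon_k \le \epsilon^*\) for large \(k\). It differs from you in two small ways. It extends \(h\) by the constant \(h(\epsilon_k)\) for \(\epsilon > \epsilon_k\), so the inequality holds on all of \((0,1]\). It also packages the bootstrap as Lemma~\ref{lem:unif-bound}, which starts from the monotonicity of \(\epsilon \mapsto h(\epsilon)\) rather than from \(h(0+) = 0\). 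Your detour for bounded \(l\) is unnecessary at the paper's level of generality, since its last step tacitly uses \(l(r) \to \infty\) exactly as yours does.

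The step you justify incorrectly is the continuity of \(h\). The paper needs it too, since Lemma~\ref{lem:unif-bound} is stated for continuous paths, but never checks it. The map \(\epsilon \mapsto \|R^{\mathbf{Z},\mathbf{1}}\|_{N\gamma; [\tau^{Rk}, \tau^{Rk} + \epsilon]}\) is non-decreasing and left-continuous, but being controlled does not make it right-continuous. For example, take \(g(t) = (t - p)_+^{N\gamma}\) with \(p > \tau^{Rk}\). Its \(N\gamma\)-seminorm over \([\tau^{Rk}, \tau^{Rk} + \epsilon]\) is \(0\) for \(\tau^{Rk} + \epsilon \le p\) and \(1\) for every larger \(\epsilon\). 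Now \(R^{\mathbf{Z},\mathbf{1}}_{st} = \sum_{|f| = N-1} \partial^f_X \phi(\mathbf{Y}_s) \langle \mathbf{X}_{st}, [f] \rangle + O(|t-s|^{(N+1)\gamma})\), and one may add such a \(\delta g\) to a top-level component of \(\mathbf{X}\) without breaking Chen's relation. So \(h\) can jump, and nothing prevents it from jumping across the gap in the admissible set \(\{x \ge 0 : x \le C P_\epsilon(x)\}\).

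The repair is to bootstrap on
\(H(\epsilon) := \epsilon^{(N-1+\theta)\gamma} \sup_J \|R^{\mathbf{Z},\mathbf{1}}\|_{N\gamma; J}\),
with the supremum over all subintervals \(J \subseteq [\tau^{Rk}, \tau^{R(k+1)}]\) such that \(|J| \le \epsilon\).
\begin{itemize}
\item Each such \(J\) lies in \(B_{R(k+1)}\), so Lemma~\ref{lem:h-poly-bdd} holds on it with the same constant. Since the admissible set of \(x\) is closed, \(H\) satisfies the same inequality.
\item \(H\) is continuous for \(\epsilon > 0\), because \((s,t) \mapsto |R^{\mathbf{Z},\mathbf{1}}_{st}|/|t-s|^{N\gamma}\) is uniformly continuous on compact sets bounded away from the diagonal.
\item \(H(0+) = 0\) and \(H(\epsilon_k) \ge h(\epsilon_k)\).
\end{itemize}
With \(H\) in place of \(h\), your bootstrap goes through verbatim, and so does the paper's appeal to Lemma~\ref{lem:unif-bound}.
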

\begin{proof}
  Taking \(h\) as in Equation~\eqref{eq:h-def} and denoting \(\epsilon_k = l(R(k + 1))^{-1} \wedge 1\), we define 
  \[\tilde h(\epsilon) = h(\epsilon) \mathbf{1}_{\epsilon \le \epsilon_k} + 
      h(\epsilon_k) \mathbf{1}_{\epsilon > \epsilon_k}.\]
  Then, by Lemma~\ref{lem:h-poly-bdd}, we have that \(\tilde h\) satisfies the estimate 
  \[\tilde h(\epsilon)
    \lesssim 
      \sum_{f \in \CF_{N - 1}} \left(\epsilon^{N\lambda} \tilde h(\epsilon)^{N - |f|} +
      \sum_{k = 0}^{N - |f| - 1} 
      \epsilon^{k\lambda} \tilde h(\epsilon)^k\right)\]
  for all \(\epsilon \in (0, 1]\). Thus, as a consequence of \cite[Lemma 4.6]{Li:25} (we provide a 
  slightly more general formulation of this result in Lemma~\ref{lem:unif-bound}), it follows that 
  there exists some universal \(\epsilon^*, K > 0\) (which is notably independent of \(R\) and \(k\)) 
  such that \(\tilde h(\epsilon^*) \le K\). Finally, as \(\epsilon_k \le l(R(k + 1))^{-1} \to 0\) as 
  \(k \to \infty\), we have that \(K \ge \tilde h(\epsilon^*) = h(l(R(k + 1))^{-1})\) for sufficiently 
  large \(k\) as desired.
\end{proof}

\begin{theorem}\label{thm:non-explosion}
  Suppose Assumption~\ref{as:growth} holds for \(l\) satisfying \(\int_a^\infty l(r)^{-1} \dd r = \infty\)
  for some \(a > 0\). Moreover, suppose that the vector field \(b\) satisfies
  \begin{equation}\label{eq:lin-growth}
    \<x, b(x)\> \le \|x\| l(\|x\|) \text{ for all } x \in \mathbb{R}^d.
  \end{equation}
  Then, the branched RDE~\eqref{eq:bRDE} has a unique global solution.
\end{theorem}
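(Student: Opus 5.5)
Local well-posedness of \eqref{eq:bRDE} follows from the standard fixed point argument, since the coefficients are \(C^N\) and hence locally bounded. So there is a unique maximal solution \(\mathbf{Y}\) on \([0,\xi)\), and it remains to show \(\xi > T\), i.e.\ that \(\|Y_t\|\) cannot blow up before \(T\). I would follow the contour strategy of the introduction with levels \(r_k = Rk\) and times
\[\tau^k = \tau^{Rk}, \qquad \sigma^k = \sup\{t<\tau^{k+1} : \|Y_t\|\le Rk\}.\]
The goal is to show that, for all sufficiently large \(k\), the crossing time satisfies \(\tau^{k+1}-\sigma^k \ge t_k := c\,\epsilon_k\), where \(\epsilon_k = l(R(k+1))^{-1}\) and \(c\) is independent of \(k\). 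Since \(l\) is non-decreasing,
\[\sum_k \epsilon_k \ge R^{-1}\int_{R(k_0+1)}^\infty l(r)^{-1}\,\dd r = \infty,\]
which forces \(\xi \ge \sum_k (\tau^{k+1}-\sigma^k) = \infty\).

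\textbf{Step 1: split the increment.} Fix large \(k\) and set \(I = [\sigma^k,(\sigma^k+t_k)\wedge\tau^{k+1}]\subseteq B_{R(k+1)}\), which has length at most \(\epsilon_k\). On \(I\), write \(Y = Y_{\sigma^k} + B^\sharp + \eta\) with \(\eta_t = Z_t - Z_{\sigma^k}\), i.e.\ treat \(Y\) as a perturbed ODE \(\dd Y = b(Y)\dd t + \dd\eta\).

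\textbf{Step 2: bound the rough part \(\eta\).} Expand \(\eta\) as in \eqref{eq:remain-Y-one}:
\[\|\eta_t\| \le \sum_{\tau\in\CT^+_{N-1}} \|\partial^\tau_X Z\|_{\infty;I}\,\|\<\mathbf{X},\tau\>\|_{|\tau|\gamma}\, t^{|\tau|\gamma} + \|R^{\mathbf{Z},\mathbf{1}}\|_{N\gamma;I}\, t^{N\gamma}.\]
By Corollary~\ref{cor:partial-bound}, the terms with \(\tau\in\CT^+\) are \(\lesssim l^{(|\tau|\theta-(|\tau|-1)\kappa)\gamma}\,(c\epsilon_k)^{|\tau|\gamma}\lesssim c^{\gamma}\, l^{-(1-\theta)|\tau|\gamma}\le c^\gamma\), using \(\kappa\ge 0\). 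By Corollary~\ref{cor:h-tilde-est}, the remainder term is \(\le K\, l^{(N-1+\theta)\gamma}(c\epsilon_k)^{N\gamma} = K c^{N\gamma}\, l^{-(1-\theta)\gamma}\le Kc^{N\gamma}\). Note that \(I\) is of the form \(I^{k,R}_{\le \epsilon_k}\); here I use that the corollary's estimates only require an interval inside \(B_{R(k+1)}\) of length at most \(\epsilon_k\), not specifically one starting at \(\tau^{Rk}\). Hence \(\sup_I\|\eta\|\le C(c^\gamma + c^{N\gamma})\), which is below \(R/4\) for \(c\) small, uniformly in \(k\).

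\textbf{Step 3: control the drift via \eqref{eq:lin-growth}.} This is the main obstacle, since \(\|b\|\) may be as large as \(l^{1+\kappa\gamma}\), so the naive bound \(\|B^\sharp\| \le t_k\, l^{1+\kappa\gamma}\) fails. Instead I would run the Gronwall-type comparison of \cite[Lemma 3.5]{Li:25}. Set \(u = Y - \eta\), so \(\dot u = b(u+\eta)\). Then
\[\frac{\dd}{\dd t}\|u\|^2 = 2\<u+\eta, b(u+\eta)\> - 2\<\eta,b(u+\eta)\> \le 2\|Y\|\, l(\|Y\|) + 2\|\eta\|\,\|b(Y)\|.\]
The cross term \(\|\eta\|\,\|b\|\) is still dangerous. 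I would handle it by shrinking the \(\eta\)-bound by a factor \(l^{-\kappa\gamma}\), which is available because Step 2 actually had spare decay \(l^{-(1-\theta)\gamma}\) and \(\kappa<1/N\). If that gives an insufficient margin, I would fall back to the bootstrap mentioned in the introduction: differentiate \(\|Y_t-\eta_t\|\) in the direction of \(Y\) and use \eqref{eq:lin-growth} only on its radial part. The target is
\[\|u_t\|^2 \le \|Y_{\sigma^k}\|^2 + C\, t\, R(k+1)\, l(R(k+1)) \le (Rk)^2 + C c R(k+1),\]
so \(\|u_t\|\le Rk + R/4\) for \(c\) small. Combined with Step 2 this gives \(\|Y_t\| < R(k+1)\) on \(I\), hence \(\tau^{k+1}-\sigma^k > t_k\).

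\textbf{Step 4: conclude.} Summing the non-summable \(t_k\) gives \(\xi=\infty\), so the solution is global. Uniqueness holds on every compact time interval by local Lipschitz uniqueness, hence the global solution is unique.
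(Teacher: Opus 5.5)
Your Steps 1, 2 and 4 match the paper's proof. On a window of length at most \(\epsilon_k=l(R(k+1))^{-1}\) inside \(B_{R(k+1)}\), the paper combines Corollary~\ref{cor:h-tilde-est} with Corollary~\ref{cor:partial-bound} to get \(\|\delta Z_{st}\|\le (K+|\CT^+_{N-1}|)\,|t-s|^{(1-\theta)\gamma}\), which is your bound on \(\eta\). The paper does not treat the drift by hand: it feeds this H\"older bound into the perturbed-ODE comparison lemma \cite[Lemma~3.5]{Li:25}.

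Your Step 3, which is meant to replace that lemma, does not work as stated. With \(L:=l(R(k+1))\), the cross term is
\[\int_I 2\|\eta_r\|\,\|b(Y_r)\|\,\dd r \;\lesssim\; cL^{-1}\cdot (c^{\gamma}+c^{N\gamma})L^{-(1-\theta)\gamma}\cdot L^{1+\kappa\gamma} \;=\; (c^{1+\gamma}+c^{1+N\gamma})\,L^{(\kappa+\theta-1)\gamma}.\]
Absorbing \(l^{\kappa\gamma}\) into the spare factor \(l^{-(1-\theta)\gamma}\) therefore requires \(\kappa\le 1-\theta\). This does not follow from \(\kappa<1/N\): Assumption~\ref{as:growth} allows \(\theta\) close to \(1\) with \(\kappa\) close to \(1/N\), and then this bound on the cross term grows without bound in \(k\). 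Your fallback (differentiate \(\|Y-\eta\|\) and use \eqref{eq:lin-growth} only radially) is the computation you already did, and it leaves the same term \(\langle \eta, b(Y)\rangle\).

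What is missing is that this term need not be \(O(1)\). It enters \(\|u\|^2\), whose admissible increase across the band \([Rk,R(k+1)]\) is of order \(R^2k\). Using \(\theta<1\) and \(L\ge 1\), you have \(\|u_t\|^2\le (Rk)^2+2cR(k+1)+Cc^{1+\gamma}L^{\kappa\gamma}\), hence
\[\|u_t\|\le Rk+2c+\frac{Cc^{1+\gamma}L^{\kappa\gamma}}{Rk}.\]
When \(l\) grows at most linearly the last term tends to \(0\), because \(\kappa\gamma<\gamma/N\le 1/N^2<1\).

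For a general non-decreasing \(l\) with \(\int^\infty l^{-1}=\infty\) and \(\kappa>0\), keep only the \(k\) with \(L^{\kappa\gamma}\le k\); there the last term is at most \(Cc^{1+\gamma}/R\). The discarded \(k\) have \(\epsilon_k<k^{-1/(\kappa\gamma)}\), which is summable, so \(\sum_{\text{good }k}\epsilon_k=\infty\). Step 4 then still gives \(\xi=\infty\), since for discarded \(k\) you only need \(\tau^{k+1}-\sigma^k\ge 0\).

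With this budget comparison in place of the absorption claim, your argument closes. As written, Step 3 does not supply the drift control that the paper imports from \cite[Lemma~3.5]{Li:25}.
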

\begin{proof}
  Under the assumed regularity conditions, the branched RDE~\eqref{eq:bRDE} has a unique
  maximal solution by a standard Banach fixed point argument and so, it suffices to show non-explosion.
  Let \(K > 0\) be the constant obtained from Corollary~\ref{cor:h-tilde-est} and \(R = R(K) > 0\) 
  be a sufficiently large constant depending on \(K\) (e.g. sufficiently large so that it satisfies 
  Equation (16) in \cite{Li:25}). Then, denoting \(\epsilon_k = l(R(k + 1))^{-1}\), 
  we have from Proposition~\ref{prop:remainder} that for sufficiently large \(k \in \mathbb{N}\) and
  for all \(s < t \in I^{k, R}_{\le \epsilon_k}\) 
  \begin{align*}
    \|\delta Z_{st}\| \le\ & \|R^{\mathbf{Z}, \mathbf{1}}_{st}\| 
      + \sum_{\tau \in \CT^+_{N - 1}} \|\partial_X^\tau Y_s\| \|\<\mathbf{X}, \tau\>\|_{|\tau|\gamma}|t - s|^{|\tau|\gamma}\\
    \lesssim\ & \|R^{\mathbf{Z}, \mathbf{1}}\|_{N\gamma; I} \epsilon_k^{(N - 1 + \theta)\gamma} |t - s|^{(1 - \theta)\gamma}
      + \sum_{\tau \in \CT^+_{N - 1}} \epsilon_k^{-(|\tau|\theta - (|\tau| - 1)\kappa)\gamma} |t - s|^{|\tau|\gamma}\\
    \le\ & \|R^{\mathbf{Z}, \mathbf{1}}\|_{N\gamma; I} \epsilon_k^{(N - 1 + \theta)\gamma}|t - s|^{(1 - \theta)\gamma}
    + \sum_{\tau \in \CT^+_{N - 1}} \epsilon_k^{(|\tau| - 1)(1 - \theta)\gamma + (|\tau| - 1)\kappa\gamma}|t - s|^{(1 - \theta)\gamma}\\
    \le\ & (K + |\CT^+_{N -1}|) |t - s|^{(1 - \theta)\gamma}.
  \end{align*}
  where in the second line, we applied the estimate from Corollary~\ref{cor:partial-bound}. 

  Hence, specializing \(I^{k, R}_{\le \epsilon_k} = [\tau^{Rk}, (\tau^{Rk} + \epsilon_k) \wedge \tau^{R(k + 1)}]\),
  it follows that, for some constant \(C > 0\) and sufficiently large \(k\), we have that 
  \[\sup_{t + \tau^{Rk} \in I^{k, R}_{\le \epsilon_k}} \|Z_{t + \tau^{Rk}} - Z_{\tau^{Rk}}\| 
    \le C t^{(1 - \theta) \gamma}.\]
  Thus, we obtain non-explosion by \cite[Lemma 3.5]{Li:25}.
\end{proof}

\subsection{A maximal inequality for branched RDEs}
We in this section provide a maximal inequality for the solution of a branched RDE with drift and unbounded derivatives.
For this, we use a variant of \cite[Lemma 4.6]{Li:25} but while making the constants 
within it explicit. This is useful should one wish to obtain integrability and tail estimates for the 
solution \((x_t)\) of the RDE~\eqref{eq:bRDE} when \(\mathbf{X}\) is a stochastic rough path (e.g. Gaussian).

\begin{lemma}\label{lem:unif-bound}
  Let \((V, \|\cdot\|)\) be a finite-dimensional normed vector space and \(U\) be a closed connected subset of \(V\).
  Suppose \(a, b \in C(U; \mathbb{R})\) for which
  \(\sup_{x \in U: \|x\| \le R} b(x) < \infty\) for all \(R > 0\) and
  \(c \in C(\mathbb{R}_+, \mathbb{R}_+)\) is monotone and \(c(0) = 0\). Then, denoting 
  \[R^* = \sup \{\|x\| \in \mathbb{R}_+ : x \in U, a(x) \le 0\}\]
  and
  \begin{equation}\label{eq:ineq-set}
    \CA =
      \left\{(X_\epsilon) \in C([0, 1], U) \;\middle|\;
        \begin{aligned}
          & \forall s < t,\ \|X_s\| \le \|X_t\|,\\
          & \forall \epsilon \in (0, 1),\ a(X_\epsilon) \le c(\epsilon) b(X_\epsilon)
        \end{aligned}
      \right\},
  \end{equation}
  for any \(r > 0\), if \(\epsilon^* > 0\) is such that, for all \(\epsilon < \epsilon^*\) 
  \[a(x) - c(\epsilon)b(x) > 0, \qquad \forall\ \|x\| = R^* + r,\] 
  then \(\sup_{X \in \CA} \|X_{\epsilon^*}\| \le R^* + r\).
\end{lemma}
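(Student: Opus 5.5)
The plan is a continuity (barrier) argument on the scalar function \(\epsilon \mapsto \|X_\epsilon\|\) for a fixed \(X \in \CA\). By hypothesis, for every \(\epsilon < \epsilon^*\) the inequality \(a(x) - c(\epsilon)b(x) > 0\) holds on the sphere \(S := \{x \in U : \|x\| = R^* + r\}\), while every \(X \in \CA\) satisfies \(a(X_\epsilon) \le c(\epsilon) b(X_\epsilon)\). So on \((0, \epsilon^*)\) the path can never lie on \(S\). Using monotonicity of \(\|X_\cdot\|\) and continuity we show it cannot lie beyond \(S\) either, and then conclude at \(\epsilon^*\) by continuity.

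\textbf{Step 1: the path starts inside the ball.} Since \(c\) is continuous with \(c(0) = 0\), letting \(\epsilon \downarrow 0\) in \(a(X_\epsilon) \le c(\epsilon) b(X_\epsilon)\) gives \(a(X_0) \le 0\). For the product \(c(\epsilon)b(X_\epsilon)\) to tend to zero we need \(b(X_\epsilon)\) bounded. This follows from \(\sup_{\|x\| \le R} b(x) < \infty\), because \(\|X_\epsilon\| \le \|X_1\|\) by monotonicity; only an upper bound on \(b\) is needed, since \(c \ge 0\). Continuity of \(a\) and of \(X\) then gives \(a(X_0) \le 0\), hence \(\|X_0\| \le R^* < R^* + r\) by the definition of \(R^*\).

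\textbf{Step 2: the path stays inside.} Suppose \(\|X_{\epsilon_1}\| > R^* + r\) for some \(\epsilon_1 < \epsilon^*\). The map \(\epsilon \mapsto \|X_\epsilon\|\) is continuous, so the intermediate value theorem gives \(\epsilon_0 \in (0, \epsilon_1)\) with \(\|X_{\epsilon_0}\| = R^* + r\), i.e. \(X_{\epsilon_0} \in S\). This uses \(X_{\epsilon_0} \in U\), which holds since \(X\) takes values in \(U\). The hypothesis then gives \(a(X_{\epsilon_0}) - c(\epsilon_0) b(X_{\epsilon_0}) > 0\), contradicting membership in \(\CA\). Hence \(\|X_\epsilon\| \le R^* + r\) for all \(\epsilon < \epsilon^*\). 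Letting \(\epsilon \uparrow \epsilon^*\) and using continuity of \(X\) gives \(\|X_{\epsilon^*}\| \le R^* + r\). Since the bound is uniform in \(X\), taking the supremum finishes the proof. Monotonicity of \(\|X_\cdot\|\) is in fact only needed for the boundedness in Step 1.

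\textbf{Main obstacle.} The delicate point is Step 1, the limit \(\epsilon \to 0\). It needs \(\|X\|\) bounded near \(0\) to control \(b\), which is automatic on the compact interval \([0,1]\). Two edge cases also need care. First, if \(\{a \le 0\} = \emptyset\), then \(R^* = \sup\emptyset\) should be read as \(0\), and Step 1 shows \(\CA\) is empty, so the claim is vacuous. Second, we need \(\epsilon^* \le 1\) so that \(X_{\epsilon^*}\) is defined, which we take as implicit. Connectedness and closedness of \(U\) are not really needed beyond guaranteeing that the crossing point lies in \(U\), which is automatic here.
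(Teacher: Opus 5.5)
Your argument is correct, but it anchors the path at a different point than the paper does.

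\textbf{The paper's route.} The paper argues by contradiction directly at \(\epsilon^*\). Assuming \(\|X_{\epsilon^*}\| > R^*+r\), it uses compactness of the closed annulus \(\{x \in U : R^*+r \le \|x\| \le \|X_{\epsilon^*}\|\}\), which relies on finite dimension and \(U\) closed. On this annulus \(a > 0\) and \(b\) is bounded above, so with \(c(\epsilon) \to 0\) it finds some \(\epsilon' \in (0,\epsilon^*)\) with \(a - c(\epsilon')b > 0\) on the whole annulus. Monotonicity of \(\|X_\cdot\|\) gives \(\|X_{\epsilon'}\| \le \|X_{\epsilon^*}\|\), so the defining inequality of \(\mathcal{A}\) forces \(\|X_{\epsilon'}\| < R^*+r\). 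The intermediate value theorem on \([\epsilon', \epsilon^*]\) then produces the forbidden crossing.

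\textbf{Your route.} You instead let \(\epsilon \downarrow 0\) to get \(a(X_0) \le 0\), hence \(\|X_0\| \le R^*\), and apply the intermediate value theorem on \([0,\epsilon_1]\). This is shorter and needs less:
\begin{itemize}
\item no compactness, so neither finite-dimensionality nor closedness or connectedness of \(U\);
\item no monotonicity of \(\|X_\cdot\|\);
\item not even the local upper bound on \(b\).
\end{itemize}
On the last point: \(b \circ X\) is continuous on \([0,1]\), so \(c(\epsilon)\, b(X_\epsilon) \to 0\) directly. Attributing the boundedness of \(b(X_\epsilon)\) to the hypothesis on \(b\) plus monotonicity is more than you need.

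\textbf{What the paper's route buys.} It never evaluates the path at \(\epsilon = 0\). It therefore works verbatim for paths that are only continuous on \((0,1]\) with nondecreasing norm. In that setting the monotonicity and the local bound on \(b\) are genuinely essential.

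The paper's proof also re-derives the existence of a suitable \(\epsilon^*\). The statement as written takes \(\epsilon^*\) as a hypothesis, so it is fine that you do not address this.
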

\begin{proof}
  Let $ \partial B_R=\{x \in U: \|x\|=R\}$, and define
  \[R^* = \sup \{\|x\|  : x \in U, a(x) \le 0\}.\]
  Since in the case where $R^*=\infty$, the inequality holds trivially so we may assume $R^*<\infty$. 

  For any $r>0$, we observe that \(\inf_{x \in \partial B_{R^*+r}} a(x) > 0\) since $a > 0$ on the compact set $\partial B_{R^*+r}$.
  Thus, as \(\lim_{\epsilon \to 0} c(\epsilon) = 0\) and moreover, \(b\) is bounded on \(B_{R^* + r}\), 
  there exists some \(\epsilon^* \in (0, 1)\) such 
  that for all \(\epsilon < \epsilon^*\), 
   $$a(x) - c(\epsilon) b(x) > 0, \qquad \forall x \in \partial B_{R^*+r}.$$
  Hence, taking \(X \in \CA\), it suffices to show \(\|X_{\epsilon^*}\| \le R^* + r\).
  Suppose, for contradiction, that \(\|X_{\epsilon^*}\| > R^* + r\). 
  Then, using the same reasoning as above, we can pick \(\epsilon' < \epsilon^*\) such that
  $$a(x) - c(\epsilon') b(x) > 0, \qquad \forall x \in \overline{B_{\|X_{\epsilon^*}\|} \setminus B_{R^* + r}}.$$
  Consequently, as \(X \in \CA\), 
  \[a(X_{\epsilon'}) - c(\epsilon') b(X_{\epsilon'}) \le 0 \hbox{ and }
  \|X_{\epsilon'}\| \le \|X_{\epsilon^*}\|,\]
  it follows that \(\|X_{\epsilon'}\| \le R^* + r\).

  Now, since \(\epsilon \mapsto \|X_\epsilon\|\) is continuous and \(\|X_{\epsilon^*}\| > R^* + r\), 
  there exists some \(\epsilon \in (\epsilon', \epsilon^*)\) such that \(\|X_\epsilon\| = R^* + r\).
  However, by the choice of \(\epsilon^*\), \(a(X_\epsilon) - c(\epsilon) b(X_\epsilon) > 0\) 
  which contradicts the fact that \(X \in \CA\).
\end{proof}

We remark that the requirement \(\sup_{x \in U: \|x\| \le R} b(x) < \infty\) is not redundant.
Indeed, in contrast to the linear case, continuity of \(b\) does not necessarily imply boundedness. 
Consider $c_{00}$ -- the space of sequences which are eventually $0$ equipped with the 
supremum norm. Defining $(x^k_n) = (2 \delta_{k, n})$, i.e. the sequence whose value at the $k$-th position 
is $2$ and $0$ everywhere else, we observe that $b: x\mapsto \sum_{n=1}^\infty x_n^n$ is not 
bounded on \(\{x^k\}_{k \in \mathbb{N}} \subseteq B_2\) despite being continuous.

\begin{theorem}\label{thm:integrability}
  Supposing Assumption~\ref{as:growth} and \(b\) has sub-linear growth
  in its radial direction, i.e. there exists some \(\delta > 0\) such that
  \begin{equation}\label{eq:sub-linear}
    \<x, b(x)\> \lesssim \|x\|(1 + \|x\|)^{1 - \delta},\qquad \forall x \in \mathbb{R}^d.
  \end{equation}
  Then, the unique global solution \((x_t)\) of the branched RDE~\eqref{eq:bRDE} satisfies
  \[\sup_{s \in [0, t]}\|x_s\| \lesssim \|\mathbf{X}\|_{\gamma; [0, t]}^\eta\]
  for any \(t \in [0, T]\) where
  \(\eta := (N^2 - N + 1)\left(\left(\frac{1}{\delta} + \frac{N + 1}{(1 - \delta)\lambda}\right) 
    \vee \left(\frac{1}{\delta} + \frac{1}{(1 - \delta)(1 - \theta)\gamma} - 1\right)\right)\)
  and \(\lambda := (1 - \theta + \kappa)\gamma\). 
\end{theorem}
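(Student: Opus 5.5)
Global existence is already given by Theorem~\ref{thm:non-explosion}: \eqref{eq:sub-linear} implies \eqref{eq:lin-growth} with \(l(r) = c(1 + r)\). So only the quantitative bound needs proof. The plan is to redo the contour argument of Theorem~\ref{thm:non-explosion} with every constant tracked explicitly in terms of \(L := \|\mathbf{X}\|_{\gamma; [0, t]}\). Here \(\|\mathbf{X}\|_\gamma\) denotes the homogeneous norm \(\max_{|f| \le N} \|\<\mathbf{X}, f\>\|_{|f|\gamma}^{1/|f|}\). We may assume \(L \ge 1\), since otherwise the bound is absorbed into the constant.

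\textbf{Step 1: explicit remainder bound.} First we make Corollary~\ref{cor:h-tilde-est} explicit in \(L\). Rescale time by \(\tilde\epsilon = \epsilon L^{1/\gamma}\). After this rescaling the rough path has unit norm, so every estimate in Lemmas~\ref{lem:remain-Z-one}, \ref{lem:R-sharp-bd} and \ref{lem:h-poly-bdd} holds with constants independent of \(L\). The drift term \(B^\sharp\) does not scale the same way as the rough terms. It contributes an extra factor coming from \eqref{eq:B-sharp-reg}, which we bound through \(l(R(k+1))\) as before.

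Next we apply Lemma~\ref{lem:unif-bound} to the polynomial inequality \eqref{eq:h-poly-bdd}. Take \(a(x) = x - C\) and \(b\) a polynomial of degree at most \(N\), with \(c(\epsilon) = \epsilon^{\lambda}\). Then \(R^*\) is an absolute constant, and the admissible \(\epsilon^*\) satisfies \(\epsilon^* \gtrsim (\text{const})^{-1/\lambda}\). This gives the following form of Corollary~\ref{cor:h-tilde-est}: on intervals of length at most \(\epsilon_k := c\, L^{-1/\gamma} l(R(k+1))^{-1}\) (up to powers determined by \(\lambda\)), we have
\[\|R^{\mathbf{Z},\mathbf{1}}\|_{N\gamma} \le K\, \epsilon_k^{-(N-1+\theta)\gamma}\]
with \(K\) independent of \(L\) and \(k\). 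The factor \(N+1\) over \(\lambda\) in the definition of \(\eta\) should come from exactly this step, namely from the top-degree term \(\epsilon^{N\lambda} h^{N}\) in \eqref{eq:h-poly-bdd}.

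\textbf{Step 2: explicit increment bound.} Repeat the estimate on \(\delta Z\) from the proof of Theorem~\ref{thm:non-explosion}. On each such interval this gives
\[\|\delta Z_{st}\| \le C\, L^{p}\, |t - s|^{(1-\theta)\gamma}\]
for an explicit power \(p\). Combined with the drift this yields two regimes. In the rough regime the exponent is governed by \(\frac{1}{(1-\theta)\gamma}\). In the regime where the sewing remainder dominates, the exponent is governed by \(\frac{N+1}{\lambda}\). Taking the maximum of the two regimes explains the \(\vee\) in \(\eta\).

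\textbf{Step 3: the drift comparison.} Here we need a quantitative version of \cite[Lemma 3.5]{Li:25}. Consider the perturbed ODE \(\dd Y = b(Y)\,\dd t + \dd Z\), where \(Z\) has local H\"older control with constant \(M = CL^{p}\). Write \(\Psi(r) = \int_a^r (1+u)^{\delta - 1}\,\dd u \sim r^{\delta}\). The condition \eqref{eq:sub-linear} lets us compare \(\|Y\|\) with the solution of \(\dot r = (1+r)^{1-\delta}\) plus the accumulated perturbation. Above a threshold radius \(r_0\), each contour crossing must consume time at least \(\epsilon_k\). Choose \(r_0 \sim M^{1/(1-\delta)\cdots}\) so that the perturbation over a crossing time is less than half the contour width. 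Summing the crossing times then shows that by time \(t \le T\) the radius cannot exceed \(\Psi^{-1}(T + \Psi(r_0))\), up to powers of \(L\). This produces the \(\frac{1}{\delta}\) contribution. Finally, collect all powers of \(L\). The factor \(N^2 - N + 1\) should come from converting between the homogeneous norm and the componentwise norms \(\|\<\mathbf{X},\tau\>\|\) for \(|\tau| \le N\), since these enter \eqref{eq:remain-Z-one} raised to powers up to \(N(N-1)+1\).

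\textbf{Main obstacle.} The hardest part will be Step 3. It requires choosing the contour spacing \(R\) as an explicit power of \(L\), ensuring that the crossing-time bounds remain uniform, and matching the resulting exponents exactly to the stated \(\eta\). My first attempt would be to fix \(R = L^{\eta'}\) and optimise over \(\eta'\) at the end.
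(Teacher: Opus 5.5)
Your proposal does not yet prove the statement. Step~3 carries all the quantitative content, and it is left as ``\(r_0\sim M^{1/(1-\delta)\cdots}\)'' and ``up to powers of \(L\)''. Your account of where the pieces of \(\eta\) come from also contradicts your own Step~1. If after rescaling the inequality \eqref{eq:h-poly-bdd} has \(L\)-free constants, then Lemma~\ref{lem:unif-bound} returns absolute \(\epsilon^*\) and \(K\), as you say yourself. So neither \(\frac{N+1}{\lambda}\) nor \(N^2-N+1\) can be produced there.

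In the paper both factors appear precisely because the norm is \emph{kept}:
\begin{itemize}
\item Tracking constants in Lemma~\ref{lem:h-poly-bdd} gives \(h(\epsilon)\lesssim\Xi\bigl(\epsilon^{\lambda}(h(\epsilon)^N+h(\epsilon))+1\bigr)\) with \(\Xi:=\|\mathbf{X}\|_\gamma\vee\|\mathbf{X}\|_\gamma^{N+(N-1)^2}\). So \(N^2-N+1=N+(N-1)^2\) is the top power of the noise in that constant.
\item Lemma~\ref{lem:unif-bound} with \(a(x)=x-c_0\Xi\), \(b(x)=c_0(x^N+x)\), \(c(\epsilon)=\epsilon^\lambda\Xi\), \(r=1\) forces \(\epsilon^*\sim\Xi^{-1/\lambda}(\Xi^N+\Xi+1)^{-1/\lambda}\), i.e.\ \(\Xi^{-(N+1)/\lambda}\), with \(K=c_0\Xi+1\).
\item The \(\vee\) is not two ``regimes'' of the increment bound but two constraints on the crossing window. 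With \(R=C\Xi\), let \(k_0\) be the first \(k\) with \((Rk)^{\delta-1}\le\epsilon^*\wedge K^{-1/((1-\theta)\gamma)}\).
\item \cite[Lemma 3.5]{Li:25} then gives \(\tau^{Rk}-\tau^{R(k-1)}\ge(Rk)^{\delta-1}\) for \(k\ge k_0\). Inverting \(\sum_{k=k_0}^M(Rk)^{\delta-1}\ge R^{\delta-1}\delta^{-1}(M^\delta-k_0^\delta)\) is where \(\frac1\delta\) and \(\frac1{1-\delta}\) enter.
\end{itemize}
Your \(\Psi^{-1}(T+\Psi(r_0))\) also loses the contour spacing. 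Reaching radius \(\rho=RM\) only costs time \(\gtrsim R^{-1}(\rho^\delta-r_0^\delta)\), and in this route \(R\) must grow like \(\Xi\).

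You do not need to reproduce \(\eta\) exactly: for \(L\ge1\) any smaller exponent suffices. Your rescaling idea gives a complete and sharper proof if you apply it to the \emph{whole equation} rather than only to the local \(\epsilon\)-estimates. Set \(\varrho:=L^{-1/\gamma}\). Then \(\tilde{\mathbf{Y}}_s:=\mathbf{Y}_{\varrho s}\) solves the bRDE on \([0,TL^{1/\gamma}]\), driven by \(\tilde{\mathbf{X}}_s:=\mathbf{X}_{\varrho s}\) (homogeneous norm at most \(1\)) with drift \(\varrho b\). For \(L\ge1\), \(\varrho b\) satisfies \ref{eq:kappa_b} and \eqref{eq:sub-linear} with the same constants. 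The argument above with \(\Xi\) replaced by \(1\) then gives universal \(R,k_0\) and \(\sup_{s\le T}\|Y_s\|\lesssim R\bigl(k_0^\delta+\delta R^{1-\delta}TL^{1/\gamma}\bigr)^{1/\delta}\lesssim 1\vee L^{1/(\gamma\delta)}\). Since \(\frac{1}{\gamma\delta}<\frac{N+1}{\delta}\le\frac{N^2-N+1}{\delta}\le\eta\) for \(N\ge2\), this implies the stated bound. As written, however, your proposal mixes the two routes (rescale locally, then track powers of \(L\) in the original time) and completes neither.
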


We remark that we had required the sub-linear growth condition due to our application of \cite[Lemma 3.5]{Li:25}. 
In the case where \(b\) has linear growth, \cite[Lemma 3.5]{Li:25} can only provide an exponential estimate 
for the process of the form \(\|x_t\| \lesssim \exp(C\|\mathbf{X}\|_\gamma^\eta)\) in which \(\eta > 2\) 
(in contrast to a polynomial estimate in the sub-linear case). This estimate is in general not very 
useful as the expectation of the right hand side would not be finite even for Gaussian rough paths.

\begin{proof}
  Making the dependence on \(\|\mathbf{X}\|_{\gamma; [0, T]}\) in Lemma~\ref{lem:h-poly-bdd} explicit, it is easy to 
  check Equation~\eqref{eq:h-poly-bdd} becomes
  \begin{align*}
    h(\epsilon) & \lesssim (\|\mathbf{X}\|_\gamma \vee \|\mathbf{X}\|_\gamma^{N + (N - 1)^2})
      \left(\epsilon^{\lambda}\sum_{f \in \CF_{N - 1}} \left(h(\epsilon)^{N - |f|} +
      \sum_{k = 1}^{N - |f| - 1} h(\epsilon)^k\right) + 1\right)\\
    & \lesssim (\|\mathbf{X}\|_\gamma \vee \|\mathbf{X}\|_\gamma^{N + (N - 1)^2})
      \left(\epsilon^{\lambda} \left(h(\epsilon)^N + h(\epsilon)\right) + 1\right).
  \end{align*}
  Thus, denoting \(\Xi := \|\mathbf{X}\|_\gamma \vee \|\mathbf{X}\|_\gamma^{N + (N - 1)^2}\), in applying
  Lemma~\ref{lem:unif-bound} where we take \(r = 1\), 
  \[a(x) = x - c_0 \Xi,\ b(x) = c_0(x^N + x), 
    \text{ and } c(\epsilon) = \epsilon^\lambda \Xi\]
  for some constant \(c_0 > 0\), we obtain 
  \[h(c_1\Xi^{-\lambda^{-1}}(\Xi^N + \Xi + 1)^{-\lambda^{-1}}) \le c_0 \Xi + 1\]
  with \(c_1 > 0\) a small constant. We remark that the above inequality is precisely the estimate occurring in 
  the proof of Corollary~\ref{cor:h-tilde-est} with \(\epsilon^* := c_1\Xi^{-\lambda^{-1}}(\Xi^N + \Xi + 1)^{-\lambda^{-1}}\)
  and \(K := c_0 \Xi + 1\). Thus, by the same argument as Theorem~\ref{thm:non-explosion}, we obtain 
  \[\sup_{t + \tau^{Rk} \in I^{k, R}_{\le \epsilon}} \|Z_{t + \tau^{Rk}} - Z_{\tau^{Rk}}\| 
    \le (c_0 \Xi + |\CT^+_{N - 1}| + 1) t^{(1 - \theta) \gamma}\]
  whenever \(\epsilon \le \epsilon^* = c_1\Xi^{-\lambda^{-1}}(\Xi^N + \Xi + 1)^{-\lambda^{-1}}\) 
  and \(R = C \Xi\) for some sufficiently large constant \(C > 0\) so that it satisfies Equation (16) of \cite{Li:25}. 
  Consequently, setting 
  \begin{align*}
    k_0 & := \inf \{k \in \mathbb{N} : (Rk)^{\delta - 1} \le \epsilon^* \wedge (c_0 \Xi + 1)^{-\frac{1}{(1 - \theta)\gamma}}\}\\
    & \lesssim (\Xi^{((1 - \delta)\lambda)^{-1}}(\Xi^N + \Xi + 1)^{((1 - \delta)\lambda)^{-1}} \vee 
      \Xi^{-1}(\Xi + 1)^{((1 - \delta)(1 - \theta)\gamma)^{-1}})
  \end{align*}
  we have by \cite[Lemma 3.5]{Li:25} that \(\tau^{Rk} - \tau^{R(k - 1)} \ge (Rk)^{\delta - 1}\) for all
  \(k \ge k_0\). Namely, for any \(M \in \mathbb{N}\), if \(T \le \sum_{k = k_0}^M (Rk)^{\delta - 1}\), then
  \(\sup_{t \in [0, T]} \|x_t\| \le RM\). Hence, as 
  \[\sum_{k = k_0}^M (Rk)^{\delta - 1} \ge R^{\delta - 1}\delta^{-1}(M^\delta - k_0^\delta),\]
  it follows that 
  \[\left\{T \le R^{\delta - 1}\delta^{-1}(M^\delta - k_0^\delta)\right\} 
    \subseteq \left\{\sup_{t \in [0, T]} \|x_t\| \le RM \right\}.\]
  Thus, for any \(r > 0\), by setting \(M = \left\lfloor \frac{r}{R}\right\rfloor\), we have that
  \(\left(\frac{r}{R}\right)^\delta \ge M \ge \left(\frac{r}{R}\right)^\delta - 1\) and so,
  \begin{align*}
    \left\{\sup_{t \in [0, T]} \|x_t\| \le r \right\} 
    & \supseteq \left\{\sup_{t \in [0, T]} \|x_t\| \le R \left\lfloor \frac{r}{R}\right\rfloor \right\}\\
    & \supseteq \left\{T \le R^{\delta - 1}\delta^{-1}\left(\left\lfloor \frac{r}{R}\right\rfloor^\delta - k_0^\delta\right)\right\}\\
    & \supseteq \left\{C'\left(1 \vee \Xi^{\left(\frac{1}{\delta} + \frac{N + 1}{(1 - \delta)\lambda}\right) 
      \vee \left(\frac{1}{\delta} + \frac{1}{(1 - \delta)(1 - \theta)\gamma} - 1\right)}\right) \le r\right\}
  \end{align*}
  as claimed.
\end{proof}

\subsection{Beyond \texorpdfstring{\(\gamma\)}{γ}-growth}\label{sec:beyond}

Recalling the proof of Lemma~\ref{lem:h-poly-bdd}, we observe that the growth 
assumption~\ref{as:growth} was chosen so that the coefficients in the sum of the estimate provided by 
Lemma~\ref{lem:remain-Z-one} are controlled. As demonstrated by Lemma~\ref{cor:partial-bound},
these coefficients depend on the growth rates of \(\|D^n \phi\|\) and we expect that the growth 
assumption can be relaxed if one can control the growth of these coefficients by allowing for faster 
decay of the derivatives of \(\phi\). This turns out to be the case and we in this section show 
that non-explosion remains to hold for \(\phi\) with higher 
growth rate should it be compensated with decaying derivatives (of order \(\ge 2\)). 

Throughout this subsection, we take \(\mathbf{X}\) to be a branched rough path with regularity 
\(\gamma \in (\frac{1}{N + 1}, \frac{1}{N}]\) and \(Y\) be a solution of the RDE~\eqref{eq:rde-strat} 
with the coefficients \(\phi\) now satisfying the following growth assumption.

\begin{assumption}\label{as:growth2}
  Assume that there exists a non-decreasing function \(l : \mathbb{R}_+ \to [1, \infty)\) such that 
  \begin{itemize}
    \item the map \(x \mapsto \frac{l(2x)}{l(x)}\) is monotone (either increasing or decreasing) and
    \item there exists some constant \(c > 0\) such that \(l(x) \le c(1 + x)\) for all \(x \ge 0\),
  \end{itemize}
  and there exists a constant \(\alpha \in [(N - 1)\gamma, N \gamma)\) such that 
  \(\phi \in C^{N + 1}(\mathbb{R}; \CL(\mathbb{R}^d, \mathbb{R}))\) satisfies
  \(\|D\phi(x)\| \le (\log l(|x|))^{\gamma}\) and 
  \begin{equation}
    \|D^n \phi(x)\| \le l(\|x\|)^{(1 - n)\alpha},
  \end{equation}
  for all \(n \in \{0, 2, 3, \dots, N\}\) and \(x \in \mathbb{R}^d\). 
\end{assumption}

We remark that this assumption is phrased using the function \(l\) instead of directly requiring 
\(\|D^n \phi(x)\| \le c(1 + \|x\|)^{(1 - n)\alpha}\) as the latter requires stronger 
decay of the derivatives of \(\phi\). Namely, by allowing the choice of \(l\), one allows a trade 
off between the growth of \(\phi\) and the decay of its derivatives. 

Moreover, under this growth assumption,  
\(\|D\phi(x)\| \le (\log l(|x|))^{\gamma} \lesssim l(|x|)^{\frac{\delta}{N}}\) for any \(\delta > 0\). Thus, 
we can read directly from Equation~\eqref{eq:partial-bound2} and Lemma~\ref{lem:sum-leaves} that 
\begin{equation}\label{eq:partial-bound3}
  \|D^n \partial^f_X \phi(\mathbf{Y}_t)\| \lesssim l(|Y_t|)^{(1 - n)\alpha + \frac{\delta}{N} (\Lambda_0([f]) + \Lambda_1([f]))}
  \le l(|Y_t|)^{(1 - n)\alpha + \delta} 
\end{equation}
where the constant indicated by \(\lesssim\) depends on \(\delta\). However, this estimate turns out to be 
insufficient for our application when \(n = 1\) and we provide a slightly more precise estimate in 
Corollary~\ref{cor:partial-bound3}.

With this a priori estimate in mind, we proceed as in Section~\ref{sec:non-exp} by estimating the remainder 
\(\|R^{\mathbf{Y},\mathbf{1}}\|\). As the computation is similar, we relegate the details of the proof to the
appendix.

For any \(r > 0\), we define 
\begin{equation}\label{eq:tau-sigma-r}
  \tilde\tau^{r} = \inf\{t : |Y_t| \ge 2r\} \text{ and }
  \tilde\sigma^{r} = \sup\{t < \tilde\tau^{r} : |Y_t| \le r\}
\end{equation}
and we set the interval \(I_r = [\tilde\sigma^r, \tilde\tau^{r}]\) so that \(r \le |Y_t| \le 2r\) for any \(t \in I_r\). 
We have the following control on the remainder term on this interval.

\begin{lemma}\label{lem:R-est}
  Assuming assumption~\ref{as:growth2}, for any \(\delta \in (0, \frac{\gamma}{N})\), 
  there exists a constant \(K > 0\) such that for sufficiently large \(r > 0\) and interval \(I \subseteq I_r\) of size 
  \(|I| \le l(2r)^{-1}\), we have 
  \[l(2r)^{- (\alpha + \delta)} \|R^{\mathbf{Y}, \mathbf{1}}\|_{N\gamma; I} \le K.\]
\end{lemma}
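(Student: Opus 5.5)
We follow the scheme of Section~\ref{sec:non-exp}: derive a polynomial self-bound for a rescaled remainder, then close it with Lemma~\ref{lem:unif-bound}. Here there is no drift, so \(R^{\mathbf{Y},\mathbf{1}} = R^{\mathbf{Z},\mathbf{1}}\) and the \(B^\sharp\) terms in Lemma~\ref{lem:remain-Z-one} vanish. For an interval \(I \subseteq I_r\) of length \(\le \epsilon\) we set
\[h(\epsilon) := \epsilon^{\beta}\|R^{\mathbf{Y},\mathbf{1}}\|_{N\gamma; I},\]
where \(\beta\) is chosen so that at \(\epsilon = l(2r)^{-1}\) the prefactor equals \(l(2r)^{-(\alpha+\delta)}\), i.e. \(\beta = \alpha + \delta\). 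The aim is to show that \(h\) satisfies
\[h(\epsilon) \lesssim \sum_{f}\Big(\epsilon^{N\lambda'}h(\epsilon)^{N-|f|} + \sum_{k=0}^{N-|f|-1}\epsilon^{k\lambda'}h(\epsilon)^k\Big)\]
for some \(\lambda' > 0\) and all \(\epsilon \le l(2r)^{-1}\). We then extend \(h\) by a constant beyond \(l(2r)^{-1}\) as in Corollary~\ref{cor:h-tilde-est}, and apply Lemma~\ref{lem:unif-bound} to obtain a universal \(\epsilon^*\) and \(K\). Since \(l(2r)^{-1} \to 0\) as \(r \to \infty\), this gives the claim for large \(r\).

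\textbf{Coefficient bounds.} On \(I_r\) we have \(r \le |Y| \le 2r\), so every coefficient is evaluated at \(l(|Y|) \le l(2r)\). Where negative powers of \(l\) appear, we use \(l(|Y|) \ge l(r) \gtrsim l(2r)\); this comparability is exactly what the monotonicity of \(x \mapsto l(2x)/l(x)\) together with \(l(x) \le c(1+x)\) is meant to provide. Equation~\eqref{eq:partial-bound3} then gives
\[\|D^n\partial^f_X\phi(\mathbf{Y})\|_{\infty;I} \lesssim l(2r)^{(1-n)\alpha + \delta}.\]
In particular \(C^f_I \lesssim l(2r)^{(1-N+|f|)\alpha + \delta}\), and \(\|\partial^\tau_X Y\|_{\infty;I} \lesssim l(2r)^{\alpha+\delta}\).

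\textbf{Exponent bookkeeping.} We substitute these into Corollary~\ref{cor:remain-phi-Y}, Lemma~\ref{lem:remain-phi-Y} and Lemma~\ref{lem:remain-Z-one}, multiply by \(\epsilon^\beta\), and trade each power of \(l(2r)\) for \(\epsilon^{-1}\) (valid since \(\epsilon \le l(2r)^{-1}\)). The terms to check are as follows.
\begin{itemize}
\item The top-level term \(\|\partial^f_X\phi\|_\infty\epsilon^{\beta}\) with \(|f| = N-1\) is \(O(1)\).
\item The terms \(C^f_I\,(\|\partial^\tau Y\|\,\epsilon^{(|\tau|-1)\gamma})^{N-|f|}\) must come with a nonnegative total power of \(\epsilon\).
\item The \(R^{\sharp,f}\) terms are handled as in Lemma~\ref{lem:R-sharp-bd}.
\end{itemize}
The constraint \(\alpha \in [(N-1)\gamma, N\gamma)\) and \(\delta < \gamma/N\) should make each exponent of the form \(k\lambda'\) with \(\lambda' \ge 0\). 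The decay \(l^{(1-n)\alpha}\) of the higher derivatives is what compensates the almost-linear growth \(l^\alpha\) of \(\phi\) in the powers \((\|\partial^\tau Y\|\epsilon^{(|\tau|-1)\gamma})^{N-|f|}\).

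\textbf{Main obstacle.} I expect the hardest step to be the terms where \(D\phi\) (the case \(n=1\)) enters repeatedly. The crude bound \(l^{\delta}\) from \eqref{eq:partial-bound3} accumulates a factor \(l^{\delta}\) for each vertex with one child, and at the critical scaling there is no slack to absorb it. This is presumably why the paper announces a sharper Corollary~\ref{cor:partial-bound3}. My first attempt would be to track the \(\Lambda_1([f])\) vertices separately using \(\|D\phi\| \le (\log l)^{\gamma}\), which is subpolynomial. I would then absorb the resulting \((\log l(2r))^{C}\) factors either into a single \(l(2r)^{\delta}\) in \(\beta\), or into the strictly positive gap \(N\gamma - \alpha\) in the exponents. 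If the top-order terms still fail to be \(O(1)\), I would enlarge \(\beta\) by this gap rather than by \(\delta\) alone.
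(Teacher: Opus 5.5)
Your skeleton is the paper's. It uses the same rescaling $h_\delta(\epsilon)=\epsilon^{\alpha+\delta}\|R^{\mathbf{Y},\mathbf{1}}\|_{N\gamma;I}$ and the comparability $l(r)\sim l(2r)$ of Lemma~\ref{lem:l-prop}. It aims for a polynomial self-bound; the paper's version is $h_\delta\lesssim 1+\epsilon^{\delta}\sum_{k=1}^N h_\delta^k$, from Lemmas~\ref{lem:R-sharp-bd2} and~\ref{lem:h-poly-bdd2}. It closes via the constant extension of Corollary~\ref{cor:h-tilde-est} and Lemma~\ref{lem:unif-bound}.

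The problem is the step you leave open. You assert that at the critical scaling there is no slack, and you ask only for $\lambda'\ge 0$. With $\lambda'=0$ the closure fails, since Lemma~\ref{lem:unif-bound} needs $c(\epsilon)\to 0$. In fact the slack exists, and the crude bound \eqref{eq:partial-bound3-a} is all that is needed here. The logarithmic bound \eqref{eq:partial-bound3-b} is only used later, in \eqref{eq:del-Y-est2-fin}. Two facts make it work.
\begin{itemize}
\item Losses do not accumulate per vertex. The $\delta/N$ per vertex in \eqref{eq:partial-bound3} sums to at most $\delta$, so each coefficient costs a single factor $l^{\delta}$. The top-level term $\|\partial^f_X\phi(\mathbf{Y})\|_{\infty;I}\,\epsilon^{\alpha+\delta}$ is $O(1)$ precisely because $\beta$ contains that $\delta$.
\item Every other term of Lemma~\ref{lem:remain-Z-one} carries an extra factor $|t-s|^{\gamma}\le\epsilon^{\gamma}$. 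This is because the sewing error has order $(N+1)\gamma$ while the norm is taken at $N\gamma$. Since $N\delta<\gamma$, this factor absorbs up to $N$ such losses.
\end{itemize}
For example, with $|\tau|=1$ and $m=N-|f|$,
\[C^f_I\,\|\partial^\tau_X Y\|_{\infty;I}^{m}\,\epsilon^{\alpha+\gamma+\delta}\lesssim l(2r)^{\alpha+(m+1)\delta}\,\epsilon^{\alpha+\gamma+\delta}\le\epsilon^{\gamma-m\delta}\lesssim 1.\]
Likewise, the $\epsilon^{-\delta}$ lost in $R^{\sharp,f}$ (Lemma~\ref{lem:R-sharp-bd2}) is cancelled by the $\epsilon^{\delta}$ in $h_\delta$. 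This leaves $\epsilon^{(1+|f|)\gamma-k\delta}h_\delta^k\lesssim\epsilon^{\delta}h_\delta^k$ for $k\le N-1$.

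Drop your last-resort fix. The lemma prescribes the exponent $\alpha+\delta$. Spending the gap $N\gamma-\alpha$ on $\beta$ would destroy the strict inequality $\alpha+\delta<N\gamma$, which Theorem~\ref{thm:non-explosion2} needs to make the remainder contribution $o(2^n)$.

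Your trading rule also needs correcting. Replacing $l(2r)^{x}$ by $\epsilon^{-x}$ is valid only for $x\ge 0$, because the self-bound must hold for every $\epsilon\le l(2r)^{-1}$, not just at the endpoint. For $m=N-|f|\ge 2$, the bound $C^f_I\lesssim l^{(1-m)\alpha+\delta}$ has a negative exponent, since $\alpha\ge(N-1)\gamma>\delta$. It should simply be bounded by a constant. (The paper's display for this term in Lemma~\ref{lem:h-poly-bdd2} is written as if the trade were allowed.) Done correctly,
\[C^f_I\,\|R^{\mathbf{Y},\mathbf{1}}\|_{N\gamma;I}^{m}\,\epsilon^{(N-1)m\gamma+\alpha+\gamma+\delta}\lesssim\epsilon^{E}h_\delta^m,\qquad E=\gamma+(N-1)m\gamma-(m-1)(\alpha+\delta).\]
Using $\alpha<N\gamma$ gives $E>(N+1-m)\gamma-(m-1)\delta\ge\gamma-(N-1)\delta\ge\delta$. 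So the conclusion stands, and this is one of the places where the hypothesis $\alpha<N\gamma$ genuinely enters the self-bound.
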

\begin{proof}
  This result holds by the same argument as in the proof of Corollary~\ref{cor:h-tilde-est}
  where we replace the estimate from Lemma~\ref{lem:h-poly-bdd} with Lemma~\ref{lem:h-poly-bdd2}.
\end{proof}

\begin{theorem}\label{thm:non-explosion2}  
  Assuming \(\phi \in C^{N + 1}(\mathbb{R}; \CL(\mathbb{R}^d, \mathbb{R})) \simeq C^{N + 1}(\mathbb{R}; \mathbb{R}^d)\)
  satisfies Assumption~\ref{as:growth2}. 
  Then, the branched RDE~\eqref{eq:rde-strat} has a unique global solution.  
\end{theorem}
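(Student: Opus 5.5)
The plan is to run the contour argument of the introduction with exponentially growing levels \(r_n = 2^n\) and the intervals \(I_{r_n} = [\tilde\sigma^{r_n}, \tilde\tau^{r_n}]\) of \eqref{eq:tau-sigma-r}. By the standard Banach fixed point argument there is a unique maximal solution on \([0, \xi)\), so only non-explosion needs proof. Since \(\xi \ge \sum_n (\tilde\tau^{r_n} - \tilde\sigma^{r_n})\) (the intervals \(I_{r_n}\) are disjoint up to endpoints), it suffices to find a non-summable sequence \((t_n)\) with
\[\sup_{t \in [\tilde\sigma^{r_n}, (\tilde\sigma^{r_n} + t_n) \wedge \tilde\tau^{r_n}]} |Y_t - Y_{\tilde\sigma^{r_n}}| < 2^{n} \quad\text{for all large } n.\]
This forces \(\tilde\tau^{r_n} - \tilde\sigma^{r_n} > t_n\). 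The target is \(t_n \simeq \epsilon\, (\log l(2^{n+1}))^{-(N-1)/N}\). Because \(l(x) \le c(1 + x)\) gives \(\log l(2^{n+1}) \lesssim n\), this choice satisfies \(t_n \gtrsim n^{-(N-1)/N}\), which is not summable.

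\textbf{Step 1: local remainder on short pieces.} Lemma~\ref{lem:R-est} applies only on pieces of length at most \(l(2r)^{-1}\), which is far shorter than \(t_n\). On any such piece \(J \subseteq I_r\), Lemma~\ref{lem:R-est} gives \(|R^{\mathbf{Y},\mathbf{1}}_{st}| \le K\, l(2r)^{\alpha + \delta} |t-s|^{N\gamma}\). I would partition \([\tilde\sigma^r, \tilde\sigma^r + t]\) into about \(t\, l(2r)\) consecutive pieces \([s_i, s_{i+1}]\) of length \(l(2r)^{-1}\). Summing the remainders over the pieces gives a total of at most \(K t\, l(2r)^{1 + \alpha + \delta - N\gamma}\). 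This is \(o(r)\) provided \(\alpha + \delta < N\gamma\), which holds after shrinking \(\delta\), together with \(l(2r) \lesssim r\).

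\textbf{Step 2: recombining the local expansions.} On each piece,
\[\delta Y_{s_i s_{i+1}} = \sum_{\tau \in \CT^+_{N-1}} \partial^\tau_X Y_{s_i} \<\mathbf{X}_{s_i s_{i+1}}, \tau\> + R^{\mathbf{Y},\mathbf{1}}_{s_i s_{i+1}}.\]
Summing the Taylor terms piece by piece would give the useless bound \(r\, t\, l(2r)^{1-\gamma}\). Instead I would exploit cancellation through Chen's relation \(\mathbf{X}_{st} = \mathbf{X}_{su} \curvearrowright \mathbf{X}_{ut}\) and the coherence \(\partial^{[f]}_X Y = \partial^f_X \phi(\mathbf{Y})\). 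The aim is to compare the sum of the local Taylor expansions with the single expansion \(\sum_\tau \partial^\tau_X Y_{\tilde\sigma^r} \<\mathbf{X}_{\tilde\sigma^r, t}, \tau\>\) based at the left endpoint, in the spirit of a sewing argument. The defect between the two is controlled by the remainders \(R^{\phi(\mathbf{Y}), f}\) of Lemma~\ref{lem:remain-phi-Y}, which involve only derivatives \(D^n \phi\) with \(n \ge 2\) and thus carry the decay \(l^{(1-n)\alpha}\), or by \(R^{\mathbf{Y},\mathbf{1}}\), which is handled by Step 1.

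\textbf{Step 3: bounding the global expansion.} The resulting global expansion is then bounded using the sharper form of \eqref{eq:partial-bound2}: a tree \(\tau\) with \(|\tau| = k\) contributes at most
\[|\phi(Y)|\, \big(\log l(2r)\big)^{(k-1)\gamma}\, t^{k\gamma} \lesssim r \big(\log l(2r)\big)^{(k-1)\gamma} t^{k\gamma}.\]
Requiring each of these to be \(\le \epsilon r\) for \(k = 1, \dots, N\) gives exactly the choice \(t \simeq \epsilon (\log l)^{-(N-1)/N}\) above. Here the monotonicity of \(x \mapsto l(2x)/l(x)\) is used to compare \(l(|Y_t|)\) with \(l(2r)\) uniformly on \(I_r\). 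The factors \((\log l)^{\gamma}\) come from \(D\phi\) sitting at vertices with one child, that is, from \(\Lambda_1\). This is why the bound on \(D\phi\) must be logarithmic rather than polynomial: a polynomial bound would force \(t_n\) to be summable.

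I expect the main obstacle to be Step 2, namely making the cancellation between the \(\sim t\, l(2r)\) local expansions and the single global expansion rigorous while keeping the constants uniform in \(r\). The difficulty is that the sewing-type error involves \(\partial^{\tau}_X Y\) for trees with one-child vertices, which are only logarithmically controlled. I would first try the sub-band device from the introduction: split \([r, 2r]\) into bands of width \(1\) and freeze \(\phi(Y)\) on each band, so that \(\phi\) varies by at most \(\sup |D\phi| \lesssim (\log l)^{\gamma}\) across a band. The telescoping of the frozen first-order terms then reduces to \(\phi(Y_{\tilde\sigma^r})\, \delta X\). The leftover contributions should be summable thanks to \(\alpha \ge (N-1)\gamma\) and the decay \(D^n\phi \lesssim l^{(1-n)\alpha}\) for \(n \ge 2\), provided one can also bound the number of band crossings made during \([\tilde\sigma^r, \tilde\sigma^r + t]\).
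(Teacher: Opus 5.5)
Your proposal has a genuine gap at Step~2, which is where the whole content of the theorem lies, and the mechanism you propose there does not work. Write \(\Xi_{st} := \sum_{\tau \in \CT^+_{N-1}} \partial^\tau_X Y_s \langle \mathbf{X}_{st}, \tau\rangle\). Summing your local identities gives
\(\sum_i \Xi_{s_i s_{i+1}} - \Xi_{s_0 s_M} = R^{\mathbf{Y},\mathbf{1}}_{s_0 s_M} - \sum_i R^{\mathbf{Y},\mathbf{1}}_{s_i s_{i+1}}\).
So the ``defect'' you want to control is exactly the remainder on an interval of length \(t \gg l(2r)^{-1}\).

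Expanding it through \(\delta R_{sut} = \sum_f R^{\phi(\mathbf{Y}),f}_{su}\langle \mathbf{X}_{ut},[f]\rangle\) (up to truncation terms) needs \(R^{\phi(\mathbf{Y}),f}\) on long intervals. Lemma~\ref{lem:remain-phi-Y} bounds these through \(\|R^{\mathbf{Y},\mathbf{1}}\|_{N\gamma}\) on those same long intervals, not through the short-piece bound of Step~1. It is also not true that only \(D^n\phi\) with \(n \ge 2\) enter. \(R^{\sharp,\mathbf{1}}\) contains \(D\phi(Y_s)R^{\mathbf{Y},\mathbf{1}}_{st}\). For \(|f| = N-1\) one has \(R^{\phi(\mathbf{Y}),f} = \delta(\partial^f_X\phi(Y))\), which is controlled only by \(\|D\partial^f_X\phi\| \lesssim (\log l)^{\gamma\Lambda_1([f])}\). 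The argument is therefore circular: it is precisely the a priori estimate that closes only on intervals of length \(\lesssim l(2r)^{-1}\), which is why Lemma~\ref{lem:R-est} is restricted to them.

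The band fallback does not rescue this while you keep the time grid of Step~1. Summation by parts on \(\sum_i \phi(Y_{s_i})\delta X_{s_i s_{i+1}}\) costs \(\|D\phi\|_{\infty; I_r} t^\gamma \sum_i |\delta Y_{s_i s_{i+1}}|\). The discrete variation of \(Y\) along a time grid is not a priori \(O(r)\); this is your ``number of crossings'' problem.

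The missing idea, which is what the paper does, is to partition in space rather than time. Let \(\eta_k\) be the first passage of \(|Y|\) through \(2^n + k\), capped at \(\tilde\sigma_n + t\). In the case \(\tilde\tau_n \le \tilde\sigma_n + t\) that must be ruled out, this gives exactly \(2^n\) pieces with \(|Y_{\eta_{k+1}} - Y_{\eta_k}| = 1\), whatever \(Y\) does in between. Because their durations are random, the paper uses a dichotomy:
\begin{itemize}
\item if at least \(2^{n-1}\) pieces last longer than \(2^{-(n+1)}\), then \(\tilde\tau_n - \tilde\sigma_n \ge \frac14\);
\item otherwise Lemma~\ref{lem:R-est} applies to the short pieces, whose remainders add up to \(\lesssim 2^{n(1+\alpha+\delta-N\gamma)} = o(2^n)\).
\end{itemize}
Summation by parts then costs \(\|D\partial^f_X\phi\|_{\infty}\, 2^n t^{(|f|+1)\gamma} \lesssim (nt)^{(|f|+1)\gamma} 2^n\), which forces \(t_n \simeq \epsilon/n\). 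The binding constraint comes from this telescoping error, not from the base-point expansion of your Step~3. Your rate \((\log l)^{-(N-1)/N}\) is therefore unsupported, although \(1/n\) is still non-summable.

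Be aware that the paper's own displayed summation by parts is applied after taking absolute values termwise, where it does not telescope. A rigorous version has to do three things:
\begin{itemize}
\item keep signs, which is possible in one dimension since all \(Y_{\eta_{k+1}} - Y_{\eta_k}\) have the same sign;
\item account for Chen's cross terms when \(|f| \ge 1\);
\item handle the long crossings, on which the remainder is not controlled.
\end{itemize}
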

\begin{proof}
  In the following application, for brevity, we denote \((\tilde\sigma_n, \tilde\tau_n) = (\tilde\sigma^{2^n}, \tilde\tau^{2^n})\).
  We will demonstrate a bound of the form \(\tilde\tau_n - \tilde\sigma_n \ge \epsilon n^{-1}\) for some \(\epsilon > 0\) 
  and all sufficiently large \(n\) from which we can conclude non-explosion of the RDE as 
  \[\xi \ge \sum_n (\tilde\tau_n - \tilde\sigma_n) \ge \sum_n \epsilon n^{-1} = \infty.\]

  For any \(t > 0\) (to be specified later) and \(n \in \mathbb{N}\) sufficiently large 
  (so that Lemma~\ref{lem:R-est} is applicable), we introduce the sequence of times 
  \((\eta_n)_{n = 0}^{2^n}\) by setting 
  \[\eta_{k} = \inf\{t > \tilde\sigma_n : |Y_{t}| \ge 2^n + k\} \wedge (\tilde\sigma_n + t)\]
  for \(k = 0, \dots, 2^n\) so that \(\eta_0 = \tilde\sigma_n\) and \(\eta_{2^n} = \tilde\tau_n \wedge (\tilde\sigma_n + t)\).
  Moreover, we track the times \(\CT_n = \{k \in \{0, \dots, 2^n\} : \eta_{k + 1} - \eta_k > 2^{-(n + 1)}\}\).
  
  Straightaway, observing that if \(|\CT_n| \ge \frac{1}{2}2^n\), then 
  \(\tilde\tau_n - \tilde\sigma_n \ge |\CT_n| 2^{-(n + 1)} = \frac{1}{4}\) and we are done. On the other hand, 
  assuming \(|\CT_n| < \frac{1}{2}2^n\), if \(\tilde\tau_n \le \tilde\sigma_n + t\), we have
  \[\sum_{k \in \CT_n^c} |Y_{\eta_{k + 1}} - Y_{\eta_k}| = |\CT_n^c| = 2^n + 1 - |\CT_n| \ge \frac{1}{2} 2^n.\]
  Thus, in order to show \(\tilde\tau_n - \tilde\sigma_n > t\), it suffices to show that 
  \(\sum_{k \in \CT_n^c} |Y_{\eta_{k + 1}} - Y_{\eta_k}| < \frac{1}{2} 2^n\) for sufficiently large \(n\).
  By Proposition~\ref{prop:remainder} and Lemma~\ref{lem:R-est}, we observe
  \begin{align*}
    \sum_{k \in \CT_n^c} |Y_{\eta_{k + 1}} - Y_{\eta_k}| 
    & \le \sum_{k \in \CT_n^c} |R^{\mathbf{Y}, \mathbf{1}}_{\eta_k, \eta_{k + 1}}| 
        + \sum_{f \in \CF_{N - 1}}\sum_{k = 0}^{2^n - 1} |\partial^f_X \phi(Y_{\eta_k}) \<\mathbf{X}_{\eta_k, \eta_{k + 1}}, [f]\>|\\
    & \le K 2^{(\alpha + \delta) n}2^{-(n + 1)N\gamma} |\CT_n^c|
      + \sum_{f \in \CF_{N - 1}}\sum_{k = 0}^{2^n - 1} |\partial^f_X \phi(Y_{\eta_k}) \<\mathbf{X}_{\eta_k, \eta_{k + 1}}, [f]\>|
  \end{align*}
  where we choose any \(\delta \in (0, \frac{\gamma}{N})\) small enough that \(\alpha + \delta < N\gamma\) (which is possible since \(\alpha < N\gamma\)).

  Straightaway, as \(K 2^{(\alpha + \delta) n}2^{-(n + 1)N\gamma} |\CT_n^c| \lesssim 2^{-(N\gamma - (\alpha + \delta)) n} 2^n\),
  it is bounded by \(\frac{1}{4} 2^n\) for sufficiently large \(n\). Namely, it remains to bound the 
  sum in the above estimate by \(\frac{1}{4} 2^n\). Fixing \(f \in \CF_{N - 1}\), we have by 
  Corollary~\ref{cor:partial-bound3} that
  \begin{align*}
    & \sum_{k = 0}^{2^n - 1} |\partial^f_X \phi(Y_{\eta_k}) \<\mathbf{X}_{\eta_k, \eta_{k + 1}}, [f]\>|\\
    \le\ & |\partial^f_X \phi(Y_{\eta_0}) X_{\eta_0, \eta_{2^n}}| + 
      \left||\partial^f_X\phi(Y_{\eta_0})X_{\eta_0, \eta_{2^n}}|
            - \sum_{k = 0}^{2^n - 1} |\partial^f_X\phi(Y_{\eta_k}) \<\mathbf{X}_{\eta_k, \eta_{k + 1}}, [f]\>|\right|\\
    \le\ & c' (1 + 2^n) t^{\gamma} + \sum_{k = 0}^{2^n - 1} 
      \left||\partial^f_X \phi(Y_{\eta_k})| - |\partial^f_X \phi(Y_{\eta_{k + 1}})|\right||\<\mathbf{X}_{\eta_{k}, \eta_{2^n}}, [f]\>|
  \end{align*}
  for some constant \(c' > 0\) independent of \(n\). Thus, by choosing \(t \le (16c'|\CF_{N - 1}|)^{-\frac{1}{\gamma}}\),
  the first term of the above estimate is bounded by \(\frac{1}{8|\CF_{N - 1}|} 2^n\). On the other hand, 
  \begin{equation}\label{eq:del-Y-est2-fin}
    \begin{split}
      & \sum_{k = 0}^{2^n - 1} 
        \left||\partial^f_X \phi(Y_{\eta_k})| - |\partial^f_X \phi(Y_{\eta_{k + 1}})|\right||\<\mathbf{X}_{\eta_{k}, \eta_{2^n}}, [f]\>|\\
      \le\ &\sum_{k = 0}^{2^n - 1} 
        \|D\partial^f_X \phi(Y)\|_{\infty; I_r}|Y_{\eta_k} - Y_{\eta_{k + 1}}| \|\mathbf{X}\|_{\gamma} t^{(|f| + 1)\gamma}
      \lesssim 
        (\log l(2^n))^{\gamma \Lambda_1([f])} t^{\gamma (|f| + 1)} 2^n.  
    \end{split}
  \end{equation}
  Hence, as \(\Lambda_1([f]) \le |f| + 1\), by taking \(t \le \frac{\epsilon}{n}\) 
  for some fixed \(\epsilon > 0\) small enough, the right hand side of the above estimate is bounded by 
  \((8|\CF_{N - 1}|)^{-1} 2^n\). Consequently, combining the above estimates, we have shown 
  \[\sum_{k \in \CT_n^c} |Y_{\eta_{k + 1}} - Y_{\eta_k}| 
    < \frac{1}{4}2^n + \sum_{f \in \CF_{N - 1}} \left(\frac{1}{8|\CF_{N - 1}|}2^n + \frac{1}{8|\CF_{N - 1}|}2^n\right) = \frac{1}{2}2^n\]
  for sufficiently large \(n\) and \(t = \frac{\epsilon}{n} \wedge \frac{1}{\sqrt[\gamma]{16c'}}\) as claimed.
\end{proof}

We remark that the assumption that \(Y\) is one-dimensional is only used in order to ensure 
\(|Y_{\eta_k} - Y_{\eta_{k + 1}}| = 1\) to obtain the final inequality in \eqref{eq:del-Y-est2-fin}. 
Thus, the above theorem remains to hold for higher dimensional bRDEs if we can control the terms
\(\left||\partial^f_X \phi(Y_{\eta_k})| - |\partial^f_X \phi(Y_{\eta_{k + 1}})|\right|\)
in \eqref{eq:del-Y-est2-fin} directly. One way to achieve this is to instead assume an oscillation bound 
on \(\phi\) of the form
\[\Osc_r(\phi) := \sup_{\|x\| \le r} \phi(x) - \inf_{\|x\| \le r} \phi(x) \lesssim (\log l(r))^\gamma.\]

\begin{example}
  In the case \(\mathbf{X}\) is the rough path (It\^o or Stratonovich) 
  corresponding to a Brownian motion \(B\), we may take \(\gamma\) arbitrarily close to \(\frac{1}{2}\). Thus, the 
  constant \(\alpha\) in Assumption~\ref{as:growth2} can be taken to be arbitrarily close to 1. Namely, 
  Assumption~\ref{as:growth2} is satisfied if there exists some \(\epsilon > 0\),
  \begin{itemize}
    \item \(\|D\phi(x)\| \le (\log l(\|x\|))^{\frac{1}{2} - \epsilon}\),
    \item \(\|D^n\phi(x)\| \le l(\|x\|)^{(1 - n)(1 - \epsilon)}\) for \(n = 0\) and \(n = 2\).
  \end{itemize}
  Consequently, under this growth condition, the SDE (either It\^o or Stratonovich)
  \[\dd X_t = \phi(X_t) \dd B_t\]
  is strongly complete.
\end{example}

\begin{example}
  In general where \(\mathbf{X} \in \CC^\gamma([0, T], \mathbb{R}^d)\) for some 
  \(\gamma \in (\frac{1}{N + 1}, \frac{1}{N}]\), in contrast to the Brownian case, a naïve 
  application of the above theorem fails to achieve almost linear growth for \(\phi\). Namely, 
  condition~\ref{as:growth2} will assert that \(\|\phi(x)\| \le l(\|x\|)^{N\gamma}\) for which 
  \(N \gamma < 1\). Nonetheless, we can recover an almost linear growth condition by dropping a level 
  lower. 

  For any \(\gamma' \in (\frac{1}{N + 2}, \frac{1}{N + 1}]\), we have the embedding 
  \[\mathbf{X} \mapsto \mathbf{X}' : \CC^\gamma([0, T], \mathbb{R}^d) \hookrightarrow \CC^{\gamma'}([0, T], \mathbb{R}^d)\] 
  where for any \(f \in \CF_{N}\), we define \(\<\mathbf{X}'_{st}, f\> = \<\mathbf{X}_{st}, f\>\) and
  \[\<\mathbf{X}'_{st}, [f]_a\> := \int_s^t \<\mathbf{X}_{sr}, f\> \dd X_r^a\]
  where the latter integral can be understood in the Young sense. 
  
  Consequently, for any \(\epsilon > 0\), by taking \(\gamma' \in (\frac{1 - \epsilon}{N + 1}, \frac{1}{N + 1})\),  
  Assumption~\ref{as:growth2} is satisfied if 
  \(\phi \in C^{N + 2}(\mathbb{R}; \CL(\mathbb{R}^d, \mathbb{R}))\) is such that 
  \begin{itemize}
    \item \(\|D\phi(x)\| \le (\log l(\|x\|))^{\gamma'}\),
    \item \(\|D^n\phi(x)\| \le l(\|x\|)^{(1 - n)(1 - \epsilon)}\) for all \(n = 0, 2, 3 \dots, N + 1\).
  \end{itemize}
  Thus, we recover that the RDE~\eqref{eq:bRDE} has a unique global solution while allowing for 
  \(\phi\) to have almost linear growth and unbounded derivative.
\end{example}

\section{Sharpness of Theorem~\ref{thm:non-explosion}}\label{sec:sharpness}
\subsection{Pure area branched rough paths}\label{subsec:pure-area}

As our counterexamples utilize pure area branched rough paths (i.e. 
branched rough paths whose path component \(\<\mathbf{X}, \tree<X'>\>\) is identically zero), 
we first in this subsection provide some useful facts about them. 

In the case where \(N = 2\), it is well-known (cf. \cite{Friz:20}) that the L\'evy area 
of any pure area rough paths is of the form \(\delta f\) for some \(f \in C^{2\gamma}\). 
By computing the coboundary of \(\<\mathbf{X}, \tau\>\) for \(\tau \in \{\tree<2'>, \tree<10'>\}\) directly, this 
fact seemingly also extends to the case where \(N = 3\). Namely, \(\mathbf{X}\) is a \(\gamma\)-H\"older branched 
rough path with \(\gamma \in (\frac{1}{4}, \frac{1}{3}]\) and \(\<\mathbf{X}, \tree<X'>\> = 0\) if and only if
\begin{equation}\label{eq:pure-area-3}
  \mathbf{X}_{t} = \mathbf{1} + f^{\tree<1'>}(t) \tree<1'> 
    + f^{\tree<2'>}(t) \tree<2'> + f^{\tree<10'>}(t) \tree<10'>,
\end{equation}
for some \(f^{\tree<1'>} \in C^{2\gamma}([0, T], (\mathbb{R}^d)^{\otimes 2}), 
f^{\tree<2'>}, f^{\tree<10'>} \in C^{3\gamma}([0, T], (\mathbb{R}^d)^{\otimes 3})\) and we have that
\[\mathbf{X}_{st} \equiv \mathbf{X}_s^{-1} \curvearrowright \mathbf{X}_t = \mathbf{1} + (\delta f^{\tree<1'>})_{st} \tree<1'> 
  + (\delta f^{\tree<2'>})_{st} \tree<2'> + (\delta f^{\tree<10'>})_{st} \tree<10'>.\]
Note that here, testing \(\mathbf{X}_{t}\) against a labelled tree means taking the coordinate 
of the tensor component of the corresponding function, e.g. 
\(\<\mathbf{X}_{t}, \tree<2>\> = (f^{\tree<2'>}(t))_{abc}\).
We remark that \(\mathbf{X}\) defined by \eqref{eq:pure-area-3} is indeed a branched rough path 
as it is simply the level 3 truncation of \(\exp(f^{\tree<1'>}(t) \tree<1'> 
    + f^{\tree<2'>}(t) \tree<2'> + f^{\tree<10'>}(t) \tree<10'>)\).

However, it turns out this fact does not extend to the case where \(N \geq 4\). Indeed, truncating 
\(\exp(t \tree<1'>)\) at level \(N = 4\) on \(\mathbb{R}\), we obtain the pure area branched rough path defined by 
\[\mathbf{X}_t = \mathbf{1} + t\ \tree<1'> + \frac{t^2}{2}\left(\tree<1'>^2 + \tree<02'> + \tree<100'>\hspace{0.6mm} \right).\]
Then, for any \(s < u < t\), one may compute that
\[(\delta \langle\mathbf{X}, \tree<100'>\rangle)_{sut} = \<\mathbf{X}, \tree<1'>\>_{su}\<\mathbf{X}, \tree<1'>\>_{ut}
  = (u - s)(t - u) \neq 0\]
implying that \(\langle\mathbf{X}_{st}, \tree<100'>\rangle\) is not a coboundary.
Instead, we have the following weaker characterization of pure area branched rough paths.
\begin{lemma}
  Let \(N \in \mathbb{N}\), \(\gamma \in (\frac{1}{N + 1}, \frac{1}{N}]\) and \(\mathbf{X}\) be 
  a \(\gamma\)-H\"older branched rough path. Then, for any \(\tau \in \CT_N\), we have that 
  \[\<\mathbf{X}_{st}, \tau\> = \delta f_{st}\]
  for some \(f \in C^{\gamma |\tau|}([0, T], (\mathbb{R}^d)^{\otimes |\tau|})\) if 
  \(\<\mathbf{X}_t, \sigma\> = 0\) for all \(\sigma \in \CT \setminus \{\mathbf{1}\}\) with 
  \(|\sigma| \le \left\lfloor\frac{|\tau|}{2}\right\rfloor\).
\end{lemma}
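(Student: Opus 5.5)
The strategy is to use Chen's relation to show that the two-parameter increment \(\<\mathbf{X}_{st}, \tau\>\) is additive, i.e. \(\<\mathbf{X}_{st},\tau\> = \<\mathbf{X}_{su},\tau\> + \<\mathbf{X}_{ut},\tau\>\) for all \(s<u<t\). Once additivity holds, set \(f(t) := \<\mathbf{X}_{0t}, \tau\>\), so that \(\delta f_{st} = \<\mathbf{X}_{st},\tau\>\). The regularity \(f \in C^{\gamma|\tau|}\) is then read off directly from the definition of a \(\gamma\)-H\"older branched rough path, since \(|\<\mathbf{X}_{st},\tau\>| \lesssim |t-s|^{\gamma|\tau|}\). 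The tensor structure \((\mathbb{R}^d)^{\otimes|\tau|}\) simply collects the labels.

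The first step is to expand Chen's relation \(\mathbf{X}_{st} = \mathbf{X}_{su}\curvearrowright\mathbf{X}_{ut}\) against \(\tau\) using the defining property of the convolution product together with \eqref{eq:coproduct-sum-subtrees}:
\[
\<\mathbf{X}_{st},\tau\> = \<\mathbf{X}_{su}\otimes \mathbf{X}_{ut}, \Delta\tau\>
= \<\mathbf{X}_{su},\tau\> + \<\mathbf{X}_{ut},\tau\> + \sum_{\sigma\subsetneq\tau}\<\mathbf{X}_{su},\tau\setminus\sigma\>\<\mathbf{X}_{ut},\sigma\>,
\]
where the sum runs over the nonempty proper rooted subtrees \(\sigma\) and \(\tau\setminus\sigma\) is a nonempty forest. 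It therefore suffices to show that every cross term vanishes.

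The second step is the counting argument. Since \(|\sigma| + |\tau\setminus\sigma| = |\tau|\), at least one of \(|\sigma|\) and \(|\tau\setminus\sigma|\) is at most \(\lfloor|\tau|/2\rfloor\). If \(|\sigma|\le\lfloor|\tau|/2\rfloor\), then \(\<\mathbf{X}_{ut},\sigma\>=0\) by the hypothesis. The hypothesis is stated for \(\mathbf{X}_t\), so one must pass to increments. Because \(\mathbf{X}_{ut} = \mathbf{X}_u^{-1}\curvearrowright\mathbf{X}_t\), and \(S^*\) together with the convolution only produce terms built from trees of order at most \(|\sigma|\), the increments also vanish on all nonempty trees of order at most \(\lfloor|\tau|/2\rfloor\). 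To see this, argue by induction on order that \(\mathbf{X}_t\), \(\mathbf{X}_u^{-1}\) and their convolution all vanish on nonempty trees of small order. Multiplicativity extends the vanishing to forests, because a nonempty forest of order at most \(m\) contains a nonempty tree of order at most \(m\).

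In the other case, \(|\tau\setminus\sigma|\le\lfloor|\tau|/2\rfloor\). Here \(\tau\setminus\sigma\) is a nonempty forest, and by multiplicativity of the character \(\mathbf{X}_{su}\) its pairing is a product over its component trees. Each component is nonempty and has order at most \(\lfloor|\tau|/2\rfloor\), so the product vanishes.

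I expect the main obstacle to be the bookkeeping in the second step. The hypothesis concerns \(\mathbf{X}_t\), whereas the identity involves the increments \(\mathbf{X}_{su}\) and \(\mathbf{X}_{ut}\), so the passage to increments must be made carefully. The plan is to establish this passage by induction using \(\Delta\) and the antipode recursion \eqref{eq:antipole}. The remainder of the argument is mechanical.
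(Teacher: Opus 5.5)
Your argument is correct. It rests on the same combinatorial fact as the paper's: every admissible cut of $\tau$ has a side of order at most $\lfloor |\tau|/2\rfloor$, and the hypothesis plus multiplicativity kills that side. The organisation is different, though.

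The paper never passes to increments. It pairs $\mathbf{X}_s^{-1}\otimes\mathbf{X}_t$ with $\Delta\tau$ and rewrites $\langle\mathbf{X}_s^{-1},\cdot\rangle$ through the antipode, using $S\tau = -\tau + \sum a_f f$ with $\# f\ge 2$. It then shows that everything except $\langle\mathbf{X}_t,\tau\rangle - \langle\mathbf{X}_s,\tau\rangle$ vanishes, so the primitive is the one-parameter path $\langle\mathbf{X}_\cdot,\tau\rangle$ itself. You instead first transfer the vanishing to the increments, then use Chen's relation to show that the three-point coboundary of $\langle\mathbf{X}_{\cdot\cdot},\tau\rangle$ is zero. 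Your primitive $f(t)=\langle\mathbf{X}_{0t},\tau\rangle$ differs from the paper's by the constant $\langle\mathbf{X}_0,\tau\rangle$.

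Each route buys something. Yours only uses vanishing of the increments on small trees. That is a weaker hypothesis, invariant under $\mathbf{X}_t\mapsto g\curvearrowright\mathbf{X}_t$, and it is the same coboundary test the paper applies in its $N=4$ example. The paper's route directly produces $\langle\mathbf{X}_\cdot,\tau\rangle$ as the primitive, which is what Corollary~\ref{cor:pure-area} uses to write $\mathbf{X}_t = \mathbf{1} + \sum f^\tau(t)\tau$.

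The transfer step you single out as the main obstacle needs no induction. Pair $\mathbf{X}_u^{-1}\curvearrowright\mathbf{X}_u = \mathbf{1}$ with a nonempty tree $\rho$ of order at most $\lfloor|\tau|/2\rfloor$. Every term other than $\langle\mathbf{X}_u^{-1},\rho\rangle$ carries a factor $\langle\mathbf{X}_u,\sigma\rangle$ with $\sigma$ a nonempty tree of order at most $|\rho|$, so $\langle\mathbf{X}_u^{-1},\rho\rangle = 0$ immediately. The same expansion with $\mathbf{X}_t$ in place of $\mathbf{X}_u$ then handles $\mathbf{X}_{ut}$.
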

\begin{proof}
  We will show that \(\<\mathbf{X}_{st}, \tau\> = \delta \<\mathbf{X}, \tau\>_{st}\) for which 
  the regularity condition follows directly from the regularity of the branched rough path \(\mathbf{X}\).

  Recalling~\eqref{eq:coproduct-sum-subtrees} that the coproduct \(\Delta\) of a tree \(\tau\) can be 
  expressed as
  \[\Delta \tau = \tau \otimes \mathbf{1} + \mathbf{1} \otimes \tau + 
    \sum_{\sigma \subsetneq \tau} (\tau \setminus \sigma) \otimes \sigma\]
  where the sum is over all proper rooted subtrees \(\sigma\) of \(\tau\), we have that
  \begin{align*}
    \<\mathbf{X}_{st}, \tau\> & = \<\mathbf{X}_s^{-1} \curvearrowright \mathbf{X}_t, \tau\> 
      = \<\mathbf{X}_s^{-1} \otimes \mathbf{X}_t, \Delta \tau\> \\
    & = \<\mathbf{X}_t, \tau\> + \<\mathbf{X}_s, S \tau\>
      + \sum_{\sigma \subsetneq \tau} \<\mathbf{X}_s, S(\tau \setminus \sigma)\> \<\mathbf{X}_t, \sigma\>.
  \end{align*}
  On the other hand, by observing that for any \(\tau \in \CT\),
  \begin{align*}
    0 & = \nabla (S \otimes \text{id}) \Delta \tau
        = \nabla (S \otimes \text{id}) \left(\tau \otimes \mathbf{1} + \mathbf{1} \otimes \tau + 
            \sum_{\sigma \subsetneq \tau} (\tau \setminus \sigma) \otimes \sigma\right)\\
      & = S \tau + \tau + \sum_{\sigma \subsetneq \tau} S(\tau \setminus \sigma) \sigma,
  \end{align*}  
  it follows by a simple induction argument that 
  \[S \tau = -\tau + \sum_{f \in \CI_{\tau}} a_f f\]
  for some \(\CI_{\tau} \subseteq \CF_{|\tau|} \cap \{f \in \CF : 1 < \# f\}\) and \(a_f \in \mathbb{Z}\) for each \(f \in \CI_{\tau}\).
  Thus, we have that
  \begin{align*}
    \<\mathbf{X}_{st}, \tau\> 
    =\ & \<\mathbf{X}_t, \tau\> - \<\mathbf{X}_s, \tau\>
        + \sum_{f \in \CI_{\tau}} a_f \<\mathbf{X}_s, f\>\\
    &  + \sum_{\sigma \subsetneq \tau} \<\mathbf{X}_s, - \tau \setminus \sigma + 
            \sum_{f \in \CI_{\tau \setminus \sigma}} a_f f\> \<\mathbf{X}_t, \sigma\>.
  \end{align*}
  Now, as for any \(f = \prod_{i = 1}^k f_i \in \CI_\tau, k \ge 2\), there exists \(i\) such that 
  \(|f_i| \le \left\lfloor\frac{|\tau|}{2}\right\rfloor\), by the assumption on \(\mathbf{X}\), it follows that
  \(\<\mathbf{X}_s, f\> = \prod_{i = 1}^k \<\mathbf{X}_s, f_i\> = 0\)
  and the first sum above vanishes. Similarly, by the same reason, the second sum also vanishes and thus,
  \[\<\mathbf{X}_{st}, \tau\> = \<\mathbf{X}_t, \tau\> - \<\mathbf{X}_s, \tau\> = \delta \<\mathbf{X}, \tau\>_{st}\]
  as claimed.
\end{proof}

\begin{corollary}\label{cor:pure-area}
  Let \(N \in \mathbb{N}\), \(\gamma \in (\frac{1}{N + 1}, \frac{1}{N}]\) and 
  \(\mathbf{X} : [0, T] \to G_N(\CH)\) be such that \(\<\mathbf{X}, \sigma\> = 0\) for all 
  \(\sigma \in \CT_{\left\lfloor\frac{N}{2}\right\rfloor} \setminus \{\mathbf{1}\}\). 
  Then, \(\mathbf{X}\) is a \(\gamma\)-H\"older branched rough path if and only if 
  for any \(\tau \in \CT_N \setminus \CT_{\left\lfloor\frac{N}{2}\right\rfloor}\), there 
  exists \(f^\tau \in C^{\gamma|\tau|}([0, T], (\mathbb{R}^d)^{\otimes |\tau|})\) such that
  \[\mathbf{X}_t = \mathbf{1} + \sum_{\tau \in \CT_N \setminus \CT_{\left\lfloor\frac{N}{2}\right\rfloor}} f^\tau(t) \tau.\]
  Moreover, we have that 
  \[\mathbf{X}_{st} \equiv \mathbf{X}_s^{-1} \curvearrowright \mathbf{X}_t 
    = \mathbf{1} + \sum_{\tau \in \CT_N \setminus \CT_{\left\lfloor\frac{N}{2}\right\rfloor}} (\delta f^\tau)_{st} \tau.\]
\end{corollary}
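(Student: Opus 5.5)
We treat the two directions separately, using the preceding lemma for the main computation. Throughout, $\mathbf{X}_t$ is a character, so $\<\mathbf{X}_t, f\> = \prod_{\tau \in f} \<\mathbf{X}_t, \tau\>^{f_\tau}$.

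\emph{The expansion of $\mathbf{X}_t$.} By hypothesis $\<\mathbf{X}_t, \sigma\> = 0$ for every nonempty $\sigma$ with $|\sigma| \le \lfloor N/2\rfloor$. Take any forest $f \in \CF_N$ with $\# f \ge 2$. Its components have orders summing to at most $N$, so at least one component has order at most $\lfloor N/2 \rfloor$. Hence $\<\mathbf{X}_t, f\> = 0$. The only surviving coordinates of $\mathbf{X}_t$ in $G_N(\CH)$ are therefore $\mathbf{1}$ and single trees $\tau$ with $\lfloor N/2\rfloor < |\tau| \le N$. Setting $f^\tau(t) := \<\mathbf{X}_t, \tau\>$ gives
\[
\mathbf{X}_t = \mathbf{1} + \sum_{\tau \in \CT_N \setminus \CT_{\lfloor N/2\rfloor}} f^\tau(t)\, \tau .
\]
The same expansion holds for $\mathbf{X}_{st}$. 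For trees $\tau$ with $|\tau| \le \lfloor N/2 \rfloor$ we have $\lfloor |\tau|/2\rfloor \le \lfloor N/2\rfloor$, so the previous lemma applies. It gives $\<\mathbf{X}_{st},\tau\> = \delta\<\mathbf{X},\tau\>_{st} = 0$, and the character property again kills every forest with $\#f\ge 2$.

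\emph{The formula for $\mathbf{X}_{st}$.} Every $\tau \in \CT_N\setminus\CT_{\lfloor N/2\rfloor}$ has $|\tau| \le N$, so $\lfloor |\tau|/2\rfloor \le \lfloor N/2\rfloor$. The previous lemma (its proof uses only the vanishing hypothesis and the coproduct/antipode structure) then gives $\<\mathbf{X}_{st}, \tau\> = (\delta f^\tau)_{st}$, which is the stated formula.

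\emph{Only if.} If $\mathbf{X}$ is a $\gamma$-H\"older branched rough path, then $|\delta f^\tau_{st}| = |\<\mathbf{X}_{st},\tau\>| \lesssim |t-s|^{\gamma|\tau|}$, so $f^\tau \in C^{\gamma|\tau|}$.

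\emph{If.} Suppose $\mathbf{X}$ has the given form with $f^\tau \in C^{\gamma|\tau|}$. By the expansion and the formula above, $\<\mathbf{X}_{st}, f\>$ vanishes unless $f = \mathbf{1}$ or $f = \tau$ with $|\tau| > \lfloor N/2\rfloor$. In the latter case $|\<\mathbf{X}_{st}, \tau\>| = |\delta f^\tau_{st}| \lesssim |t-s|^{\gamma|\tau|}$, which is the required H\"older bound for every $f \in \CF_N$. The map $\mathbf{X} : [0,T]\to G_N(\CH)$ is given in the hypothesis, so no separate check of the algebraic (character) structure is needed.

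\emph{Main obstacle.} The only delicate point is the combinatorial fact that any forest of total order at most $N$ with at least two components has a component of order at most $\lfloor N/2\rfloor$. This fact drives both the collapse of the expansion and, inside the previous lemma, the vanishing of the antipode correction terms. Beyond that, one must check that the previous lemma's hypothesis with threshold $\lfloor |\tau|/2\rfloor$ is implied by the global threshold $\lfloor N/2\rfloor$, which holds because $|\tau|\le N$.
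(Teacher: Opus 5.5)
Your proof is correct. For the ``only if'' direction and for the formula for \(\mathbf{X}_{st}\), it is the paper's argument: apply the preceding lemma, whose hypothesis with threshold \(\lfloor |\tau|/2\rfloor\) is implied by the global threshold \(\lfloor N/2\rfloor\).

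The difference is in the ``if'' direction. The paper proves it by observing that \(\mathbf{1} + \sum_\tau f^\tau(t)\tau\) is the level-\(N\) truncation of \(\exp\bigl(\sum_\tau f^\tau(t)\tau\bigr)\). This certifies that the formula defines a group-like element, and the paper leaves the H\"older bounds implicit. You instead take \(G_N(\CH)\)-valuedness from the hypothesis and spell out the H\"older verification. You also correctly note that the lemma's proof is purely algebraic. That remark is genuinely needed, since the lemma is stated for a branched rough path and so cannot be quoted verbatim in this direction.

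Your reading is faithful to the statement as written, but the paper's version buys more. It is what justifies, in the exploding examples of Section~\ref{sec:sharpness}, \emph{defining} \(\mathbf{X}_t\) by such a formula and then calling it a branched rough path. You can recover this with your own combinatorial fact and no exponential. Extend \(\tau \mapsto f^\tau(t)\), set to zero on nonempty trees of order at most \(\lfloor N/2\rfloor\), multiplicatively to a character of \(\CH\). This character vanishes on every forest in \(\CF_N\) with at least two components, because one component is small. Hence it agrees with the given formula on \(\CF_N\), which shows the formula defines an element of \(G_N(\CH)\) for arbitrary \(f^\tau\).
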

\begin{proof}
  The forward implication is immediate by the previous lemma while the backward implication 
  follows directly by observing that \(\mathbf{X}_t\) is the level \(N\) truncation of
  \(\exp\left(\sum_{\tau \in \CT_N \setminus \CT_{\left\lfloor\frac{N}{2}\right\rfloor}} f^\tau(t) \tau\right)\).
\end{proof}

\subsection{Examples of branched RDEs which explode in finite time}

Leveraging on the construction of the pure area branched rough paths above, we in this subsection
provide some examples of branched RDEs of level \(N\) with vector fields with growth of
polynomial order \(\frac{1}{N} + \epsilon\) for which the solution explodes in finite time.
In particular, these examples demonstrate that the growth condition in Theorem~\ref{thm:non-explosion}
is essentially sharp in the case where \(\gamma = \frac{1}{N}\) and \(\kappa = 0\).

For any multi-index \(\alpha = (a_0, \dots, a_n)\), we take \(f_\alpha \in C^n_b(\mathbb{R}; \mathbb{R})\)
such that \(f_\alpha^{(k)}(0) = a_k\) for any \(0 \le k \le n\). As an example, one may take
\(f_\alpha(x) = \tilde f_\alpha(x) \chi(x)\) with \(\tilde f_\alpha(x) := \sum_{k = 0}^n a_k x^k\)
and \(\chi\) a smooth cutoff function satisfying \(\chi(x) = 1\) for \(|x| \le 1\) and \(\chi(x) = 0\)
for \(|x| \ge 2\). Moreover, we will fix some \(\epsilon, \delta > 0\) with \(\epsilon > (N - 1)\delta\).

We first consider the vector field \(\phi : \mathbb{R}^2 \to \mathbb{R}^2\) defined by
\[\phi(x) := (x_1^{\frac{1}{N} + \epsilon} f_{e_{N - 1}}(x_2), x_1^{\frac{1}{N} - \delta} f_{e_0}(x_2))^T\]
where \(e_k \in \mathbb{R}^{N}\) denotes the multi-index with \(1\) at the \(k + 1\)-th position and \(0\)
elsewhere\footnote{We shifted the numeration by one so that the \(k\)-th index aligns with the
\(k\)-th derivative}. It is easy to see that \(\phi\) and its derivatives have polynomial growth of order
\(\frac{1}{N} + \epsilon\) and \(\frac{1}{N} - \delta\) respectively. Then, we consider the branched RDE
\[\dd \mathbf{Y}_t = \phi(\mathbf{Y}_t) \dd \mathbf{X}_t,\ Y_0 = (1, 0)^T \in \mathbb{R}^2\]
driven by the one-dimensional pure area branched
rough path \(\mathbf{X}_t\) defined by
\[\mathbf{X}_t = \mathbf{1} + t[(\tree<X'>)^{N - 1}] = \mathbf{1} + t \overbrace{\tree<paw'>}^{N - 1}.\]
We remark that \(\mathbf{X}\) is well-defined as a branched rough path by Corollary~\ref{cor:pure-area}.
By Equation~\eqref{eq:deriv-comp-def} and the definition of the branched rough integral,
the branched RDE is equivalent to the ODE defined by
\[\dot Y_t = \frac{1}{(N - 1)!} D^{N - 1}\phi(Y_t)[(\phi(Y_t))^{\otimes (N - 1)}],\ Y_0 = (1, 0)^T \in \mathbb{R}^2.\]
Namely, written component-wise, we have that
\[\dot Y_t^k = \frac{1}{(N - 1)!} \sum_{i = 1}^{N - 1} \sum_{j_i = 1}^2 \partial_{j_1, \ldots, j_{N - 1}}^{N - 1}
    \phi_k(Y_t) \phi_{j_1}(Y_t) \cdots \phi_{j_{N - 1}}(Y_t),\ k = 1, 2.\]
Now, observing that for any \(x = (x_1, 0)^T\) with \(x_1 > 0\),
\(\phi(x) = (0, x_1^{\frac{1}{N} - \delta})^T\), we have that
\[\sum_{i = 1}^{N - 1} \sum_{j_i = 1}^2 \partial_{j_1, \ldots, j_{N - 1}}^{N - 1}
    \phi_k(Y_t) \phi_{j_1}(Y_t) \cdots \phi_{j_{N - 1}}(Y_t)
  = x_1^{(N - 1)\left(\frac{1}{N} - \delta\right)} \partial^{N - 1}_{2, \ldots, 2} \phi_k(x).\]
Consequently, it follows that the ODE is equivalent to the system
\(\dot Y_t^1 = \frac{1}{(N - 1)!}(Y^1_t)^{1 + (\epsilon - (N - 1)\delta)}\) and \(\dot Y_t^2 = 0\) with initial condition \(Y_0 = (1, 0)^T\)
which explodes in finite time as \(\epsilon - (N - 1)\delta > 0\).

\vspace{2mm}
We provide another example: Define the vector field \(\phi : \mathbb{R}^N \to \mathbb{R}^N\) by
\[\phi(x)_k := \begin{cases}
  x_1^\beta f_{e_1}(x_{k + 1}) & k < N, \\
  x_1^\beta f_{e_0}(x_N) & k = N,
\end{cases}\]
for some \(\beta > \frac{1}{N}\).
Again, by Corollary~\ref{cor:pure-area}, we may construct the pure area branched rough path on
\(\mathbb{R}\) by setting
\[\mathbf{X}_t = \mathbf{1} + t \underbrace{\tree<lin'>}_{N} = \mathbf{1} + t [-]^N(\mathbf{1})\]
where \([-]^N\) denotes the \(N\)-fold application of the rooting operator \([-]\).
We claim that the branched RDE
\begin{equation}\label{eq:exploding-rde}
  \dd \mathbf{Y}_t = \phi(\mathbf{Y}_t) \dd \mathbf{X}_t
\end{equation}
with initial condition \((1, 0, \dots, 0) \in \mathbb{R}^N\) explodes in finite time.

Indeed, by Equation~\eqref{eq:deriv-comp-def} and the definition of the branched rough integral,
the branched RDE is equivalent to the system of ODEs
\[\dd Y_t = (D\phi(Y_t))^{N - 1} \phi(Y_t) \dd t\]
with the same initial condition. Moreover, by the construction of \(f_\alpha\), we observe that, for any \(x_1 > 0\),
\(x = (x_1, 0, \dots, 0)\)
\[D\phi(x) = \begin{pmatrix}
0 & x_1^{\beta} & 0 & \cdots & 0 \\
0 & 0 & x_1^{\beta} & \cdots & 0 \\
\vdots & \vdots & \ddots & \ddots & \vdots \\
0 & 0 & \cdots & 0 & x_1^{\beta} \\
\beta x_1^{\beta-1} & 0 & \cdots & 0 & 0
\end{pmatrix},\]
from which we obtain \((D\phi(x))^{N - 1} \phi(x) = (x_1^{N\beta}, 0, \dots, 0)^T\). Thus,
the RDE~\eqref{eq:exploding-rde} is equivalent to the ODE
\(\dd Y_t^1 = (Y_t^1)^{N\beta} \dd t, Y_0^1 = 1\) and \(Y_t^k = 0\) for all \(k > 1\) which explodes in
finite time as \(N\beta > 1\).

\appendix
\section{Proof of Lemma~\ref{lem:R-est}}\label{sec:appendixA}
We will in this section complete the proof of Theorem~\ref{thm:non-explosion2} by estimating the 
\(N\gamma\)-H\"older norm of the remainder \(R^{\mathbf{Y}, \mathbf{1}}\) using a similar strategy as  
presented in Section~\ref{sec:non-exp}.

Throughout this section, we denote \(I_r = [\tilde\sigma^r, \tilde\tau^{r}]\) with \(\tilde\sigma^r\) and \(\tilde\tau^{r}\) 
defined as in Equation~\eqref{eq:tau-sigma-r} for some \(r > 0\). Moreover, we 
take \(I = I_\epsilon \subseteq I_r\) to be an interval of size smaller than \(\epsilon > 0\).

\begin{lemma}\label{lem:l-prop}
  For \(l : \mathbb{R}_+ \to [1, \infty)\) as described in Assumption~\ref{as:growth2}, for any \(r > 0\), 
  there exists a constant \(c > 0\) such that
  \[cl(2^{n + 1} r) \le l(2^n r) \le l(2^{n + 1}r).\]
\end{lemma}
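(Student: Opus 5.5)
The upper inequality \(l(2^n r) \le l(2^{n+1} r)\) is immediate from \(l\) being non-decreasing, so all the work is in the lower bound. That bound asks for a constant \(c > 0\), depending on \(r\) but not on \(n\), such that \(\frac{l(2^{n+1} r)}{l(2^n r)} \le c^{-1}\) for all \(n\). Set \(g(x) := \frac{l(2x)}{l(x)}\), which is well defined and at least \(1\) because \(l \ge 1\) and \(l\) is non-decreasing. The goal becomes \(\sup_{n \ge 0} g(2^n r) < \infty\). Since Assumption~\ref{as:growth2} says \(g\) is monotone, I will split into two cases according to the direction of monotonicity.

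\textbf{Case 1: \(g\) is non-increasing.} Then \(g(2^n r) \le g(r) = \frac{l(2r)}{l(r)}\) for all \(n \ge 0\), and this is finite. We can therefore take \(c = \frac{l(r)}{l(2r)}\).

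\textbf{Case 2: \(g\) is non-decreasing.} This is the main obstacle, since now \(g(2^n r)\) could a priori grow with \(n\). Here I will use the linear growth bound \(l(x) \le c_0(1+x)\), where \(c_0\) denotes the constant from Assumption~\ref{as:growth2}. Suppose, for contradiction, that \(g(2^n r) \to L\) with \(L \in (2, \infty]\). Then there is \(n_0\) such that \(g(2^n r) \ge q\) for all \(n \ge n_0\), for some fixed \(q > 2\). Telescoping gives
\[
l(2^{n} r) \ge q^{\,n-n_0}\, l(2^{n_0} r) \ge q^{\,n-n_0}.
\]
On the other hand, the growth bound gives \(l(2^n r) \le c_0(1 + 2^n r)\). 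Since \(q > 2\), the exponential lower bound eventually exceeds this linear upper bound, which is a contradiction. Hence \(\lim_{n} g(2^n r) = L \le 2\). Because \(g\) is non-decreasing, the limit bounds every term, so \(g(2^n r) \le 2\) for all \(n\), and we can take \(c = \tfrac12\).

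Combining the two cases, the constant \(c = \min\{\tfrac12, \frac{l(r)}{l(2r)}\}\) works in either case. It depends only on \(r\) (and, in Case 1, on \(l\) through \(\frac{l(r)}{l(2r)}\)), not on \(n\), which is exactly what the statement requires.
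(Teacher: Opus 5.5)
Your proof is correct, and it takes a more direct route than the paper's.

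The paper does not split into cases. Its argument runs as follows:
\begin{itemize}
\item It converts the linear bound into $\int^\infty l^{-1} = \infty$.
\item The integral test and Cauchy condensation then give $\sum_n 2^n r / l(2^n r) = \infty$.
\item Monotonicity of $x \mapsto l(2x)/l(x)$ makes the ratio $a_{n+1}/a_n = 2\, l(2^n r)/l(2^{n+1} r)$ convergent.
\item The ratio test then forces $\lim_n l(2^n r)/l(2^{n+1} r) \ge \tfrac12$.
\end{itemize}

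You instead compare directly. A dyadic ratio eventually above some $q > 2$ would, by telescoping, make $l(2^n r)$ grow like $q^n$, which the bound $l(x) \le c_0(1+x)$ forbids. The mechanism is the same in both: at most linear growth caps the dyadic ratio at $2$ asymptotically.

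Your version has three advantages. It is more elementary. It gives explicit constants: $c = \tfrac12$ when the ratio map is non-decreasing, and $c = l(r)/l(2r)$ when it is non-increasing. And your case split makes transparent the passage from an asymptotic statement to a constant valid for every $n$, which the paper leaves implicit after obtaining only the limit.

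What the paper's formulation buys is generality: it uses only the Osgood-type condition $\int^\infty l^{-1} = \infty$, not the linear bound itself. Your argument adapts to that weaker hypothesis with one change. Note that $l(2^n r) \ge q^{n-n_0}$ with $q > 2$ makes $\sum_n 2^n r / l(2^n r)$ converge, which contradicts the condensed form of that condition.
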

\begin{proof}
  Straightaway, as \(l\) is non-decreasing, \(l(2^n r) \le l(2^{n + 1}r)\) and it remains to show that there exists 
  some \(c >0\) such that \(cl(2^{n + 1} r) \le l(2^n r)\). 

  As \(\int_0^\infty \frac{1}{l(t)} \dd t \ge \int_0^\infty c^{-1}(1 + r)^{-1} \dd r = \infty\), we have by the integral test and 
  the Cauchy condensation test that \(\sum_n a_n := \sum_n \frac{2^n r}{l(2^n r)} = \infty\). Now, as 
  \[\left|\frac{a_{n + 1}}{a_n}\right| = \left|\frac{2^{n + 1}r}{l(2^{n + 1} r)} \frac{l(2^n r)}{2^n r}\right| 
    = 2\left|\frac{l(2^n r)}{l(2(2^n r))}\right|\]
  and the map \(x \mapsto \frac{l(2x)}{l(x)}\) is monotone, the limit 
  \(\lim_{n \to \infty}\left|\frac{a_{n + 1}}{a_n}\right|\) exists. Thus, by the ratio test, 
  \(\lim_{n \to \infty}\left|\frac{l(2^n r)}{l(2(2^n r))}\right| \ge \frac{1}{2}\) providing the 
  desired inequality. 
\end{proof}

With this, we may estimate the derivatives of \(\phi(Y)\) on \(I_r\).

\begin{corollary}\label{cor:partial-bound3}
  For any \(f \in \CF_{N - 1}\) and \(n \le N - |f|\), we have that 
  \begin{equation}\label{eq:partial-bound3-a}
    \|D^n \partial^f_X \phi(Y)\|_{\infty; I_r} \lesssim l(r)^{(1 - n)\alpha + \delta}
  \end{equation}
  for any \(r, \delta > 0\). Moreover, in the special case where \(n = 1\), we have that 
  \begin{equation}\label{eq:partial-bound3-b}
    \|D\partial^f_X \phi(Y)\|_{\infty; I_r} \lesssim (\log l(r))^{\gamma \Lambda_1([f])}. 
  \end{equation}
\end{corollary}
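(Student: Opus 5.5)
The plan is to redo the induction of Lemma~\ref{cor:partial-bound2} under Assumption~\ref{as:growth2}, tracking the first derivative of \(\phi\) separately. We then transfer the pointwise bounds to \(I_r\) using Lemma~\ref{lem:l-prop}.

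Since \(r \le |Y_t| \le 2r\) on \(I_r\) and \(l\) is non-decreasing, every bound of the form \(l(|Y_t|)^{p}\) with \(p \ge 0\) is at most \(l(2r)^{p} \le c^{-p} l(r)^{p}\), by Lemma~\ref{lem:l-prop} with \(n=0\). Likewise \(\log l(|Y_t|) \le \log l(2r) \le \log l(r) + \log c^{-1} \lesssim \log l(r)\), using \(l \ge 1\) and assuming \(r\) is large so that \(\log l(r)\) is bounded below. For negative exponents \(p\) we use instead \(l(|Y_t|)^{p} \le l(r)^{p}\). It therefore suffices to prove the pointwise bounds
\[
\|D^n \partial^f_X\phi(\mathbf{Y}_t)\| \lesssim l(|Y_t|)^{(1-n)\alpha+\delta}, \qquad \|D\partial^f_X\phi(\mathbf{Y}_t)\| \lesssim (\log l(|Y_t|))^{\gamma\Lambda_1([f])}.
\]

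\textbf{First bound \eqref{eq:partial-bound3-a}.} This is essentially \eqref{eq:partial-bound3}. Define \(\alpha_k = (1-k)\alpha\) for \(k \neq 1\), and replace \(l^{\alpha_1}\) by \((\log l)^\gamma \lesssim l^{\delta/N}\). Repeat the Leibniz-rule induction of Lemma~\ref{cor:partial-bound2}. Each factor \(D^{k+m_0}\phi\) contributes \(l^{(1-k-m_0)\alpha}\), except when \(k+m_0=1\), where it contributes \(l^{\delta/N}\). Summing the exponents via Lemma~\ref{lem:sum-leaves} with \(\beta=1\) gives \(\alpha(1-n)\), plus at most \(\frac{\delta}{N}\cdot\#\{\text{vertices}\} \le \delta\) extra. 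One caveat is that when \(k+m_0=1\) we lose the \(-\alpha\) shift, so the exponent is larger than \((1-n)\alpha\). However, the only place this happens is a single-child vertex or a single differentiation at a leaf. I will handle it by noting that \(l^{(1-1)\alpha} = 1\), so replacing \(1\) by \(l^{\delta/N}\) is consistent with the bookkeeping of Lemma~\ref{lem:sum-leaves} with \(\alpha_1 = 0\). The bound then follows.

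\textbf{Second bound \eqref{eq:partial-bound3-b}.} This is the refined case \(n=1\), and it is the main obstacle. Write \(f=\prod_{i=1}^k[h_i]\), so that \(\partial^f_X\phi = c\,D^k\phi(Y)[\partial^{h_1}_X\phi,\dots,\partial^{h_k}_X\phi]\). Differentiating once gives the term \(D^{k+1}\phi[\cdots]\) plus the terms \(D^k\phi[\dots, D\partial^{h_i}_X\phi,\dots]\). I will induct on \(f\), proving the stronger statement
\[
\|D\partial^f_X\phi\| \lesssim (\log l)^{\gamma\Lambda_1([f])}\, l^{0}.
\]
The needed bookkeeping is that the \(l\)-exponent of \(\partial^{h}_X\phi\) is exactly \(\alpha\cdot\bigl(\sum_j\Lambda_j([h])(1-j)\bigr) = \alpha\) by Lemma~\ref{lem:sum-leaves}, with the \(\Lambda_1\) vertices contributing \(\log\) factors and no powers of \(l\).

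\textbf{Term \(D^{k+1}\phi[\cdots]\).} This has exponent \(-k\alpha + k\alpha = 0\) when \(k \ge 1\), since \(k+1 \ge 2\) and the decay assumption applies. Its \(\log\) factor is \((\log l)^{\gamma\sum_i\Lambda_1([h_i])} \le (\log l)^{\gamma\Lambda_1([f])}\).

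\textbf{Terms \(D^k\phi[\dots, D\partial^{h_i}_X\phi,\dots]\).} Here the factor \(\|D^k\phi\|\) is \(l^{(1-k)\alpha}\) if \(k\ge2\), and \((\log l)^\gamma\) if \(k=1\); the latter is exactly the case where the root of \([f]\) is a \(\Lambda_1\) vertex, giving the extra log. The other factors contribute \(l^{(k-1)\alpha}\), and by induction \(D\partial^{h_i}_X\phi\) contributes \(l^0\). The total \(l\)-exponent is therefore \(0\), and the log powers add up to \(\Lambda_1([f])\).

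The base case is \(f=\mathbf{1}\), where \(\|D\phi\| \le (\log l)^\gamma\) and \(\Lambda_1([\mathbf{1}])=0\). This is slightly off, since \(\Lambda_1(\bullet) = 0\) yet \(D\phi\) carries one log factor. I will therefore prove the bound with exponent \(\gamma(\Lambda_1([f])+1)\) if necessary, which is harmless in \eqref{eq:del-Y-est2-fin} since \(\Lambda_1([f])\le|f|+1\) is all that is used there. Alternatively, \(\Lambda_1\) can be interpreted with the root counted via the convention \(\Lambda_1([f]) \ge \mathbf{1}_{\#f=1}\).
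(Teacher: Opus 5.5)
Your route is the paper's. Part (a) is the vertex-counting Leibniz induction behind Lemma~\ref{cor:partial-bound2} and \eqref{eq:partial-bound3}, with \(\|D\phi\|\lesssim l^{\delta/N}\), moved to \(I_r\) via Lemma~\ref{lem:l-prop}, and it is fine. Part (b) is the paper's two-step induction: first \(\|\partial^h_X\phi\|\lesssim(\log l)^{\gamma\Lambda_1([h])}l^\alpha\), then one Leibniz derivative split into the \(D^{k+1}\phi\) term and the terms \(D^k\phi[\dots,D\partial^{h_j}_X\phi,\dots]\).

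The base-case mismatch you noticed is real, and the paper's proof has exactly the same defect; its ``clearly true for \(f=\mathbf 1\)'' is a slip. Lemma~\ref{lem:sum-leaves} forces \(\Lambda_1([\mathbf 1])=0\). So the case \(f=\mathbf 1\) of (b) asserts \(\|D\phi\|_{\infty;I_r}\lesssim 1\), which \(\|D\phi\|\le(\log l)^\gamma\) does not give. The mismatch propagates: for \(f=[\mathbf 1]\), \(D\partial^f_X\phi\) contains \(D\phi\cdot D\phi\), which is controlled only by \((\log l)^{2\gamma}\), whereas \(\Lambda_1([[\mathbf 1]])=1\). In general this bookkeeping yields the exponent \(\gamma(\Lambda_1([f])+1)\); the extra logarithm appears whenever the derivative falls on a leaf, which is possible for every \(f\). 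So as it stands your proposal, like the paper's argument, proves only this weaker form of (b), not the statement as written.

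The weaker form does suffice in \eqref{eq:del-Y-est2-fin}, but the inequality needed there becomes \(\Lambda_1([f])+1\le|f|+1\), i.e.\ \(\Lambda_1([f])\le|f|\). This holds because \([f]\) has \(|f|+1\) vertices, at least one of which is a leaf. The inequality \(\Lambda_1([f])\le|f|+1\) that you quote is not enough after the shift.

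To get (b) exactly as stated, you need an ingredient absent from both proofs: for \(N\ge2\), the remaining hypotheses force \(D\phi\) to be bounded. Apply Taylor's formula at \(x\) with step \(h=l(|x|)^\alpha\), taken away from the origin. This gives
\[
\|D\phi(x)\|\le\frac{\|\phi(x\pm h)\|+\|\phi(x)\|}{h}+\frac h2\sup_{|y|\ge|x|}\|D^2\phi(y)\|\le\frac{l(2|x|)^\alpha+l(|x|)^\alpha}{l(|x|)^\alpha}+\frac12\lesssim1
\]
for large \(|x|\). Here \(h\le c^\alpha(1+|x|)^\alpha\le|x|\) because \(\alpha<N\gamma\le1\), and \(l(2x)\lesssim l(x)\) by Lemma~\ref{lem:l-prop} and monotonicity of \(l\). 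Feeding \(\|D\phi\|\lesssim1\) into your induction, every Leibniz term of \(D\partial^f_X\phi\) has total \(l\)-exponent \(0\) and no logarithmic factor. Hence \(\|D\partial^f_X\phi\|_{\infty;I_r}\lesssim1\), which gives (b) for large \(r\). In particular, for \(N\ge2\), Assumption~\ref{as:growth2} does not actually permit an unbounded \(D\phi\).
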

\begin{proof}
  Equation~\eqref{eq:partial-bound3-a} follows directly from \eqref{eq:partial-bound3} 
  and Lemma~\ref{lem:l-prop} and it remains to show \eqref{eq:partial-bound3-b}. To this 
  end, we will first prove 
  \begin{equation}\label{eq:partial-bound3-c}
    \|\partial^f_X \phi(Y)\|_{\infty; I_r} \lesssim (\log l(r))^{\gamma \Lambda_1([f])} l(r)^\alpha
  \end{equation}
  for any \(f \in \CF_{N - 1}\). This is clearly true for \(f = \mathbf{1}\) and so, taking 
  \(f = \prod_{i = 1}^k [h_i]\) and assuming the relevant inductive hypothesis, it suffices 
  to show \eqref{eq:partial-bound3-c} for \(f\). For this, we consider the case where \(k = 1\) 
  and \(k > 1\) separately. For \(k = 1\), \(f = [h]\) and we have that
  \begin{align*}
    \|\partial^f_X \phi(Y)\|_{\infty; I_r} 
    & \lesssim \|D\phi(Y)\|_{\infty; I_r} \|\partial^h_X \phi(Y)\|_{\infty; I_r}\\
    & \lesssim \log(l(r))^{\gamma}(\log l(r))^{\gamma \Lambda_1([h])} l(r)^\alpha 
      = (\log l(r))^{\gamma \Lambda_1([f])} l(r)^\alpha. 
  \end{align*}
  On the other hand, for \(k > 1\), we have that
  \begin{align*}
    \|\partial^f_X \phi(Y)\|_{\infty; I_r} 
    & \lesssim \|D^k\phi(Y)\|_{\infty; I_r} \prod_{i = 1}^k \|\partial^{h_i}_X \phi(Y)\|_{\infty; I_r}\\
    & \lesssim l(r)^{(1 - k)\alpha} \prod_{i = 1}^k (\log l(r))^{\gamma \Lambda_1([h_i])} l(r)^\alpha
      = (\log l(r))^{\gamma \Lambda_1([f])} l(r)^\alpha
  \end{align*}
  as claimed.

  With this in mind, we may now prove \eqref{eq:partial-bound3-b}. Again, the statement is clearly true
  for \(f = \mathbf{1}\) and so, we take \(f = \prod_{i = 1}^k [h_i]\) and assume the relevant 
  inductive hypothesis. Then, we have by using \eqref{eq:partial-bound3-c} that
  \begin{align*}
    \|D\partial^f_X \phi(Y)\|_{\infty; I_r} 
    \lesssim\ & \|D^{k + 1}\phi(Y)\|_{\infty; I_r} \prod_{i = 1}^k\|\partial^{h_i}_X \phi(Y)\|_{\infty; I_r}\\
    & + \|D^k \phi(Y)\|_{\infty; I_r} \sum_{j = 1}^k \|D \partial^{h_j}_X \phi(Y)\|_{\infty; I_r} 
          \prod_{i \neq j} \|\partial^{h_i}_X \phi(Y)\|_{\infty; I_r}\\
    \lesssim\ & l(r)^{-k\alpha} \prod_{i = 1}^k (\log l(r))^{\gamma \Lambda_1([h_i])} l(r)^\alpha\\
    & + l(r)^{(1 - k)\alpha} \sum_{j = 1}^k (\log l(r))^{\gamma \Lambda_1([h_j])} 
          \prod_{i \neq j} (\log l(r))^{\gamma \Lambda_1([h_i])} l(r)^\alpha\\
    \sim\ & \log(l(r))^{\gamma \Lambda_1([f])}
  \end{align*}
  as desired.
\end{proof}

We remark that, in the above and the subsequent estimates, the constant within the inequality 
indicated by \(\lesssim\) depends on the choice of \(\delta\) but not on \(r\).

\begin{lemma}\label{lem:R-sharp-bd2}
  Under Assumption~\ref{as:growth2}, we have
  \[\epsilon^{\alpha}\|R^{\sharp, f}\|_{N\gamma; I} \lesssim
    \sum_{k = 0}^{N - |f| - 1}\epsilon^{k\alpha - \delta}\|R^{\mathbf{Y}, \mathbf{1}}\|_{N\gamma; I}^{k}\]
  for any \(\delta > 0\) and \(\epsilon \le l(r)^{-1}\).
\end{lemma}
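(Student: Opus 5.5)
We follow the proof of Lemma~\ref{lem:R-sharp-bd}, estimating each summand of $R^{\sharp, f}$ in \eqref{eq:R-sharp-def} separately. The setting is now simpler: there is no drift, so $R^{\mathbf{Y},\mathbf{1}} = R^{\mathbf{Z},\mathbf{1}}$ and no $B^\sharp$ terms appear. Moreover, the growth bounds of Corollary~\ref{cor:partial-bound} are replaced by those of Corollary~\ref{cor:partial-bound3}.

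Fix $f \in \CF_{N-1}$, $k \in \{1, \dots, N - |f| - 1\}$ and $m_1 + m_2 + m_3 = k$ with $m_1 \ne k$. As in \eqref{eq:R-sharp-bd1}, we bound
\[
\|D^k[\partial^f_X\phi(Y)]\|_{\infty;I}\,\|A^{m_1,m_2,m_3}\|_{N\gamma;I}.
\]
The factor $\|D^k \partial^f_X \phi(Y)\|_{\infty;I}$ is controlled by \eqref{eq:partial-bound3-a}, giving $l(r)^{(1-k)\alpha+\delta}$. This requires $k \le N - |f|$, which holds.

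For $A^{m_1,m_2,m_3}$, we use \eqref{eq:remain-Y-one} to write
\[
\delta Y_{st} = R^{\mathbf{Y},\mathbf{1}}_{st} + \sum_{\sigma} \partial^\sigma_X Y_s \langle \mathbf{X}_{st}, \sigma\rangle .
\]
Each factor is then bounded on $I$ (with $|t-s| \le \epsilon$) as follows.
\begin{itemize}
\item The factor $R^{\mathbf{Y},\mathbf{1}}$ contributes $\|R^{\mathbf{Y},\mathbf{1}}\|_{N\gamma;I}\,\epsilon^{N\gamma}$.
\item A factor $\partial^\sigma_X Y\,\langle\mathbf{X},\sigma\rangle$ contributes $l(r)^{\alpha+\delta}\epsilon^{|\sigma|\gamma}$, by \eqref{eq:partial-bound3-a} with $n = 0$ and $\sigma = [h]$.
\item The $m_3$ factors come with trees $|\tau| > N - k$, so they carry at least $\epsilon^{(N-k+1)\gamma}$ each.
\end{itemize}
To convert powers of $l(r)$ into powers of $\epsilon$, we use $\epsilon \le l(r)^{-1}$ together with $\alpha \ge (N-1)\gamma$. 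Concretely, each unit of $l(r)^{\alpha}$ paired with $\epsilon^{|\sigma|\gamma}$ is bounded by $\epsilon^{|\sigma|\gamma - \alpha}$. Since the exponent may then be negative, we must keep careful track of the total $\epsilon$-power. The extra $l(r)^\delta$ losses are absorbed into the $\epsilon^{-\delta}$ in the statement, after renaming $\delta$ since there are only finitely many factors.

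Collecting the exponents, a term with $j$ factors of $R^{\mathbf{Y},\mathbf{1}}$ (with $j = m_2$ or $j = m_1 + m_2$, depending on the expansion) should give
\[
\epsilon^{\alpha}\,\|D^k\partial^f_X\phi\|\,\|A\|_{N\gamma;I} \lesssim \epsilon^{j\alpha - \delta}\,\|R^{\mathbf{Y},\mathbf{1}}\|^{j}_{N\gamma;I}.
\]
Summing over $j \le N - |f| - 1$ then yields the claim. The $j = 0$ term must be bounded by $\epsilon^{-\delta}$ alone.

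\textbf{Main obstacle: exponent bookkeeping.} The $\epsilon$-exponent contributed by $j$ remainder factors and $k - j$ explicit factors, after dividing by $\epsilon^{N\gamma}$, is
\[
jN\gamma + \sum_i |\sigma_i|\gamma - N\gamma .
\]
The accompanying power of $l(r)$ is at most
\[
(1-k)\alpha + (k-j)\alpha + O(\delta) = (1-j)\alpha + O(\delta).
\]
Multiplying by $\epsilon^\alpha$ and using $l(r) \le \epsilon^{-1}$, the total becomes
\[
\epsilon^{\,j(N\gamma - \alpha) + \gamma\left(\sum_i |\sigma_i| - N\right) + j\alpha - O(\delta)}.
\]
The desired bound $\epsilon^{j\alpha - \delta}$ therefore needs the leftover exponent to be nonnegative. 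The first piece, $j(N\gamma - \alpha)$, is nonnegative since $\alpha < N\gamma$. The second piece, $\gamma(\sum_i|\sigma_i| - N)$, needs $\sum_i |\sigma_i| \ge N$, and this is exactly where the structure of $A$ matters. Since $m_1 \ne k$, either some $m_3 \ge 1$, in which case one tree has $|\tau| \ge N - k + 1$ and the remaining $k - 1$ trees each have size at least one, or some $m_2 \ge 1$, in which case $j \ge 1$. I expect the case with $m_2 \ge 1$ and only small trees to be the delicate one. There I would use the gap $N\gamma - \alpha > 0$ together with $\alpha \ge (N-1)\gamma$, and if necessary trade the surplus $j(N\gamma - \alpha)$ against the deficit. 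Should this fail in some configuration, the fallback is to keep $\delta$ small relative to $N\gamma - \alpha$, which is the same choice made in the proof of Theorem~\ref{thm:non-explosion2}.
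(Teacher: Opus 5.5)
Your overall route is the paper's: bound each summand of $R^{\sharp,f}$ using Corollary~\ref{cor:partial-bound3} and \eqref{eq:remain-Y-one}, then trade powers of $l(r)$ for powers of $\epsilon$. However, the exponent bookkeeping is incorrect, and that bookkeeping is the entire content of the lemma. The case you flag as delicate is also left open.

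With $j$ remainder factors and explicit trees $\sigma_1,\dots,\sigma_{k-j}$, the summand is bounded by
\[
\|R^{\mathbf{Y},\mathbf{1}}\|_{N\gamma;I}^{j}\,\epsilon^{\alpha+(j-1)N\gamma+\gamma\sum_i|\sigma_i|}\,l(r)^{(1-j)\alpha+O(\delta)} .
\]
There are two problems with how you handle this.
\begin{itemize}
\item Your displayed total exponent simplifies to $(jN-N+\sum_i|\sigma_i|)\gamma$, so it has lost a $j\alpha$.
\item For $j\ge 2$ the factor $l(r)^{(1-j)\alpha}$ has a negative exponent. The bound $l(r)\le\epsilon^{-1}$ cannot be applied to it at all, since the inequality reverses.
\end{itemize}
The result is a spurious requirement $\sum_i|\sigma_i|\ge N$ when $j\ge1$, and under your formula it genuinely fails. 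Take $k=1$, $m_2=1$, where there are no trees at all. The true bound is $\epsilon^{\alpha}l(r)^{\delta}\|R^{\mathbf{Y},\mathbf{1}}\|_{N\gamma;I}\le\epsilon^{\alpha-\delta}\|R^{\mathbf{Y},\mathbf{1}}\|_{N\gamma;I}$. Your formula gives only $\epsilon^{-O(\delta)}\|R^{\mathbf{Y},\mathbf{1}}\|_{N\gamma;I}$.

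This deficit of $\epsilon^{\alpha}$ cannot be repaired by trading the surplus $j(N\gamma-\alpha)\le j\gamma$, nor by shrinking $\delta$. So your fallback does not close the argument. Your count ``one tree of size $\ge N-k+1$ plus $k-1$ trees of size $\ge 1$'' is also valid only when $j=0$, since otherwise only $k-j$ slots carry trees.

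The missing idea is to split on $j$ rather than on $m_2$ versus $m_3$.
\begin{itemize}
\item If $j\ge1$, use $l\ge1$ (Assumption~\ref{as:growth2}) to discard $l(r)^{(1-j)\alpha}\le 1$. Then note that $\alpha+(j-1)N\gamma+\gamma\sum_i|\sigma_i| = j\alpha+(j-1)(N\gamma-\alpha)+\gamma\sum_i|\sigma_i|\ge j\alpha$. No condition on tree sizes is needed.
\item If $j=0$, then $m_2=0$, so $m_1\ne k$ forces $m_3\ge1$. Here $l(r)^{\alpha}\le\epsilon^{-\alpha}$ is legitimate. What one needs is exactly $\sum_i|\sigma_i|\ge m_1+m_3(N-k+1)\ge N$, i.e.\ $(m_3-1)(N-k)\ge 0$. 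When $m_3=k$ this reads $k(N-k+1)-N=(k-1)(N-k)\ge0$.
\end{itemize}
This is precisely the paper's case distinction: $m_1+m_2\ge1$ versus $m_1+m_2=0$ for the first term, and $m_2\ge1$ versus $m_2=0$ for the second. It shows that the genuinely delicate case is $j=0$, not $m_2\ge1$, and that it is closed by the tree-size count rather than by smallness of $\delta$.
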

\begin{proof}
  As in the proof of Lemma~\ref{lem:R-sharp-bd}, we estimate each summand in \(R^{\sharp, f}\) for 
  a given \(f\). In particular, fixing \(\delta' = \frac{\delta}{2N}\), 
  \(f \in \CF_{N - 1}, k \in \{1, \dots, N - |f| - 1\}\) and \(m_1 + m_2 + m_3 = k\) with \(m_1 \neq k\), 
  we have by Equation~\eqref{eq:remain-Y-one} and Corollary~\ref{cor:partial-bound3} 
  (where in the latter equation we replace \(\delta\) by \(\delta'\)) that
  \begin{equs}\label{eq:R-sharp-bd2}
      & \epsilon^{\alpha} \|D^k[\partial^f_X \phi(Y)]\|_{\infty; I} \|A^{m_1, m_2, m_3}\|_{N\gamma; I}\\[2ex]
      \lesssim\ 
      & 
      \epsilon^{\alpha} l(r)^{(1 - k)\alpha + \delta'}
      \|R^{\mathbf{Y}, \mathbf{1}}\|_{N\gamma; I}^{m_1 + m_2} \epsilon^{(m_1 + m_2 - 1)N\gamma}
      \sum_{|\tau| > N - k} 
      \|\partial^\tau_X Y\|_{\infty; I}^{m_3}
      \epsilon^{m_3|\tau|\gamma}\\
      & + 
      \epsilon^{\alpha} l(r)^{(1 - k)\alpha + \delta'}
      \|R^{\mathbf{Y}, \mathbf{1}}\|_{N\gamma; I}^{m_2} \epsilon^{(m_2 - 1)N\gamma}
        \sum_{\substack{|\tau| > N - k\\ \sigma \in \CT_{N - 1}^+}}
          \|\partial_X^\sigma Y\|^{m_1}_{\infty; I}\|\partial_X^\tau Y\|^{m_3}_{\infty; I}
          \epsilon^{(m_1|\sigma| + m_3|\tau|)\gamma}
  \end{equs}
  for which we estimate each of the two terms separately. Estimating the first term, we observe 
  \begin{equation}\label{eq:R-sharp-bd-term-a}
    \begin{split}
      & \epsilon^{\alpha} l(r)^{(1 - k)\alpha + \delta'}
        \|R^{\mathbf{Y}, \mathbf{1}}\|_{N\gamma; I}^{m_1 + m_2} \epsilon^{(m_1 + m_2 - 1)N\gamma}
        \sum_{|\tau| > N - k} 
        \|\partial^\tau_X Y\|_{\infty; I}^{m_3}
        \epsilon^{m_3|\tau|\gamma}\\
      \lesssim\
      & \|R^{\mathbf{Y}, \mathbf{1}}\|_{N\gamma; I}^{m_1 + m_2} 
        \epsilon^{\alpha + (m_1 + m_2 - 1)N \gamma + m_3(N - k + 1)\gamma}
        l(r)^{(1 - k)\alpha + \delta' + m_3(\alpha + \delta')}\\
      \le\ & \epsilon^{(m_1 + m_2)\alpha - N\delta'} \|R^{\mathbf{Y}, \mathbf{1}}\|_{N\gamma; I}^{m_1 + m_2} 
        \epsilon^{(m_1 + m_2 - 1)(N\gamma - \alpha) + m_3(N - k + 1)\gamma} l(r)^{(1 - k + m_3)\alpha}
    \end{split} 
  \end{equation}
  In the case where \(m_1 + m_2 \ge 1\) (namely \(1 - k + m_3 \le 0\)), 
  we observe that the term on the right hand side of \eqref{eq:R-sharp-bd-term-a} is bounded by
  \(\epsilon^{(m_1 + m_2)\alpha - N\delta'}\|R^{\mathbf{Y}, \mathbf{1}}\|_{N\gamma; I}^{m_1 + m_2}\).
  On the other hand, if \(m_1 + m_2 = 0\), i.e. \(m_3 = k\), the term becomes
  \[\epsilon^{-N\delta' - (N\gamma - \alpha) + k(N - k + 1)\gamma}l(r)^\alpha 
    \lesssim \epsilon^{(-N + k(N - k + 1))\gamma}\epsilon^{-N \delta'} =: \epsilon^{P_N(k)\gamma}\epsilon^{- N\delta'}.\]
  Now, by observing that \(P_N(k)\) is a quadratic polynomial with negative leading coefficient and 
  roots \(1\) and \(N\), it follows that \(P_N(k) \ge 0\) for all \(k \in \{1, \dots, N\}\). Thus, 
  \(\epsilon^{P_N(k) \gamma} \lesssim 1\) and we have
  \begin{align*}
    & \epsilon^{\alpha} l(r)^{(1 - k)\alpha + \delta'}
    \|R^{\mathbf{Y}, \mathbf{1}}\|_{N\gamma; I}^{m_1 + m_2} \epsilon^{(m_1 + m_2 - 1)N\gamma}
    \sum_{|\tau| > N - k} 
    \|\partial^\tau_X Y\|_{\infty; I}^{m_3}
    \epsilon^{m_3|\tau|\gamma}\\
    \lesssim\ &
    \epsilon^{(m_1 + m_2)\alpha - N\delta'}\|R^{\mathbf{Y}, \mathbf{1}}\|_{N\gamma; I}^{m_1 + m_2}.
  \end{align*}

  We now turn to the second term in \eqref{eq:R-sharp-bd2}. Observe that 
  \begin{align*}
    & \epsilon^{\alpha} l(r)^{(1 - k)\alpha + \delta'}
      \|R^{\mathbf{Y}, \mathbf{1}}\|_{N\gamma; I}^{m_2} \epsilon^{(m_2 - 1)N\gamma}
        \sum_{\substack{|\tau| > N - k\\ \sigma \in \CT_{N - 1}^+}}
          \|\partial_X^\sigma Y\|^{m_1}_{\infty; I}\|\partial_X^\tau Y\|^{m_3}_{\infty; I}
          \epsilon^{(m_1|\sigma| + m_3|\tau|)\gamma}\\
    \lesssim\
        & \|R^{\mathbf{Y},\mathbf{1}}\|_{N\gamma; I}^{m_2} 
        \epsilon^{\alpha + (m_2 - 1)N\gamma + (m_1 + m_3 (N - k + 1))\gamma}
        l(r)^{(1 - k)\alpha + \delta' + (m_1 + m_3)(\alpha + \delta')}\\
    \lesssim\ 
        & \epsilon^{m_2\alpha - 2N\delta'} \|R^{\mathbf{Y}, \mathbf{1}}\|_{N\gamma; I}^{m_2}
          \epsilon^{(m_2 - 1)(N\gamma - \alpha) + (m_1 + m_3(N - k + 1))\gamma}
          l(r)^{(1 - m_2)\alpha}
  \end{align*}
  where we used the fact that \(- k + m_1 + m_3 = - m_2\). Now, similar to before, we observe 
  that for \(m_2 \ge 1\), the right hand side of the above estimate is bounded by 
  \(\epsilon^{m_2\alpha - 2N\delta'} \|R^{\mathbf{Y}, \mathbf{1}}\|_{N\gamma; I}^{m_2}\).
  Now, if \(m_2 = 0\), the term becomes
  \[\epsilon^{-2N\delta'} \epsilon^{-(N\gamma - \alpha) + (m_1 + m_3(N - k + 1))\gamma} l(r)^\alpha
    \lesssim \epsilon^{-2N\delta'}\epsilon^{(-N + k + m_3(N - k))\gamma} \lesssim \epsilon^{-2N\delta'}.\]
  where the last inequality follows as \(m_3 \ge 1\) as we had assumed \(m_1 \neq k\).
  Thus, by substituting the above estimates into~\eqref{eq:R-sharp-bd2} and summing over 
  \(k, m_1, m_2, m_3\), we have 
  \[\epsilon^{\alpha}\|R^{\sharp, f}\|_{N\gamma; I} 
    \lesssim \sum_{k = 0}^{N - |f| - 1}\epsilon^{k\alpha - 2N\delta'}\|R^{\mathbf{Y}, \mathbf{1}}\|_{N\gamma; I}^{k}
    = \sum_{k = 0}^{N - |f| - 1}\epsilon^{k\alpha - \delta}\|R^{\mathbf{Y}, \mathbf{1}}\|_{N\gamma; I}^{k}\]
  as claimed.
\end{proof}

\begin{lemma}\label{lem:h-poly-bdd2}
  For any \(\delta \le \frac{\gamma}{N}\), we denote
  \(h_\delta(\epsilon) = \epsilon^{\alpha + \delta} \|R^{\mathbf{Y}, \mathbf{1}}\|_{N\gamma; I}\).
  Then, assuming Assumption~\ref{as:growth2} we have that 
  \begin{equation}\label{eq:h-poly-bdd2}
    h_\delta(\epsilon) \lesssim 1 + \epsilon^{\delta}\sum_{k = 1}^N h_\delta(\epsilon)^k
  \end{equation}
  for all \(\epsilon \le l(r)^{-1}\).
\end{lemma}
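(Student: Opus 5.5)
The plan mirrors the proof of Lemma~\ref{lem:h-poly-bdd}, with Corollary~\ref{cor:partial-bound3} and Lemma~\ref{lem:R-sharp-bd2} in place of Corollary~\ref{cor:partial-bound} and Lemma~\ref{lem:R-sharp-bd}. There is no drift in \eqref{eq:rde-strat}, so \(B^\sharp \equiv 0\) and \(R^{\mathbf{Y},\mathbf{1}} = R^{\mathbf{Z},\mathbf{1}}\). Lemma~\ref{lem:remain-Z-one} therefore applies directly to \(R^{\mathbf{Y},\mathbf{1}}\) without the \(B^\sharp\) terms. We multiply that estimate on \(I\) (with \(|I| \le \epsilon \le l(r)^{-1}\)) by \(\epsilon^{\alpha+\delta}\), which produces \(h_\delta(\epsilon)\) on the left. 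On the right we obtain four groups of terms, analogous to \eqref{eq:h-est}, and we estimate each by powers of \(\epsilon\) and \(l(r)\). Throughout we use \(l(2r) \lesssim l(r)\) (Lemma~\ref{lem:l-prop}), so that all derivative bounds on \(I \subseteq I_r\) are expressed in terms of \(l(r)\), together with \(l(r) \le \epsilon^{-1}\).

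\emph{Step 1: the \(R^{\sharp,f}\) term.} This term is \(\epsilon^{\alpha+\delta}\epsilon^{(|f|+1)\gamma}\|R^{\sharp,f}\|_{N\gamma;I}\). By Lemma~\ref{lem:R-sharp-bd2}, applied with a smaller parameter \(\delta''\) in place of \(\delta\), it is bounded by
\[\sum_{k=0}^{N-|f|-1}\epsilon^{\delta - \delta'' + (|f|+1)\gamma}\,\epsilon^{k\alpha}\|R^{\mathbf{Y},\mathbf{1}}\|^k_{N\gamma;I}.\]
Since \(\epsilon^{k\alpha}\|R\|^k = \epsilon^{-k\delta}h_\delta^k\), and \((|f|+1)\gamma \ge \gamma \ge N\delta \ge k\delta\) by the hypothesis \(\delta \le \gamma/N\), this is \(\lesssim 1 + \epsilon^{\delta}\sum_k h_\delta^k\) provided \(\delta''\) is chosen small. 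The \(k=0\) term gives the constant \(1\).

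\emph{Step 2: the Taylor terms with \(C_I^f\).} By \eqref{eq:partial-bound3-a}, \(C_I^f \lesssim l(r)^{(1-N+|f|)\alpha+\delta'}\) for any small \(\delta' > 0\). Consider first the term \(C_I^f\|R\|^{N-|f|}\epsilon^{(N-1)(N-|f|)\gamma+\gamma}\epsilon^{\alpha+\delta}\). Writing \(\|R\|^{N-|f|} = h_\delta^{N-|f|}\epsilon^{-(N-|f|)(\alpha+\delta)}\), we need the total exponent of \(\epsilon\) to be at least \(\delta\) after absorbing \(l(r)\) via \(l(r) \le \epsilon^{-1}\). This holds because \(\alpha < N\gamma\), so each factor \(\epsilon^{(N-1)\gamma - \alpha}\) loses at most \(\gamma\), while the gain \(\epsilon^{\gamma}\) and the choice \(\alpha \ge (N-1)\gamma\) balance the bookkeeping. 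Next, the terms with \(\|\partial^\tau_X Y\|_{\infty;I}^{N-|f|} \lesssim l(r)^{(\alpha+\delta')(N-|f|)}\) are bounded by a pure power \(\epsilon^{\ge 0}\), and hence contribute to the constant \(1\). The exponent check is that \(((|\tau|-1)\gamma - \alpha)(N-|f|) + \gamma + \alpha + \delta + (N-1-|f|)\alpha \ge 0\), which uses \(\alpha < N\gamma\) only through \(|\tau| \ge 1\).

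\emph{Step 3: the top-order term.} For \(|f| = N-1\), \eqref{eq:partial-bound3-a} gives \(\|\partial^f_X\phi\|_\infty \lesssim l(r)^{\alpha+\delta'} \le \epsilon^{-\alpha-\delta'}\). Multiplying by \(\epsilon^{\alpha+\delta}\) yields a bounded contribution once \(\delta' \le \delta\).

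The main obstacle is the exponent bookkeeping in Step 2. In particular, one must verify that the negative powers \(\epsilon^{-(N-|f|)\delta}\) coming from \(h_\delta\), and those coming from the \(\delta'\) losses, are dominated by the available surplus \(\epsilon^{\gamma}\). This surplus is exactly why the hypothesis \(\delta \le \gamma/N\) appears. If the surplus turns out to be insufficient for some \(|f|\), I would first try \(\alpha \ge (N-1)\gamma\), which makes \(l(r)^{(1-N+|f|)\alpha}\) decay fast enough to absorb the deficit.
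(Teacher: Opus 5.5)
You follow the paper's proof exactly: with no drift, $R^{\mathbf{Y},\mathbf{1}}=R^{\mathbf{Z},\mathbf{1}}$; you multiply Lemma~\ref{lem:remain-Z-one} by $\epsilon^{\alpha+\delta}$ and treat the four groups with Lemma~\ref{lem:R-sharp-bd2} and Corollary~\ref{cor:partial-bound3}. Step~1 is fine; your auxiliary $\delta''$ is not needed, since with $\delta$ itself the exponent is $(|f|+1)\gamma-k\delta\ge\delta$ for $k\ge1$. Step~3 is fine. The $\partial^\tau_X Y$ part of Step~2 is also fine, because there the \emph{net} power of $l(r)$ is $\alpha+(m+1)\delta'>0$, so absorbing it through $l(r)\le\epsilon^{-1}$ is legitimate. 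The step you have not actually carried out is the term $C_I^f\|R^{\mathbf{Y},\mathbf{1}}\|_{N\gamma;I}^{m}\epsilon^{(N-1)m\gamma+\alpha+\gamma+\delta}$ with $m=N-|f|$. You flag it yourself, and the mechanism you propose for it runs in the wrong direction.

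For $m\ge2$, the power $(1-m)\alpha+\delta'$ of $l(r)$ in $C_I^f$ is negative. All you know is $\epsilon\le l(r)^{-1}$, and the inequality must hold for every such $\epsilon$, since it is fed into Lemma~\ref{lem:unif-bound} through the extension $\tilde h$. Hence a negative power of $l(r)$ can only be bounded by $1$; it can never be traded for a positive power of $\epsilon$. Your fallback of letting $l(r)^{(1-N+|f|)\alpha}$ ``absorb the deficit'' is therefore unavailable.

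Your heuristic is also not enough as stated. It counts $m$ factors $\epsilon^{(N-1)\gamma-\alpha}$, each costing up to $\gamma$, together with $\alpha\ge(N-1)\gamma$. That only gives an exponent $>(N-m)\gamma+\delta-m\delta$, which falls below $\delta$ at $m=N$. The missing observation is that one $\epsilon^{-\alpha}$ cancels against the prefactor $\epsilon^{\alpha}$. With $C_I^f\lesssim 1$ (choose $\delta'\le\alpha$),
\[
C_I^f\|R^{\mathbf{Y},\mathbf{1}}\|_{N\gamma;I}^{m}\epsilon^{(N-1)m\gamma+\alpha+\gamma+\delta}\lesssim h_\delta(\epsilon)^m\epsilon^{E_m},\qquad E_m=(m-1)\bigl((N-1)\gamma-\alpha\bigr)+N\gamma+\delta-m\delta .
\]
Since $\alpha<N\gamma$, this gives $E_m>(N-m+1)\gamma-m\delta+\delta\ge(\gamma-N\delta)+\delta\ge\delta$. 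For $m=1$, the bound $C_I^f\lesssim l(r)^{\delta'}\le\epsilon^{-\delta'}$ gives the exponent $N\gamma-\delta'\ge\delta$. So the bound $\epsilon^\delta h_\delta(\epsilon)^m$ does hold, and the key inputs are $\alpha<N\gamma$ and $N\delta\le\gamma$.

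Note that the paper's own display for this term makes exactly the substitution $l(r)^{(1-(N-|f|))\alpha+\delta}\le\epsilon^{-((1-(N-|f|))\alpha+\delta)}$, which is unjustified for $|f|<N-1$. Its intermediate bound $\epsilon^{(N-1)(N-|f|)\gamma}h^{N-|f|}$ is therefore not justified when $\epsilon\ll l(r)^{-1}$, although its final bound is correct by the count above.
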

\begin{proof}
  For simplicity, we in this proof omit the subscript from \(h_\delta(\epsilon)\). We have by 
  Lemma~\ref{lem:remain-Z-one} and~\ref{lem:R-sharp-bd2}
  that
  \begin{equation}\label{eq:h-est2}
    \begin{split}
      h(\epsilon) \lesssim\ 
      & \sum_{f \in \CF_{N - 1}} \epsilon^{(1 + |f|)\gamma}\sum_{k = 0}^{N - |f| - 1} 
        \epsilon^{k\alpha} \|R^{\mathbf{Y}, \mathbf{1}}\|_{N\gamma; I}^k\\
      & + \sum_{f \in \CF_{N - 1}} C_I^f \|R^{\mathbf{Y}, \mathbf{1}}\|_{N\gamma; I}^{N - |f|} 
            \epsilon^{(N - 1)(N - |f|)\gamma} \epsilon^{\alpha + \gamma + \delta}\\
      & + \sum_{f \in \CF_{N - 1}} C_I^f \sum_{\tau \in \CT^+_{N - 1}} \|\partial^\tau_X Y\|_{\infty; I}^{(N - |f|)}
            \epsilon^{(N - |f|)(|\tau| - 1)\gamma}\epsilon^{\alpha + \gamma + \delta}\\
      & + \sum_{|f| = N - 1} \|\partial^f_X \phi(\mathbf{Y})\|_{\infty; I} \epsilon^{\alpha + \delta}.
    \end{split}
  \end{equation}
  Straightaway, the summand of the first term can be estimated as
  \begin{align*}
    \epsilon^{(1 + |f|)\gamma}\sum_{k = 0}^{N - |f| - 1} 
        \epsilon^{k\alpha} \|R^{\mathbf{Y}, \mathbf{1}}\|_{N\gamma; I}^k
    & \lesssim \sum_{k = 0}^{N - |f| - 1} h(\epsilon)^{k} \epsilon^{(1 + |f|)\gamma - k\delta}
      \lesssim \epsilon^{\delta}\sum_{k = 0}^{N - |f| - 1} h(\epsilon)^{k}.
  \end{align*}
  On the other hand, since by Corollary~\ref{cor:partial-bound3}, 
  \(C^f_I \lesssim l(r)^{(1 - (N - |f|))\alpha + \delta}\) for each \(f \in \CF_{N - 1}\), we have 
  \begin{align*}
    & C_I^f \|R^{\mathbf{Y}, \mathbf{1}}\|_{N\gamma; I}^{N - |f|}
        \epsilon^{(N - 1)(N - |f|)\gamma} \epsilon^{\alpha + \gamma + \delta}\\
    \lesssim\ & h(\epsilon)^{N - |f|} \epsilon^{-(N - |f|)(\alpha + \delta) 
      - ((1 - (N - |f|))\alpha + \delta) + (N - 1)(N - |f|)\gamma + \alpha + \gamma + \delta}\\
    \lesssim\ & \epsilon^{(N - 1)(N - |f|)\gamma} h(\epsilon)^{N - |f|} \lesssim \epsilon^\delta h(\epsilon)^{N - |f|}
  \end{align*}
  Moreover, for any \(\tau = [h] \in \CT^+_{N - 1}\), by using Corollary~\ref{cor:partial-bound3} to 
  obtain the estimate \(\|\partial^\tau_X Y\|_{\infty; I} = \|\partial^h_X \phi(\mathbf{Y})\|_{\infty; I} 
  \lesssim l(r)^{\alpha + \delta}\), we obtain
  \begin{align*}
    & C_I^f \|\partial^\tau_X Y\|_{\infty; I}^{(N - |f|)} \epsilon^{(N - |f|)(|\tau| - 1)\gamma}\epsilon^{\alpha + \gamma + \delta}\\
    \lesssim\ & \epsilon^{-((1 - (N - |f|))\alpha + \delta) - (N - |f|)(\alpha + \delta) + 
      (N - |f|)(|\tau| - 1)\gamma + \alpha + \gamma + \delta} \lesssim 1.
  \end{align*}
  Thus, as \(\|\partial^f_X \phi(\mathbf{Y})\|_{\infty; I} \epsilon^{\alpha + \delta} \lesssim 1\) by 
  Corollary~\ref{cor:partial-bound3}, by substituting these estimates into Equation~\eqref{eq:h-est2}, we 
  conclude
  \[h(\epsilon) \lesssim 1 + \epsilon^\delta \sum_{k = 1}^N h(\epsilon)^k\]
  as claimed.
\end{proof}
\section{Case of bounded derivatives}
We conclude this article with a brief discussion on the difficulty in extending the argument of 
\cite{Lejay:12} (cf. also \cite{Caravenna:25}) to allow for linear growth in the coefficients of branched RDEs.
In the case where \(N = 2\), \cite{Lejay:12} provides a non-explosion principle for the
RDEs~\eqref{eq:rde-strat} in the case where the coefficient \(\phi\) is such that 
\[\|D\phi\|_\infty + \|D \partial^{\tree<X'>}\phi\|_\infty 
  = \|D\phi\|_\infty + \|D (D\phi \cdot \phi)\|_\infty < \infty.\]
We observe that the condition \(\|D \partial^{\tree<X'>}\phi\|_\infty < \infty\) suggests that 
\(D^2\phi\) decays at the same rate as the growth of \(\phi\). This is consistent with our observation 
in Section~\ref{sec:beyond}. With this result in mind, it is therefore natural to conjecture that for 
general \(N\), the bRDE~\eqref{eq:rde-strat} does not explode if
\[\sup_{f \in \CF_{N - 1}} \|D \partial^f_X \phi\|_\infty < \infty.\]
To obtain this result, the strategy of \cite{Lejay:12} is to define an Euler scheme 
\((Y^n_t)_{t \in 2^{-n}[2^n T]}\) for the RDE~\eqref{eq:rde-strat} for which one can show pre-compactness 
by using the Arzelà-Ascoli theorem by controlling 
\(\|R^n\|_{3\gamma} \lesssim \|\delta R^n\|_{3\gamma}\) with 
\[R^n_{st} := R^{Y^n, \mathbf{1}}_{st} = \delta(Y^n)_{st} - \phi(Y^n_s) \<\mathbf{X}_{st}, \tree<X'>\> 
- \partial^{\tree<X'>}_X \phi(Y^n_s) \<\mathbf{X}_{st}, \tree<1'>\>\]
where the inequality is due to a discrete sewing lemma (e.g. \cite[Theorem 1.18]{Caravenna:25}).
By observing the identity 
\[(\delta R^n)_{sut} = \sum_f (\delta(\partial^f_X Y^n)_{su} - R^{\phi(Y^n), f}_{su}) \<\mathbf{X}_{ut}, f\>,\]
it remains to estimate \(\|\delta(\partial^f_X Y^n) - R^{\phi(Y^n), f}\|_{(3 - |f|)\gamma}\). 
The only non-trivial case is \(f = \mathbf{1}\), which one can easily control via the 
identity
\begin{equation}\label{eq:remainder-identity}
  \begin{split}
    \int_0^1 D\phi(Y^\theta_{st}) \dd \theta R^{Y, \mathbf{1}}_{st} 
    =\ & \overbrace{\delta \phi(Y)_{st} - \sum_{\tau \in \CT^*_{N - 1}} \partial^\tau_X \phi(Y_s) \<\mathbf{X}_{st}, \tau\>}^{= R^{\phi(Y), \mathbf{1}}_{st}}\\
    & - \sum_{\tau \in \CT^*_{N - 1}} \int_0^1 (\partial^\tau_X \phi(Y^\theta_{st}) - \partial^\tau_X \phi(Y_s)) 
      \<\mathbf{X}_{st}, \tau\> \dd \theta\\
    & + \sum_{f \in \CF_{N - 2}} \int_0^1 D\phi(Y^\theta_{st})(\partial^f_X \phi(Y^\theta_{st}) - \partial^f_X \phi(Y_s))
      \<\mathbf{X}_{st}, [f]\> \dd \theta
  \end{split}
\end{equation}
where we denoted \(Y^\theta_{st} = (1 - \theta) Y_s + \theta Y_t\) for \(\theta \in [0, 1]\). Thus, 
assuming that \(\sup_f \|D\partial^f_X \phi\|_\infty < \infty\), \(\|R^n\|_{3\gamma}\) is controlled 
linearly by \(\|Y^n\|_\gamma\) for sufficiently small \(T\) uniformly in \(n\). On the other hand, 
it is easy to see that \(\|Y^n\|_\gamma\) is controlled linearly by \(\|R^n\|_{3\gamma}\) and thus,
one obtains a uniform estimate on \(\|R^n\|_{3\gamma}\) as desired. 

In the general case, the above two identities remain to hold and one can similarly define an Euler scheme 
\(Y^n\) for the bRDE~\eqref{eq:rde-strat} for which, by the discrete sewing lemma, one attempts to control 
\(\|\delta R^n\|_{(N + 1)\gamma}\). However, in applying \eqref{eq:remainder-identity}, one quickly 
realizes that higher level remainder terms introduce quadratic (or higher order) dependence on 
\(\|Y^n\|_\gamma\). To be more precise, while for \(N = 2\), \(\CF_{N - 1} = \CT_{N - 1}\) and 
\(\delta Y_{st} - R^{\phi(Y), \mathbf{1}}_{st}\) can be controlled by \eqref{eq:remainder-identity},
for \(N > 2\), the summand corresponding to \(f \in \CF_{N - 1} \setminus \CT_{N - 1}\) with 
\(\# f = 2\) can instead only be estimated via the second order Taylor remainder 
\(\int_0^1 \theta \int_0^1 D^2\phi(Y^{\theta\mu}_{st})[R^{Y, \mathbf{1}} \otimes R^{Y, \mathbf{1}}] \dd \mu \dd \theta\).
This introduces a quadratic dependence on \(\|Y^n\|_\gamma\) and thus, the previous argument in obtaining 
an a priori estimate fails.

\endappendix

\newcommand{\etalchar}[1]{$^{#1}$}

\end{document}